\newtheorem{theorem}{Theorem}[section]
\newtheorem{lemma}[theorem]{Lemma}
\newtheorem{proposition}[theorem]{Proposition}
\newtheorem{remark}[theorem]{Remark}
\newtheorem{definition}[theorem]{Definition}
   \newcommand{\R}{\mathbb{R}}
   \newcommand{\spn}[1]{\langle#1\rangle}
\begin{document}

\numberwithin{equation}{section}

\title{Soliton Solutions to the Curve Shortening Flow on the 2-dimensional hyperbolic plane}

\author{Fábio Nunes da Silva\footnote{Universidade de Bras\'ilia, Department of Mathematics, 70910-900, Bras\'ilia-DF, Brazil, fabionuness@ufob.edu.br} \qquad Keti Tenenblat\footnote{ Universidade de Bras\'{\i}lia,
 Department of Mathematics,
 70910-900, Bras\'{\i}lia-DF, Brazil, K.Tenenblat@mat.unb.br Partially supported by CNPq Proc. 312462/2014-0, Ministry of Science and Technology, Brazil  and FAPDF/Brazil grant 0193.001346/2016.}
  }
\date{}

\maketitle{}

\begin{abstract}
We show that a curve is a soliton solution to the curve shortening flow if and only if its geodesic curvature can be written as the inner product between its tangent vector field and a fixed vector of the 3-dimensional Minkowski space. We use this characterization to provide a qualitative study of the solitons. We show that for each fixed vector there is a 2-parameter family of soliton solution to the curve shortening flow on the 2-dimensional hyperbolic space. Moreover, we prove that each soliton is defined on the entire real line, it is embedded and its geodesic curvature converges to a constant at each end. \newline
\newline
\vspace{0.2cm} 
\noindent \emph{Keywords}:  Curve shortening flow; solitons solutions. \newline
\newline
\end{abstract}

\section{Introduction}
A family  of curves  $\hat{X}^{t}:I \longrightarrow M$, $t \in [0,T)$, 
on a 2-dimensional Riemannian manifold $M^{2}$  is  said to be a solution 
to the {\it Curve Shortening Flow}  (CSF) with initial condition 
 $\hat{X}^{0}(\cdot)=X(\cdot)$, if it satisfies the following equation
\begin{equation}\label{i1}
\left\{\begin{array}{ll}
\displaystyle{\frac{\partial }{\partial t}\hat{X}^{t}(\cdot)}=\hat{k}^{t}(\cdot)\hat{N}^{t}(\cdot)\\
\hat{X}^{0}(\cdot)=X(\cdot),
\end{array}\right.
\end{equation}
where $\hat{k}^{t}(\cdot)$ 
is the geodesic curvature and 
 $\hat{N}^{t}(\cdot)$ is the unit vector field normal to
 $\hat{X}^{t}(\cdot)$ for each $t \in [0,T)$.

 Epstein and Gage \cite{Epstein} showed that  when  $M^{2}=\R^{2}$, the CSF is 
 geometrically the same if tangential components are added to the right hand side of  
the differential equation  \eqref{i1}.  Therefore, one can define that a 1-parameter 
family of curves
 $\hat{X}^{t}:I \rightarrow \R^{2}$, $t \in [0,T)$ 
is a solution to the CSF in  $\R^{2}$  with initial condition $\hat{X}^{0}(\cdot)=X(\cdot)$, if it satisfies  
\begin{equation}
\left\spn{\displaystyle\frac{\partial}{\partial t}\hat{X}^{t}(\cdot),\hat{N}^{t}(\cdot)\right}=\hat{k}^{t}(\cdot),
\end{equation}
where $\spn{\cdot,\cdot}$ is the cannonical inner product on  $\R^{2}$, $\hat{k}^{t}(\cdot)$ 
is the curvature and $\hat{N}^{t}(\cdot)$ is the unit vector field normal to 
 $\hat{X}^{t}(\cdot)$ for each $t \in [0,T)$.  The name curve shortening flow is justified by the fact that when the curves of the family 
  $\hat{X}^{t}$ are closed, then the length of the curves decreases along the flow, i.e.,  
 it is a gradient type of flow for the arc length functional. 
  Grayson \cite{Grayson1} observed that the CSF is also known as the {\it curvature flow} or  
  the {\it heat flow for isometric immersions}.  
  
According to Epstein and  Gage \cite{Epstein}, 
the original motivation for studying  equation (\ref{i1}) was to find a new and maybe more natural proof of the existence of closed geodesics on Riemannian manifolds. 
However, the first results in this direction were obtained by  
Grayson \cite{Grayson} em 1989. But equation (\ref{i1}) 
for the Euclidean plane was investigated earlier by several authors 
 \cite{Abresch}, \cite{Epstein1}, \cite{Gage}, \cite{Gage1}, \cite{Gage2} e \cite{Grayson1}.

An important class of solutions to the CSF  are those that evolve by isometries or homotheties. Such solutions are called {\it self-similar solutions } and {\it solitons }
if they evolve just by isometries.  On the Euclidean plane, the straight lines are not affected by the flow and they are considered to be trivial solutions. 
Circles evolve homothetically to a point in  finite time. The {\it 
Grim Reaper} curve given by  the graph of the function $f(s)=ln(cos(s))$ evolves by a flow of  translations. Giga \cite{Giga} proved that this is the unique curve on the plane that evolves   by translations.  An example of a plane curve that evolves by isometries of the plane is 
the {\it yin-yang} spiral.  Abresch-Langer \cite{Abresch} and Epstein-Weinstein \cite{Epstein1} investigated the closed curves, not necessarily simple, that evolve by homotheties. Halldorsson \cite{Halldorsson} concluded the description of 
all self-similar solutions on the plane.

Some authors studied  classes of solutions to the CSF on the plane and proved that 
after a certain time the flow evolves into a  self-similar one.  First, Gage \cite{Gage} \cite{Gage1} showed that closed convex curves 
on $\R^{2}$ evolve into circular curves after a certain time.  Then 
Gage and  Hamilton \cite{Gage2} showed that convex closed curves collapse into a point. 
Grayson \cite{Grayson1} proved  that closed embedded curves evolve to circular curves and then they collapse into a point at a finite time. 
Moreover,  Angenent \cite{Angenent}, under  more general conditions, proved  that the CSF 
evolves in a sense into a self-similar flow, showing the importance of self-similar solutions.  
 
One should point out that, when the ambient space is not the Euclidean plane, there are very few results on self-similar solutions to the CSF.   
In 2015, Halldorsson \cite{Halldorsson1} classified all the self-similar solutions on the Minkowski plane. Dos Reis and Tenenblat \cite{Dos Reis} caracterized  and described all the soliton solutions on the sphere $\mathbb{S}^{2}$. Some results on the CSF for Riemannian manifolds different from the plane can be found  in \cite{Gage2}, \cite{Grayson}, \cite{Ma} and \cite{Zhou}, among others.  Moreover, Angenent, \cite{Angenent1} studied the topology of the closed geodesics on compact surfaces by using the CSF. 

In this paper, we will study the soliton solutions of curve shortening flow on the 2-dimensional hyperbolic space $\mathbb{H}^{2}\subset \R_{1}^{3} $, where $\R_{1}^{3}$ is the 3-dimensional Minkowski space.  

\section{Main Results}

% Characterization and qualitative behavior of the Soliton Solutions to the Curve Shortening % Flow}

We consider the 3-dimensional Minkowki space as $\mathbb{R}_1^{3}=(\mathbb{R}^{3}, \langle, \rangle )$, where $\mathbb{R}^3$ is the 3-dimensional vector space and $\langle, \rangle$ is the Minkowski metric defined by

$$\langle u,v \rangle=-u_1v_1+u_2v_2+u_3v_3.$$

Let $\displaystyle X:I\subset \mathbb{R} \rightarrow \mathbb{H}^{2}\subset \mathbb{R}_1^{3}$ be a regular curve parametrized by arc length $s$. We denote by $T(s)=X^{\prime}(s)$ the tangent vector field, $N(s)=X(s) \times T(s)$ the unit normal vector field and $k(s)=\langle T^{\prime}(s), N(s)\rangle $ the geodesic curvature of $X$. A one parameter family of curves $\displaystyle \hat{X}: I\times J \rightarrow \mathbb{H}^{2}$ is called a \it curve shortening flow \rm (CSF) with initial condition $X$, if 
\begin{equation}\label{sysCSF}
\left\{ \begin{array}{ll}
\displaystyle \left\langle \frac{\partial}{\partial t}\hat{X}(s,t), \hat{N}(s,t)\right\rangle=\hat{k}(s,t),\\
\hat{X}(s,0)=X(s),
\end{array} \right.
\end{equation}
where $\hat{k}^{t}(\cdot)=\hat{k}(\cdot,t)$ is the geodesic curvature and $\hat{N}^{t}(\cdot)=\hat{N}(\cdot,t)$ is the unit normal vector field of $\hat{X}^{t}(\cdot)=\hat{X}(\cdot,t)$. When $X$ is a geodesic i.e. $k=0$, then the family $\hat{X}^{t}(s)=X(s)$ gives a trivial solution to the CSF. Our goal is to study the case when $\hat{X}^{t}(s)$ evolves  by a  1-parameter family of isometries  of $\mathbb{H}^{2}$.

\begin{definition}\rm
Let	$\displaystyle \hat{X}: I\times J \rightarrow \mathbb{H}^{2}\subset \mathbb{R}_1^{3}$ be a solution to the curve shortening flow \eqref{sysCSF} on $\mathbb{H}^{2}$, with initial condition $\displaystyle X: I \rightarrow \mathbb{H}^{2}$.  We say that $X$ is a \it soliton solution to the curve shortening flow \rm if there is a 1-parameter family of isometries $M(t):\mathbb{H}^{2}\rightarrow \mathbb{H}^{2}$ such that $M(0)=Id$ and $$\hat{X}^{t}(s)=M(t)X(s)$$ for all $t \in J$, where $Id$ is the identity map.
\end{definition} 
We remark that an isometry of $\mathbb{H}^{2}$ is an element of the Lie group $O_1(3)=\{M \in GL(3,\mathbb{R}):M^{T}\epsilon M=\epsilon\}$ that  preserves $\mathbb{H}^{2}$, where $M^{T}$ is the transpose of $M$ and 
$$
\epsilon=\left( \begin{array}{lll}
-1 & 0 & 0\\
0 & 1&0\\
0& 0 & 1
\end{array}\right).
$$

\begin{theorem} \label{c2t1}
	Let $\displaystyle X: I \rightarrow \mathbb{H}^{2}$ be a regular curve parametrized by arc length. Then $X(s)$, $s\in I$, is a soliton solution to the curve shortening flow if, and only if, there is a vector $v \in \mathbb{R}^{3}_1\setminus \{0\}$ such that
	\begin{equation}\label{eq2}
	\langle T(s),v\rangle=k(s),
	\end{equation}
	where $T(s)$ is the unit tangent vector field and $k(s)$ is the geodesic curvature  of $X$.
\end{theorem}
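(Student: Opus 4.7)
My plan is to exploit the correspondence between 1-parameter subgroups of $O_1(3)$ and vectors in $\mathbb{R}_1^3$. The Lie algebra $\mathfrak{o}_1(3)=\{A:A^T\epsilon+\epsilon A=0\}$ is 3-dimensional, and a short computation shows that each $A\in\mathfrak{o}_1(3)$ can be written uniquely as $Au=w\times u$ for some $w\in\mathbb{R}_1^3$, where $\times$ denotes the Minkowski cross product characterized by $\langle u\times v,z\rangle=\det(u,v,z)$. Two further ingredients are the Lagrange-type identity $\langle a\times b,c\times d\rangle=-\langle a,c\rangle\langle b,d\rangle+\langle a,d\rangle\langle b,c\rangle$ and the compatibility $M(u\times v)=Mu\times Mv$ for $M\in SO_1^+(3)$, both checked by direct coordinate calculation.

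For the ``only if'' direction, I differentiate $\hat{X}^t(s)=M(t)X(s)$ at $t=0$ and set $A=M'(0)\in\mathfrak{o}_1(3)$, so that $\partial_t\hat{X}^t(s)|_{t=0}=AX(s)=w\times X(s)$ for a unique $w\in\mathbb{R}_1^3$. Substituting into \eqref{sysCSF} at $t=0$ produces $\langle w\times X,X\times T\rangle=k$, and the Lagrange identity combined with $\langle X,X\rangle=-1$ and $\langle X,T\rangle=0$ collapses this to $-\langle w,T\rangle=k$. Setting $v=-w$ yields \eqref{eq2}.

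For the converse, starting from $v\in\mathbb{R}_1^3\setminus\{0\}$ satisfying $\langle T,v\rangle=k$, I define $A$ by $Au=-v\times u$ (so $A\in\mathfrak{o}_1(3)$), let $M(t)=\exp(tA)\in O_1^+(3)$, and set $\hat{X}^t(s)=M(t)X(s)$. Because $M(t)$ is an isometry, $\hat{X}^t$ remains arc-length parametrized with $\hat{T}^t=M(t)T$, $\hat{N}^t=M(t)N$, and $\hat{k}^t=k$. The crucial observation is that $Av=-v\times v=0$, so $M(t)v=v$ for every $t$. Using this invariance together with the Lagrange identity,
\begin{equation*}
\langle\partial_t\hat{X}^t,\hat{N}^t\rangle=\langle-v\times\hat{X}^t,\hat{X}^t\times\hat{T}^t\rangle=\langle v,\hat{T}^t\rangle=\langle M(t)^{-1}v,T\rangle=\langle v,T\rangle=k=\hat{k}^t,
\end{equation*}
so \eqref{sysCSF} is satisfied for every $t$. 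The main obstacle is exactly this promotion from an initial-time algebraic identity to a CSF equation valid for all $t$: the relation \eqref{eq2} is only given on the initial curve, and if $v$ were not fixed by the flow the right-hand side $\langle v,\hat{T}^t\rangle$ would drift in $t$. It is precisely the fact that $v\in\ker A$, an automatic consequence of the choice $Au=-v\times u$, that makes the construction work.
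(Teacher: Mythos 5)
Your proof is correct, and while it rests on the same underlying idea as the paper's --- identifying the infinitesimal generator $M'(0)\in\mathfrak{o}_1(3)$ with a vector of $\mathbb{R}^3_1$ and reducing the soliton condition to $\langle M'(0)X,N\rangle=\langle T,v\rangle$ --- your execution is genuinely different in both directions. The paper proves the forward implication by expanding $M'(0)$ in an explicit basis $\{A_1,A_2,A_3\}$ of $\mathfrak{o}_1(3)$ and reading off $v=(c_1+c_2,c_2,-c_3)$ from a coordinate computation; you obtain the same conclusion invariantly from the isomorphism $w\mapsto w\times(\cdot)$ together with the Lagrange-type identity. For the converse, the paper first normalizes $v$ up to isometry into one of three types (timelike, lightlike, spacelike) and verifies the flow equation case by case with the three explicit one-parameter subgroups in \eqref{1.1}; you avoid the trichotomy entirely by exponentiating $A=-v\times(\cdot)$ and observing that $v\in\ker A$ forces $M(t)v=v$, which is precisely what makes $\langle v,\hat{T}^t\rangle=\langle v,T\rangle$ persist in $t$ --- the same mechanism the paper exploits implicitly through $M_i(t)w_i=w_i$. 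What the paper's route buys is the explicit normal forms, which it reuses in the rest of the article; what yours buys is uniformity and no case analysis. One shared caveat: if the isometry family is trivial then $M'(0)=0$ and your $v=-w$ vanishes, so the curve is a geodesic and one must separately pick a nonzero $v$ with $\langle T,v\rangle=0$, exactly as the paper's remark following the theorem does.
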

We observe that when $X$ is a geodesic of $\mathbb{H}^{2}\subset  \mathbb{R}^{3}_1$,  
then it is a planar curve and hence there exists a vector $v \in \mathbb{R}^{3}_1\setminus\{0\}$ such that $\displaystyle \langle T,v\rangle=0.$

The following theorem describes the qualitative behaviour of the soliton solutions to the CSF in $\mathbb{H}^{2}.$
\begin{theorem}\label{c2t2}
	For any $v \in \mathbb{R}^{3}_1\setminus\{0\}$, there is a 2-parameter family of non-trivial soliton solutions to the  curve shortening flow  on the 2-dimensional hyperbolic space. Each soliton solution is an embedded curve $ X(s)$ on $\mathbb{H}^{2}$, defined for all $s\in \mathbb{R}$. Moreover, at each end, the curvature function $k(s)$ tends to one of the following constants $\{-1,0,1\}$.
\end{theorem}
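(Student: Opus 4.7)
The strategy is to leverage Theorem \ref{c2t1} and recast the soliton problem as an autonomous first-order ODE system on $\mathbb{R}^3$. Along a unit-speed soliton $X:I \to \mathbb{H}^2$, introduce the scalar functions
\[
f(s) = \langle X(s), v\rangle, \qquad g(s) = \langle T(s), v\rangle, \qquad h(s) = \langle N(s), v\rangle,
\]
so that the soliton condition reads $g = k$. Combining this with the Frenet-type formulas on $\mathbb{H}^2 \subset \mathbb{R}^3_1$, namely $X' = T$, $T' = X + kN$, $N' = -kT$, yields
\[
f' = g, \qquad g' = f + gh, \qquad h' = -g^2,
\]
and expanding $v$ in the Minkowski-orthonormal frame $\{X,T,N\}$ (with $\langle X,X\rangle = -1$) gives the first integral $-f^2 + g^2 + h^2 = \langle v,v\rangle$. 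The 2-parameter family then arises because the conservation surface in $\mathbb{R}^3$ is 2-dimensional and every point of it determines, via the Frenet equations, a unique soliton up to an isometry of $\mathbb{H}^2$ fixing $v$. Equivalently, the unit tangent bundle $S\mathbb{H}^2$ is 3-dimensional and the stabilizer of $v$ in $O_1(3)$ is 1-dimensional, giving $3-1=2$ parameters.

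Global existence on $\mathbb{R}$ follows from the observation that $h' = -g^2 \leq 0$, so $h$ is monotone non-increasing; the first integral then controls $|f|$ and $|g|$ in terms of $|h|$, precluding finite-time blow-up. For the asymptotic behaviour, since $h$ is monotone it admits a (possibly infinite) limit at each end. If $h(s)\to h_\infty \in \mathbb{R}$, then $\int g^2\,ds < \infty$, and inspection of $g' = f + gh$ together with the first integral forces $g \to 0$. If instead $h \to -\infty$, the first integral implies $f^2 \sim h^2$, and requiring $g$ to remain bounded (so that $g' = f + gh$ does not diverge) forces the dominant balance $f \sim -gh$ with $|f| \sim |h|$, yielding $g \to \pm 1$. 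Hence $k(s)$ must converge to an element of $\{-1,0,1\}$ at each end. An analogous argument handles $s \to -\infty$.

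I expect embeddedness to be the main obstacle, since the ODE analysis alone does not obviously rule out self-intersections. My plan is to exploit the strict monotonicity of $h$ (on the complement of the isolated zeros of $g$) together with a model-dependent geometric argument treating the three cases $\langle v,v\rangle < 0$, $= 0$, $> 0$ separately: for $v$ timelike one uses rotational (geodesic-polar) coordinates centred at the fixed point of the 1-parameter isometry group generated by $v$; for $v$ lightlike one uses horocyclic coordinates; for $v$ spacelike one uses equidistant (Fermi) coordinates along the invariant geodesic. In each case the level sets of $h$ foliate $\mathbb{H}^2$ by orbits of the isometry group, and the asymptotic identification of $k$ with the curvature of a geodesic, horocycle, or equidistant curve (values $0$, $\pm 1$) should let one realize $X$ as a graph over such a transversal foliation, thereby establishing injectivity.
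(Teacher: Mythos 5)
Your reduction to the autonomous system $f'=g$, $g'=f+gh$, $h'=-g^{2}$ with the first integral $-f^{2}+g^{2}+h^{2}=\langle v,v\rangle$ is exactly the paper's starting point (its system \eqref{s1}, with $a$ normalized into $v$), and the $2$-parameter count via the conservation surface matches the paper's. From that point on, however, there are two genuine gaps. First, both your global-existence argument and your asymptotic analysis rest on the boundedness of $g=k$, which you never prove: the first integral reads $f^{2}=g^{2}+h^{2}-\langle v,v\rangle$, so it controls $f$ in terms of $g$ and $h$ (and $g$ in terms of $f$ and $h$), not ``$|f|$ and $|g|$ in terms of $|h|$'' as you claim; monotonicity of $h$ plus the first integral does not by itself rule out $g$ blowing up in finite time, and in the case $h\to-\infty$ the relation $f^{2}\sim h^{2}$ requires $g^{2}/h^{2}\to 0$, i.e.\ precisely the boundedness you are assuming. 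In the paper this is the hardest analytic content: Lemmas \ref{l3}, \ref{l7} and \ref{ld6} prove that $\tau$ ($=g$) is bounded by an integral comparison (if $2|\tau|<a\tau^{2}$ eventually, then $2\alpha$ is dominated by $-\eta$ plus constants, contradicting the established growth of $\alpha$), and only then does the L'Hospital computation $\lim(-\eta/\alpha)=\lim a\tau$ in Lemma \ref{coro1} deliver the limits $\pm1$. Your ``dominant balance'' step is a heuristic for that computation, not a proof. (Once $g$ is bounded, global existence follows as in the paper from $|f(s)-f(s_{0})|\le M|s-s_{0}|$; a Gronwall bound on $f^{2}+g^{2}+h^{2}$ would also work, but that is not the argument you wrote.)

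Second, embeddedness is left as a plan, and the plan as stated does not close. The curve is a graph over a transversal to the orbit foliation of the one-parameter isometry group generated by $v$ only where $f'=g=k\neq 0$; since $k$ changes sign exactly where the height function has its (unique) critical point, the curve splits into two graphical arcs and you must still exclude intersections between the two arcs, which the foliation argument alone does not do. The paper handles this in one stroke: the height function $\alpha$ is strictly monotone off at most one critical point (so each arc is embedded), and a putative self-intersection enclosing a simple region $\Sigma$ is excluded by Gauss--Bonnet, since $0<2\pi\chi(\Sigma)-\theta=\int_{\Sigma}(-1)\,d\sigma+a\left[\alpha(s_{2})-\alpha(s_{1})\right]=-\mathrm{Area}(\Sigma)<0$, the boundary term vanishing because $\int k\,ds=a\int\alpha'\,ds$ telescopes at the intersection point $X(s_{1})=X(s_{2})$. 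You would need to supply an argument of comparable strength; without it the embeddedness claim is unproven.
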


\section{Proofs of the main results}
In this section we prove our main results.\newline \newline
\bf Proof of Theorem \ref{c2t1}. \rm Suppose that $X(s)$ is parametrized by arc length $s$. If $X$ is a soliton solution to the CSF, then $\displaystyle \hat{X}^{t}(s)=M(t)X(s)$ is solution to \eqref{sysCSF}, where $M(t)$ is a family of isometries of $\mathbb{H}^{2}$.Taking the derivative of $\displaystyle \hat{X}(s,t)$ at $t$, we have

$$\frac{\partial}{\partial t}\hat{X}(s,t)=M^{\prime}(t)X(s).$$ It follows from definition of the CSF that
\begin{equation*}
\hat{k}(s,t)=\left\langle\frac{\partial}{\partial t}\hat{X}(s,t),\hat{N}(s,t)\right\rangle=\left\langle M^{\prime}(t)X(s),M(t)N(s)\right\rangle.
\end{equation*}
In particular, for $t=0$, we have 
\begin{equation*}
k(s)=\langle M^{\prime}(0)X(s),N(s)\rangle.
\end{equation*}
$M^{\prime}(0)$ is an element of the Lie algebra $\mathfrak{o}_1(3)$ of the Lie group $O_1(3)$. Let ${A_1,A_2,A_3}$ be a basis of $\mathfrak{o}_1(3)$, where
	\begin{eqnarray*}
		A_1=\left(\begin{array}{ccc}
			0 & 0 & 0  \\
			0 & 0 &  1\\
			0 & -1 &  0
		\end{array}\right),\,\,\,A_2= \left(\begin{array}{ccc}
			0 & 0 & 1  \\
			0 & 0 &  1\\
			1 & -1 &  0
		\end{array}\right)\,\,\,\, \text{e} \,\,\,\,\, 
		A_3=\left(\begin{array}{ccc}
			0 & 1 & 0  \\
			1 & 0 &  0\\
			0 & 0 &  0
		\end{array}\right).
	\end{eqnarray*}
	Then $M^{\prime}(0)=c_{1} A_1+c_{2}A_2+c_{3}A_3$, for real numbers $c_i$.
	
	% ???  fazer a computation
	
	By  simple computations, we can prove that $\displaystyle \spn{M^{\prime}(0)X(s),N(s)}=\spn{T(s),v}$, where $X(s)\times N(s)=-T(s)$ and $v=(c_1+c_2,c_2,-c_3)$. Therefore,
	\begin{equation*}
	k(s)=\spn{T(s),v}.
	\end{equation*}
	
	Conversely, let $X(s)$ be a curve in $\mathbb{H}^{2}\subset \mathbb{R}^{3}_1$ parametrized by arc length $s$, such that $\langle T(s),v\rangle=k(s)$ for a vector $v \in \mathbb{R}^{3}_1\setminus \{0\}$. Without loss of generality, up to isometries of $\mathbb{H}^{2}$, we can consider $v$ to be a multiple of $w_1=(1,0,0)$ if $v$ is a timelike vector, a multiple of $w_2=(1,1,0)$ if $v$ is a lightlike vector and a multiple of $w_3=(0,0,-1)$ if $v$ is a spacelike vector. Thus, depending on the type of the vector $v$,  we can write the curvature as  $k_i(s)=\langle T(s),v_i\rangle$ where $v_i=aw_i$, $a>0$ and $i=1,2,3$. 
	
	Now, we define the evolution of $X$ in $\mathbb{H}^{2}$ to be  $\hat{X}_{i}(s,t)=M_{i}(t)X(s)$, where 	
	\begin{eqnarray} 
	M_{1}(t)&:=&\left(\begin{array}{lll}
	1 & 0                    &                   0  \\
	0 & cos(\varphi_{1}(t)) &  sen(\varphi_{1}(t))\\
	0 & -sen(\varphi_{1}(t)) &  cos(\varphi_{1}(t))
	\end{array}\right),\,\,\,\nonumber
	M_{2}(t):=\left(\begin{array}{ccc}
	1+\frac{(\varphi_{2}(t))^{2}}{2} & -\frac{(\varphi_{2}(t))^{2}}{2}  &   \varphi_{2}(t)  \\
	\frac{(\varphi_{2}(t))^{2}}{2} & 1-\frac{(\varphi_{2}(t))^{2}}{2}  &   \varphi_{2}(t)  \\
	\varphi_{2}(t)                    & -\varphi_{2}(t)                     &   1 
	\end{array}\right),\label{1.1}\\
	M_{3}(t)&:=&\left(\begin{array}{lll}
	cosh(\varphi_{3}(t)) & senh(\varphi_{3}(t))  &   0  \\
	senh(\varphi_{3}(t)) & cosh(\varphi_{3}(t))  &   0  \\
	0                    & 0                     &   1 
	\end{array}\right)\nonumber,
	\end{eqnarray}
	and $\varphi_{i}(t)=at$ for each $i=1,2,3.$

% para que \varphi(t) já que sao todas = at
	
	A straightforward computation shows that 
	$$\displaystyle \left\langle M^{\prime}_{i}(t)X(s),M_{i}(t)N(s)\right\rangle=-\varphi^{\prime}_i(t)\langle X(s)\times N(s),w_{i}\rangle =-\varphi^{\prime}_i(t)\langle-T(s),w_{i}\rangle.$$
	Thus, 
	
	\begin{eqnarray*}
		\left\langle \frac{\partial}{\partial t}\hat{X}_{i}(s,t),\hat{N}_{i}(s,t)\right\rangle&=&\left\langle M^{\prime}_{i}(t)X(s),M_{i}(t)N(s)\right\rangle\\
		&=&-\varphi^{\prime}_i(t)\langle-T(s),w_{i}\rangle\;=\;
		% \\&=&
		\langle T(s),v_{i}\rangle\\
		&=&k_{i}(s)\; =\; 
		% \\&=&
		\hat{k}_{i}(s,t),
	\end{eqnarray*}
	where the last equality follows from the fact that isometries preserve geodesic curvature. 
	Therefore,  $X$ is a soliton solution to the CSF. 
	
	\mbox{} \hfill  $\square$ 
	
It follows from Theorem \ref{c2t1} that the study of the solitons solutions to the CSF on the 2-dimensional hyperbolic space is reduced to describing the curves that satisfy Equation \eqref{eq2} for some vector $v \in \mathbb{R}^{3}_1\setminus \{0\}$. Up to isometries of $\mathbb{H}^{2}$ we consider $v$ as being $v_{i}=ae_{i}$, where $a\in \R^{+}$, $e_1=(-1,0,0)$ if $v$ is a timelike vector,  $e_1=(-1,1,0)$ if $v$ is a lightlike vector and $e_1=(0,0,1)$ if $v$ is a spacelike vector. Our next result characterizes \eqref{eq2} in terms of a system of differential equations.

\begin{proposition}\label{p1}
	Let $\displaystyle X: I \rightarrow \mathbb{H}^{2}$ be a regular curve parametrized by arc length $s$. Consider the vectors 
	 \begin{equation}\label{e}
	e_1=(-1,0,0),\qquad e_2=(-1,1,0) \qquad  e_3=(0,0,1).	
	\end{equation}
	For each $i\in \{1,2,3\}$, define the functions 
	$$
	\alpha_{i}(s)=\langle X(s),e_{i}\rangle, \qquad \tau_{i}(s)=\langle T(s),e_{i}\rangle
	  \qquad \eta_{i}(s)=\langle N(s),e_{i}\rangle, 
	  $$ where $T$ and $N$ are the unit vector fields tangent and normal  to $X$, respectively. For a fixed $a>0$,  
	  $$k_{i}(s)=a\tau_{i}(s)$$
	is satisfied,   for all $ s\in I$ if, and only if, the functions $\alpha_{i}(s)$, $\tau_{i}(s)$ and $\eta_{i}(s)$ satisfy the system 
	\begin{equation}\label{eq3}
	\left\{\begin{array}{lll}
	\alpha^{\prime}_{i}(s)=\tau_{i}(s),\\
	\tau^{\prime}_{i}(s)=a\tau_{i}(s)\eta_{i}(s)+\alpha_{i}(s),\\
	\eta^{\prime}_{i}(s)=-a\tau^{2}_{i}(s),
	\end{array}\right. 
	\end{equation}
	with initial condition $(\alpha_{i}(0),\tau_{i}(0),\eta_{i}(0))$ satisfying 
	\begin{equation} \label{eq4}
	-\alpha^{2}_{i}(0)+\tau^{2}_{i}(0)+\eta^{2}_{i}(0)=
	\left\{\begin{array}{cll}
	-1, &\text{if} & i=1,\\
	0,  & \text{if} & i=2,\\
	1,  & \text{if} &  i=3.
	\end{array}\right.
	\end{equation}

For such functions, the expression $-\alpha^{2}_{i}(s)+\tau^{2}_{i}(s)+\eta^{2}_{i}(s)$ is equal to the right hand side  of \eqref{eq4}, for all $s \in I$. Moreover, $\eta_{i}(s)$ is a decreasing function.
\end{proposition}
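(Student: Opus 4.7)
The plan is to translate the geometric condition $k_i(s)=a\tau_i(s)$ into the ODE system via the Frenet formulas for a curve in $\mathbb{H}^{2}\subset\mathbb{R}^{3}_{1}$. First I would derive these formulas: because $\langle X,X\rangle=-1$, $\langle T,T\rangle=1$, and $N$ satisfies $\langle N,N\rangle=1$, $\langle N,X\rangle=\langle N,T\rangle=0$, with $k=\langle T',N\rangle$, differentiating each orthogonality relation and decomposing $T'$ and $N'$ in the orthogonal frame $\{X,T,N\}$ (which has signature $(-,+,+)$ under $\langle,\rangle$) gives
\begin{equation*}
X'=T,\qquad T'=X+kN,\qquad N'=-kT.
\end{equation*}
Pairing each of these with $e_i$ in the Minkowski inner product produces
\begin{equation*}
\alpha_i'=\tau_i,\qquad \tau_i'=\alpha_i+k\eta_i,\qquad \eta_i'=-k\tau_i,
\end{equation*}
with no hypothesis on $k$.

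For the forward implication, substituting $k=a\tau_i$ into these Frenet-derived identities produces system \eqref{eq3} verbatim. For the converse, I would compare the two versions of $\tau_i'$ and $\eta_i'$: assuming \eqref{eq3}, one obtains $(k-a\tau_i)\eta_i=0$ and $(k-a\tau_i)\tau_i=0$. Since $\{X,T,N\}$ is an orthogonal basis of $\mathbb{R}^{3}_{1}$, any vector $v$ decomposes as $v=-\langle v,X\rangle X+\langle v,T\rangle T+\langle v,N\rangle N$; applied to $e_i$ this reads
\begin{equation*}
e_i=-\alpha_i X+\tau_i T+\eta_i N.
\end{equation*}
Thus simultaneous vanishing of $\tau_i$ and $\eta_i$ on an open interval would force $e_i$ to be parallel to the timelike vector $X(s)$ there. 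This is impossible for $e_2$ lightlike and $e_3$ spacelike; for $e_1$ timelike it can occur only at the isolated value $X(s)=(1,0,0)$. Hence $k=a\tau_i$ on a dense set and, by continuity, everywhere.

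For the identity on $-\alpha_i^{2}+\tau_i^{2}+\eta_i^{2}$, the same decomposition combined with $\langle X,X\rangle=-1$, $\langle T,T\rangle=\langle N,N\rangle=1$ yields $\langle e_i,e_i\rangle=-\alpha_i^{2}+\tau_i^{2}+\eta_i^{2}$, which equals $-1,0,1$ respectively and thus gives \eqref{eq4} at every $s$ (in particular at $s=0$). As a check, differentiating $-\alpha_i^{2}+\tau_i^{2}+\eta_i^{2}$ along system \eqref{eq3} produces termwise cancellation, so this quantity is already a first integral of the ODE. Monotonicity is immediate: $\eta_i'=-a\tau_i^{2}\le 0$ with $a>0$ makes $\eta_i$ non-increasing.

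The main obstacle is the converse direction, where one must rule out simultaneous vanishing of $\tau_i$ and $\eta_i$; this is where the causal type of $e_i$ genuinely enters and a purely algebraic manipulation of \eqref{eq3} is not quite enough. Everything else reduces to direct computation from the Frenet formulas and the orthonormal expansion of $e_i$ in the frame $\{X,T,N\}$.
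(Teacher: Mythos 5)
Your proposal is correct and follows essentially the same route as the paper: derive the Frenet system $X'=T$, $T'=X+kN$, $N'=-kT$, pair with $e_i$, substitute $k=a\tau_i$ for one direction, and for the converse compare the two expressions for $\tau_i'$ and $\eta_i'$ to get $(k-a\tau_i)\eta_i=(k-a\tau_i)\tau_i=0$ and rule out $\tau_i=\eta_i=0$ on an interval via the expansion of $e_i$ in the frame $\{X,T,N\}$, with the first integral and monotonicity handled identically. Your causal-type discussion of why $e_i$ cannot be parallel to $X(s)$ on an interval is slightly more explicit than the paper's, and you also fix a harmless sign in the expansion $e_i=-\alpha_i X+\tau_i T+\eta_i N$, but these are refinements, not a different argument.
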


\begin{proof}
	The vector fields  $X$, $T$ and $N$ satisfy the following system of equations 	
	\begin{equation} \label{eq5a}
	\left\{\begin{array}{lll}
	X^{\prime}(s)=T(s),\\
	T^{\prime}(s)=k(s)N(s)+X(s),\\
	N^{\prime}(s)=-k(s)T(s).
	\end{array}\right.
	\end{equation}
	Taking the inner product with $e_i$,  we get that $\alpha_{i}(s)$, $\tau_{i}(s)$ and $\eta_{i}(s)$ satisfy the system of equations
	
	\begin{equation}
	\left\{\begin{array}{lll}\label{eq5}
	\alpha^{\prime}_{i}(s)=\tau_{i}(s),\\
	\tau^{\prime}_{i}(s)=k_{i}(s)\eta_{i}(s)+\alpha_{i}(s),\\
	\eta^{\prime}_{i}(s)=-k_{i}(s)\tau_{i}(s),
	\end{array}\right.
	\end{equation}
	
	Suppose that $k_{i}(s)=a\tau_{i}(s)$ for all $s \in I$. Then substituting into  \eqref{eq5}, we obtain \eqref{eq3}. Note that, 
	\begin{equation*}
	e_i=\alpha_i(s)X(s)+\tau_i(s)T(s)+\eta_iN(s).
	\end{equation*}
	Therefore, $\langle e_i,e_i\rangle= -\alpha^{2}_{i}(s)+\tau^{2}_{i}(s)+\eta^{2}_{i}(s)$ is constant for all $s \in I$. In particular, for $s=0$, we obtain \eqref{eq5}. Moreover, it follows from the third equation of the system  \eqref{eq3} that the function $\eta_{i}(s)$ is  decreasing.  	
	
	Conversely, suppose that the functions $\alpha_{i}(s)$, $\tau_{i}(s)$ and $\eta_{i}(s)$ satisfy \eqref{eq3} and \eqref{eq4} for each $i \in \{1,2,3\}$. Since \eqref{eq5} holds, we have
	\begin{eqnarray*}
		\left\{\begin{array}{l}
			a\tau_{i}(s)\eta_{i}(s)+\alpha_{i}(s)=k_{i}(s)\eta_{i}(s)+\alpha_{i}(s),\\
			-(a\tau_{i}(s))\tau_{i}(s)=-k_{i}(s)\tau_{i}(s),
		\end{array}	\right.
	\end{eqnarray*}
	i.e.,
	\begin{eqnarray*}
		\left\{\begin{array}{l}
			\left[a\tau_{i}(s)-k_{i}(s)\right]\eta_{i}(s)=0,\\
			\left[a\tau_{i}(s)-k_{i}(s)\right]\tau_{i}(s)=0,
		\end{array}	\right.
	\end{eqnarray*}
	for all $s \in I$. For eaxch $i$, in order to conclude that $k_i(s)=a\tau_i(s)$, for all $s$, we will assume that $k_i(s)\neq a\tau_i(s)$  at some point $s_0$. Then this will occur on some interval $J\subset I$ around $s_0$. Hence  $\eta_{i}(s)=\tau_{i}(s)=0 $ for $s\in J$. Therefore, $e_i$ will be orthogonal to $T(s)$ and $N(s)$  for all $s \in J$. Thus, $e_i$ will be parallel to $X(s)$ for all $s \in J$. But $e_i$ is a constant vector for each $i$, so this can only happen at some isolated points of a curve $X$ in $\mathbb{H}^{2}$, which is a contradiction. Therefore, $k_i(s)=a\tau_i(s)$ for all $s\in I$ and for each $i \in \{1,2,3\}$.
\end{proof}

Our next proposition shows  how a  solution of the system \eqref{eq3}, with initial conditions satisfying \eqref{eq4},  is related to a soliton solution to the CSF.

\begin{proposition}\label{p2}
	Given a solution $(\alpha(s),\tau(s),\eta(s))$ to the system \eqref{eq3} on some interval $J$ with fixed $a>0$ and initial conditions $(\alpha(0),\tau(0), \eta(0))$ satisfying $-\alpha^{2}(0)+\tau^{2}(0)+\eta^{2}(0)=-1$ (resp. $0$ and $1$), there exists a smooth curve $\displaystyle X: I \rightarrow \mathbb{H}^{2}$ parametrized by arc length $s$, such that its tangent and normal unit vector fields $T$ and $N$ satisfy
	\begin{equation}\label{eq6}
	\alpha(s)=\langle X(s),e\rangle,\hspace{.5 cm}\tau(s)=\langle T(s),e\rangle \hspace{.5 cm} \text{and} \hspace{.5 cm} \eta(s)=\langle N(s),e\rangle, 
	\end{equation} 
	where $e=(-1,0,0)$ (resp. $e=(-1,1,0)$ and $e=(0,0,1)$).
	
\end{proposition}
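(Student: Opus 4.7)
The plan is to invert the correspondence of Proposition \ref{p1}: given the solution $(\alpha,\tau,\eta)$, set $k(s):=a\tau(s)$ and construct a curve in $\mathbb{H}^{2}$ with this geodesic curvature whose initial Frenet frame reproduces the prescribed scalars. The argument splits into three stages: build the initial frame, integrate the Frenet-type system \eqref{eq5a}, and identify the resulting scalar functions with $(\alpha,\tau,\eta)$.

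For the first stage, I will choose a future-pointing $X_0\in\mathbb{H}^{2}$, a unit spacelike vector $T_0$ tangent to $\mathbb{H}^{2}$ at $X_0$, and $N_0:=X_0\times T_0$, such that
\[
\langle X_0,e\rangle=\alpha(0),\qquad \langle T_0,e\rangle=\tau(0),\qquad \langle N_0,e\rangle=\eta(0).
\]
Since $\{X_0,T_0,N_0\}$ is Lorentz-orthonormal, expanding $e$ in this basis gives $\langle e,e\rangle=-\alpha(0)^2+\tau(0)^2+\eta(0)^2$, which by hypothesis \eqref{eq4} matches the Minkowski norm of $e$ in each of the three causal types. The existence of such a frame therefore follows from the transitive action of $O_1(3)$ on pseudo-spheres of fixed norm and causal character.

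Next, I will solve the linear Frenet-type system \eqref{eq5a} with $k(s)=a\tau(s)$ and initial data $(X_0,T_0,N_0)$; standard ODE theory yields a smooth solution $(X(s),T(s),N(s))$ on an interval $I$ containing $0$. A short computation shows that the six Gram functions $\langle X,X\rangle+1$, $\langle T,T\rangle-1$, $\langle N,N\rangle-1$, $\langle X,T\rangle$, $\langle X,N\rangle$, $\langle T,N\rangle$ satisfy a linear homogeneous ODE with zero initial data, so the Lorentzian orthonormality is preserved along the flow. This ensures $X(s)\in\mathbb{H}^{2}$, that $X$ is parametrized by arc length, and that $N=X\times T$ is its unit normal.

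Finally, define $\tilde\alpha(s):=\langle X(s),e\rangle$, $\tilde\tau(s):=\langle T(s),e\rangle$, $\tilde\eta(s):=\langle N(s),e\rangle$. Taking the inner product of \eqref{eq5a} with $e$ and using $k(s)=a\tau(s)$ shows that $(\tilde\alpha,\tilde\tau,\tilde\eta)$ satisfies the \emph{linear} system
\[
\tilde\alpha'=\tilde\tau,\qquad \tilde\tau'=a\tau(s)\tilde\eta+\tilde\alpha,\qquad \tilde\eta'=-a\tau(s)\tilde\tau,
\]
with initial data $(\alpha(0),\tau(0),\eta(0))$ by construction. The given triple $(\alpha,\tau,\eta)$ is also a solution of this same linear system (rewrite \eqref{eq3} as $\tau'=a\tau(s)\eta+\alpha$ and $\eta'=-a\tau(s)\tau$), with the same initial data. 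Uniqueness for linear ODEs then yields $\tilde\alpha=\alpha$, $\tilde\tau=\tau$, $\tilde\eta=\eta$, which is \eqref{eq6}. The step I expect to be the most delicate is the first one: the purely algebraic identity \eqref{eq4} must be promoted to the existence of a concrete Lorentz-orthonormal frame realizing the prescribed inner products with $e$, and in practice this is checked case-by-case for the three possibilities for the causal type of $e$ (timelike, lightlike, spacelike).
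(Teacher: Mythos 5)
Your proposal is correct and follows essentially the same route as the paper: define $k(s)=a\tau(s)$, integrate the Frenet-type system \eqref{eq5a} from an initial Lorentz-orthonormal frame chosen (via \eqref{eq4} and the transitivity of the isometry group) so that $e=-\alpha(0)X(0)+\tau(0)T(0)+\eta(0)N(0)$, and then propagate the identity along the curve. The only cosmetic difference is in the last step: the paper shows directly that $\frac{d}{ds}\bigl[-\alpha(s)X(s)+\tau(s)T(s)+\eta(s)N(s)\bigr]=0$, whereas you compare $(\langle X,e\rangle,\langle T,e\rangle,\langle N,e\rangle)$ with $(\alpha,\tau,\eta)$ as two solutions of the same linear non-autonomous system with coefficient $a\tau(s)$ and invoke ODE uniqueness — the two arguments are equivalent, and your write-up is in fact more explicit about the existence of the initial frame and the preservation of the Gram matrix.
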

\begin{proof}
	Define $k(s)=a\tau(s)$. Thus, up to isometries of $\mathbb{H}^{2}$, there exists an unique curve $\displaystyle X: I \rightarrow \mathbb{H}^{2}$, whose curvature is $k(s)$ i.e. $X(s)$ and its tangent and normal unit vector fields $T(s)$ and $N(s)$ satisfy the system \eqref{eq5a}. The curve $X(s)$ is uniquely determined by the initial conditions $X(0)$, $T(0)$ and $N(0)$, that can be chosen such that
	$$-\alpha(0)X(0)+\tau(0)T(0)+\eta(0)N(0)=e,$$
	where $e=(-1,0,0)$ (resp. $e=(-1,1,0)$ and $e=(0,0,1)$). A straightforward computations shows that \eqref{eq4} and \eqref{eq5a} imply
	
	\begin{eqnarray*}
		\frac{d}{ds}\left[-\alpha(s)X(s)+\tau(s)T(s)+\eta(s)N(s)\right]=0.
	\end{eqnarray*}
	
	Therefore, \eqref{eq6} is satisfied.	
\end{proof}

\begin{remark}\label{ob1} \rm
	Let $\displaystyle X: I \rightarrow \mathbb{H}^{2}$ be a regular curve parametrized by arc length $s$ given by $X(s)=(x_1(s),x_2(s), x_3(s))$. The function $\alpha(s)$ defined by \eqref{eq6} has the following geometric interpretation.
	\begin{itemize}
		\item If $e=(-1,0,0)$ (timelike vector), then $\alpha(s)=x_1(s)>0$ for all $s \in I$. Moreover, $\alpha(s)$ is the height function with respect to the vector $(1,0,0).$
		\item If $e=(-1,1,0)$ (lightlike vector), then $\alpha(s)=x_1(s)+x_2(s)>0$ for all $s \in I$. Moreover, $\alpha(s)$ is the height function with respect to the vector $(1,1,0).$
		\item If $e=(0,0,1)$ (spacelike vector), then $\alpha(s)=x_3(s)$ for all $s \in I$. Moreover, $\alpha(s)$ is the height function (with sign) with respect to the vector $(0,0,1).$
	\end{itemize}

Figure \ref{H} (resp. \ref{C} and \ref{S}) provides a geometric illustration of the function $\alpha(s)$ as a height function with respect to vector $(-1,0,0)$ (resp. $e=(-1,1,0)$ and $e=(0,0,1)$.)
\end{remark}
\begin{figure}[!htb]
	\centering
	\subfloat[]{
		\includegraphics[height=2 cm]{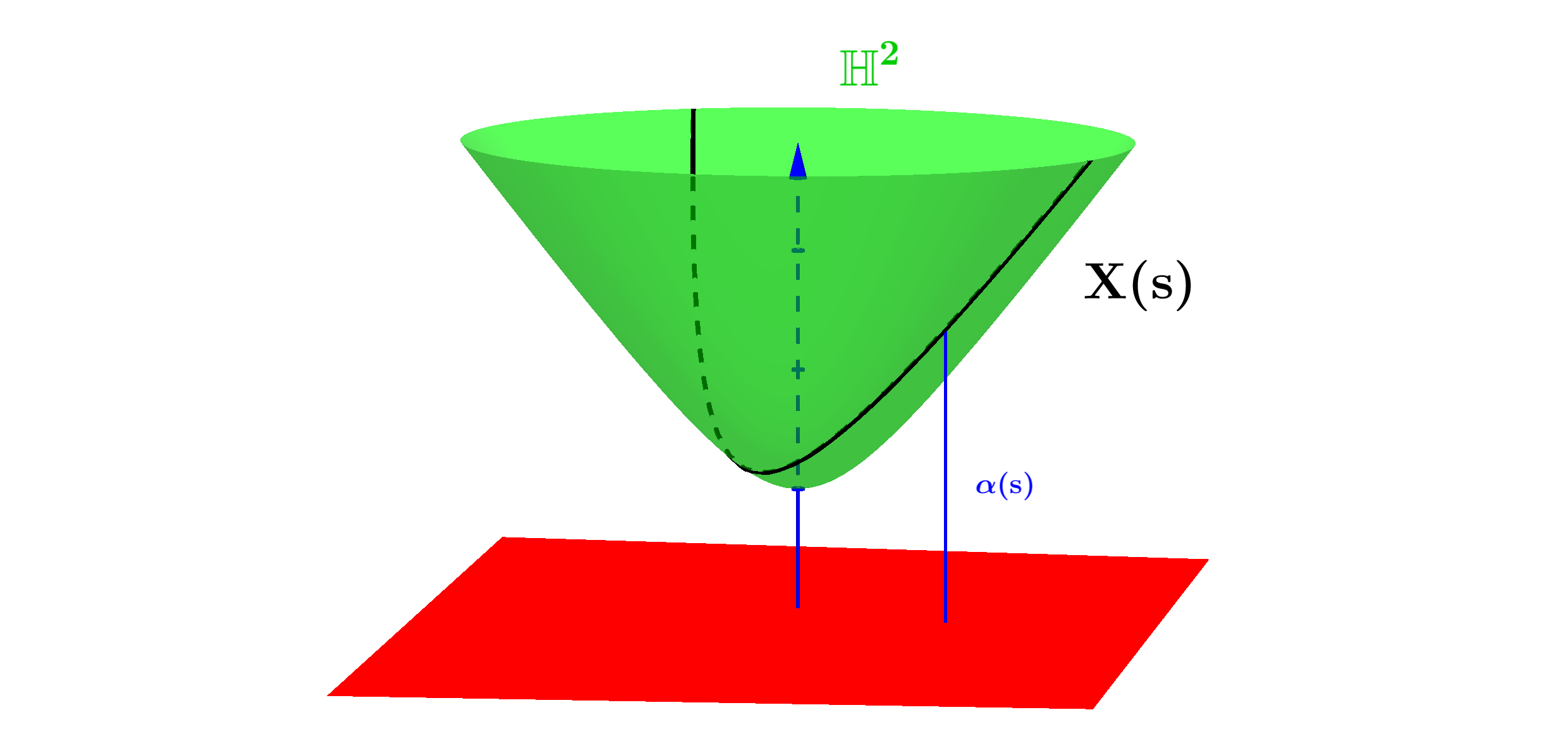}
		\label{H}
	}
	\quad %espaco separador
	\subfloat[]{
		\includegraphics[height=2 cm]{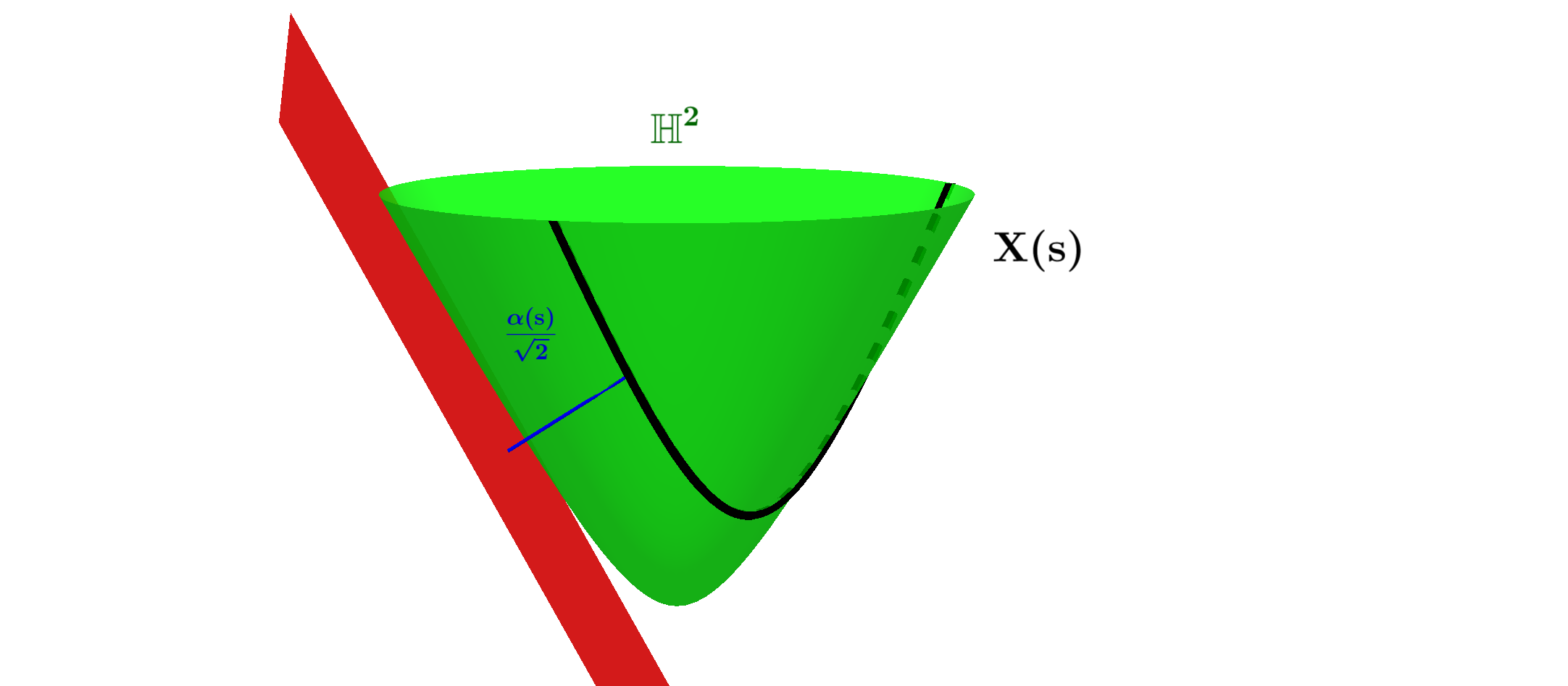}
		\label{C}
	}
	\quad %espaco separador
	\subfloat[]{
		\includegraphics[height= 2 cm]{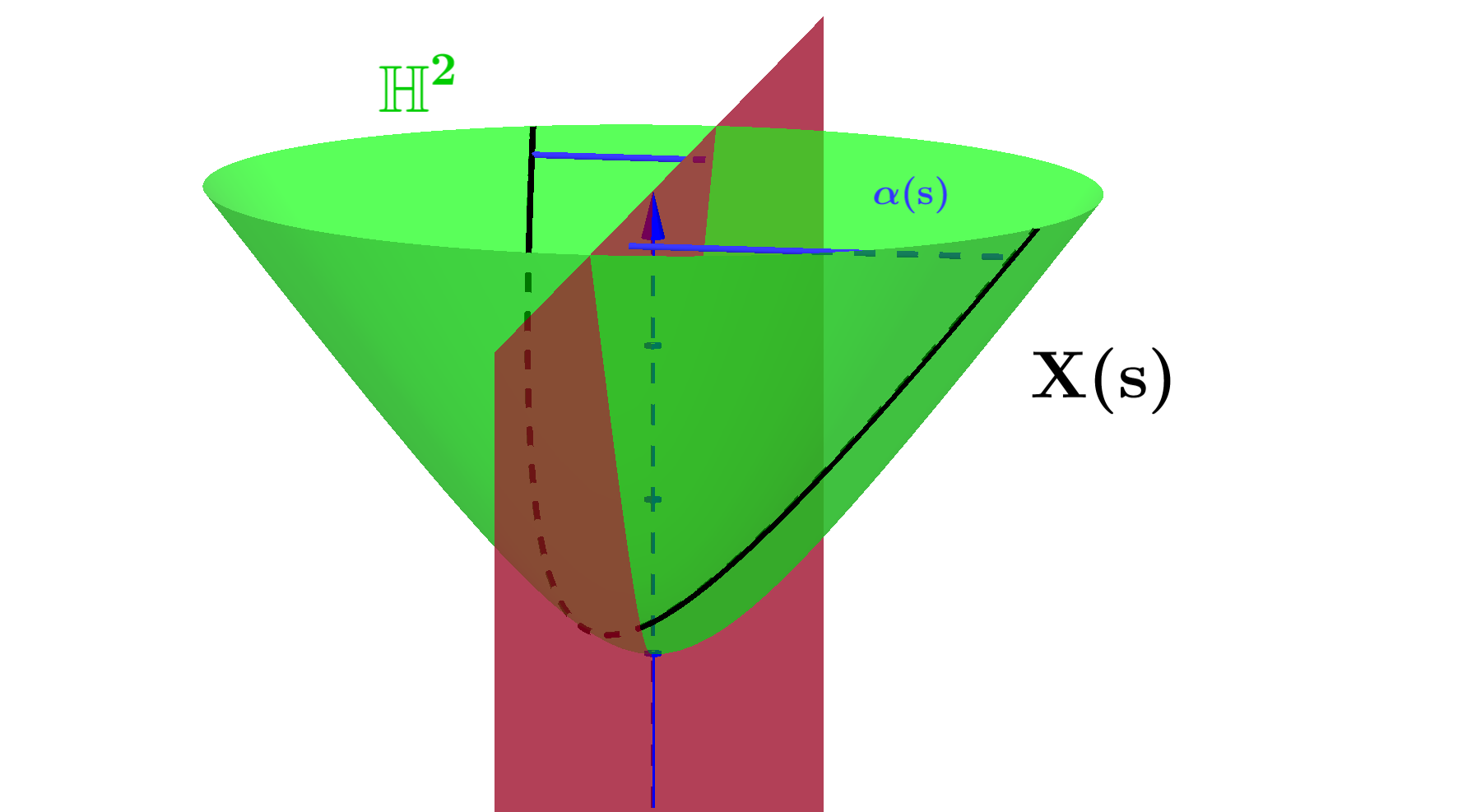}
		\label{S}
	}
	\caption{Geometric interpretation  of the functions $\alpha(s)$.}
	\label{fig4}
\end{figure}

As we have seen in Propositions \ref{p1}, \ref{p2} and Remark \ref{ob1}, the investigation  of the  soliton solutions to the CSF on the 2-dimensional hyperbolic space is equivalent to studying  the  solutions \mbox{$\psi(s)=(\alpha(s),\tau(s), \eta(s))$} of the system 
\begin{equation}\label{s1}
\left\{\begin{array}{lll}
\alpha^{\prime}(s)=\tau(s)\\
\tau^{\prime}(s)=a\tau(s)\eta(s)+\alpha(s)\\
\eta^{\prime}(s)=-a\tau^{2}(s),
\end{array}\right.
\end{equation}
for each constant $a>0$ and initial condition $\displaystyle \psi(0) \in H \cup C \cup S \subset \mathbb{R}^{3},$ where 
\begin{equation}\label{c} \begin{array}{l}
H:=\{(\alpha,\tau,\eta) \in \mathbb{R}^{3}: -\alpha^2+\tau^{2}+\eta^2=-1, \alpha>0\},\\	
C:=\{(\alpha,\tau,\eta) \in \mathbb{R}^{3}\setminus \{0\}: -\alpha^2+\tau^{2}+\eta^2=0, \alpha>0\},\\
S:=\{(\alpha,\tau,\eta) \in \mathbb{R}^{3}: -\alpha^2+\tau^{2}+\eta^2=1\}.	
\end{array}
\end{equation}
These are disjoint sets and if the initial condition 
$\psi(0)\in H$ (resp. $C$ or $S$) then the solution $\psi(s)$ defined on the maximal interval $I$ will  be contained in $ H$ (resp. $C$ or $S$) for all $s\in I$. 

From now on, using \eqref{s1}, we will prove a series of lemmas that will provide the proof of the main result (Theorem \ref{c2t2}). Namely, we will prove that for any initial condition, the solutions $\psi(s)$ of \eqref{s1} and hence the associated soliton solutions to  the hyperbolic space are defined on the whole $\R$. Moreover, we will analize the behaviour of the curvature function of the solitons at each end.  

In the first lemma we will study the solution of \eqref{s1} such that the function $\tau(s)$ is constant. As we will see such solutions (that will be called trivial) only exit on $S$. 

\begin{lemma} \label{l1}
	Let $\psi(s)=(\alpha(s),\tau(s),\eta(s))$ be a non null solution of \eqref{s1} defined on the maximal interval $I=(\omega_{-},\omega_{+})$, $a>0$ and initial condition $\psi(0) \in H\cup C \cup S$, where $H$, $C$ and $S$ are given by \eqref{c}. Then 
	the function $\tau(s)=b$, $s \in I$, where $b$ is a real constant if, and only if, $b \in \{-1,0,1\}$, $I=\mathbb{R}$ and $\psi(s) \in S$ for all $s \in \mathbb{R}.$ Moreover,
	\begin{itemize}
		\item[i)] If $b=0$, then $\psi(s)=(0,0,\pm 1)$ are singular solutions of \eqref{s1} in $S$. 
		\item[ii)] If $b^{2}=1$, then $a=1$ and $\psi(s)=(\pm s+\alpha(0),\,\pm 1,\,-s\pm \alpha(0)).$
	\end{itemize}
\end{lemma}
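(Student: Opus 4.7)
My plan is to do a direct substitution into the system \eqref{s1} and then exploit the fact, already established in Proposition \ref{p1}, that the quadratic form $-\alpha^{2}(s)+\tau^{2}(s)+\eta^{2}(s)$ is conserved along every solution. Assume $\tau(s)\equiv b$ on $I$. The first equation of \eqref{s1} yields $\alpha(s)=bs+\alpha(0)$ and the third equation yields $\eta(s)=-ab^{2}s+\eta(0)$, both of which are defined on all of $\mathbb{R}$. Plugging these into the second equation $\tau'(s)=a\tau(s)\eta(s)+\alpha(s)=0$ gives
\[
0=ab\bigl(-ab^{2}s+\eta(0)\bigr)+\bigl(bs+\alpha(0)\bigr)=b(1-a^{2}b^{2})\,s+\bigl(ab\,\eta(0)+\alpha(0)\bigr)\qquad\forall s\in I,
\]
so that $b(1-a^{2}b^{2})=0$ and $\alpha(0)=-ab\,\eta(0)$.

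Next I split into the two natural cases. If $b=0$, then $\alpha(s)\equiv\alpha(0)=0$ and $\eta(s)\equiv\eta(0)$. The conserved quantity becomes $\eta(0)^{2}$, which equals $-1$, $0$ or $1$ when $\psi(0)\in H$, $C$ or $S$ respectively; the first is impossible, the second would force $\psi(0)=0$ (ruled out by the non-null hypothesis and the definition of $C$), and the third yields $\eta(0)=\pm1$, giving the singular solutions $\psi(s)=(0,0,\pm 1)\in S$. If instead $b\neq0$, then $a^{2}b^{2}=1$; substituting $\alpha(0)=-ab\,\eta(0)$ into the conserved quantity,
\[
-\alpha(0)^{2}+b^{2}+\eta(0)^{2}=-a^{2}b^{2}\eta(0)^{2}+b^{2}+\eta(0)^{2}=b^{2},
\]
where I used $a^{2}b^{2}=1$. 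Hence the conserved value equals $b^{2}>0$, ruling out $H$ and $C$, and on $S$ forcing $b^{2}=1$; combined with $a^{2}b^{2}=1$ and $a>0$, this gives $a=1$ and $b=\pm1$.

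Finally, for $a=1$ and $b=\pm1$, the relations above yield $\alpha(s)=\pm s+\alpha(0)$, $\tau(s)=\pm1$, and $\eta(s)=-s+\eta(0)$ with $\eta(0)=-\alpha(0)/b=\mp\alpha(0)$, matching the stated formula $\psi(s)=(\pm s+\alpha(0),\pm 1,-s\pm\alpha(0))$, defined on all of $\mathbb{R}$. The converse is immediate: substituting the candidate constants $\psi(s)=(0,0,\pm 1)$ (for $b=0$) or the explicit affine expression (for $b=\pm 1$, $a=1$) back into \eqref{s1} verifies that each is a solution with constant $\tau$-component lying in $S$ for all $s\in\mathbb{R}$.

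There is no real obstacle here; the argument is a clean case analysis that hinges on two ingredients, both supplied by earlier work in the paper: the polynomial identity forced by $\tau'\equiv0$, and the conservation of $-\alpha^{2}+\tau^{2}+\eta^{2}$ from Proposition \ref{p1}. The only point requiring a little care is the sign bookkeeping in the final explicit formula and the use of the non-null hypothesis to discard the origin in the lightlike case.
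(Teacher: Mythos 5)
Your proof is correct and follows essentially the same route as the paper: substitute $\tau\equiv b$ into \eqref{s1}, integrate the linear equations for $\alpha$ and $\eta$, force $a^2b^2=1$ (or $b=0$) from the second equation, and use the conserved quantity $-\alpha^2+\tau^2+\eta^2$ to rule out $H$ and $C$ and pin down $b^2=\gamma=1$, $a=1$. The only cosmetic difference is that you identify coefficients of the resulting polynomial in $s$ while the paper argues via constancy of $[-a^2b^2+1]\eta^2(s)$; your sign bookkeeping ($\eta=-\lambda\alpha$ for $\tau=\lambda=\pm1$) agrees with the paper's conclusion $\alpha(s)=\mp\eta(s)$.
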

\begin{proof} 
	If $b=0$, it follows from \eqref{s1} that $\alpha(s)=0$ for all $s \in I$. Using the equation $-\alpha^{2}(s)+\tau^{2}(s)+\eta^{2}(s)=\gamma$, where $\gamma \in \{-1,0,1\}$, we obtain $\eta^{2}(s)=1$ for all $s \in I$. Hence, $\psi(s)=(0,0,\pm 1)$, $\forall\; s \in \mathbb{R}$ are singular solutions of \eqref{s1} in $S$.
	
	If $\psi(s)$ is not a singular solution, then $b\neq 0$ and it follows from \eqref{s1} that $$a\,b\,\eta(s)=-\alpha(s)\hspace{1 cm} \text{and} \hspace{1 cm} \eta(s)=-ab^{2}s+\eta(0),$$ for all $s \in \mathbb{R}$. Using the relation $-\alpha^{2}(s)+\tau^{2}(s)+\eta^{2}(s)=\gamma$, where $\gamma \in \{-1,0,1\}$, we conclude that 
	$$
	[-a^{2}\,b^{2}+1]\eta^{2}(s)=\gamma -b^{2},
	$$ 
and hence the function $[-a^{2}b^{2}+1]\alpha^{2}(s)$ is also constant. Since $\psi(s)$ is not a singular solution, it follows that  $\displaystyle a^{2}b^{2}=1$  and $b^2=\gamma$.
Therefore, $\gamma=1$, $b=\pm 1$, $a=1$ and  $\alpha(s)=\mp\eta(s)$,  for all $s \in \mathbb{R}$.  This concludes the proof.   
 \end{proof}

It follows from Lemma \ref{l1} that when $\tau(s)$ is a constant function then the functions $\alpha(s)$ and $\eta(s)$ are linear in $s$  and its corresponding soliton solutions to the CSF are curves of constant curvature i.e. geodesics  when $k(s)=\tau(s)=0$ or planar curves  with curvature $k(s)=\tau(s)=\pm 1$. 

In this context, we define a {\it trivial solution}  $\psi(s)=(\alpha(s),\tau(s),\eta(s))$ of \eqref{s1},  when $\tau(s)$ is a constant function. From now on, we will study only non trivial solutions of \eqref{s1}. It follows from Lemma \ref{l1} that there are no trivial solutions of  \eqref{s1} in $H\cup C$. 

On our next lemmas we will study the solutions  $\psi(s)$ of \eqref{s1} contained in $H\cup C$ and those contained in $S$ separately.  

\begin{lemma}\label{l2}
	Let $\psi(s)=(\alpha(s),\tau(s),\eta(s))$ be a  solution of \eqref{s1} defined on the maximal interval $I=(\omega_{-},\omega_{+})$, $a>0$ and initial condition $\psi(0) \in H\cup C $, where $H$ and $C$ are given by \eqref{c}.
	\begin{itemize}
		\item[i)] If $\alpha(s)$ has a critical point then it is a global minimum point of $\alpha$. Moreover, there exists always $\overline{s} \in I$ such that $\alpha(s)$ is strictly monotone on the intervals $(\omega_{-},\overline{s}]$ and $[\overline{s}, \omega_{+})$. 
		\item[ii)] If $s_0$ is a critical point of $\tau(s)$, then $a^{2}\tau^{2}(s_0)>1$ and $s_0$ is a local minimum (resp. maximum) point of $\tau(s)$ if, and only if, $\tau(s_0)<0$ (resp. $\tau(s_0)>0$).
	\end{itemize}
\end{lemma}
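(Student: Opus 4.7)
The plan splits naturally into the two parts of the lemma.

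\textbf{For part (i)}, I would differentiate the first equation of \eqref{s1} once more, using the second, to obtain $\alpha''(s)=a\tau(s)\eta(s)+\alpha(s)$. At a critical point $s_0$ of $\alpha$ we have $\tau(s_0)=0$, so $\alpha''(s_0)=\alpha(s_0)>0$, since $\alpha>0$ on $H\cup C$ by the definition \eqref{c}. Hence every critical point of $\alpha$ is a strict local minimum. A short sign-chasing argument then gives uniqueness: if $s_1<s_2$ were two distinct critical points, then $\alpha'$ would be positive just to the right of $s_1$ and negative just to the left of $s_2$, so by the intermediate value theorem $\alpha'$ would vanish at some $s_3\in(s_1,s_2)$ where it changes sign from $+$ to $-$, making $s_3$ a local maximum of $\alpha$ and contradicting the fact that \emph{every} critical point is a strict minimum. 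Thus at most one critical point $\overline{s}$ exists in $I$; if it does, $\alpha$ is strictly decreasing on $(\omega_-,\overline{s}]$ and strictly increasing on $[\overline{s},\omega_+)$, and if it does not, $\alpha$ is strictly monotone on $I$ and any $\overline{s}\in I$ works.

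\textbf{For part (ii)}, the plan is to combine the critical-point relation with the conservation law $-\alpha^{2}+\tau^{2}+\eta^{2}=\gamma$, where $\gamma\in\{-1,0\}$ on $H\cup C$ by Proposition \ref{p1}. At a critical point $s_0$ of $\tau$, the second equation of \eqref{s1} gives $\alpha(s_0)=-a\tau(s_0)\eta(s_0)$, which already forces $\tau(s_0)\neq 0$ (otherwise $\alpha(s_0)=0$, contradicting $\alpha>0$ on $H\cup C$). Squaring yields $\eta(s_0)^{2}=\alpha(s_0)^{2}/(a^{2}\tau(s_0)^{2})$, and plugging this into the conservation law gives, after clearing denominators,
\[
\alpha(s_0)^{2}\bigl(1-a^{2}\tau(s_0)^{2}\bigr)=a^{2}\tau(s_0)^{2}\bigl(\gamma-\tau(s_0)^{2}\bigr).
\]
The right-hand side is strictly negative since $\gamma\le 0$ and $\tau(s_0)^{2}>0$, while $\alpha(s_0)^{2}>0$, forcing $1-a^{2}\tau(s_0)^{2}<0$, which is the first claim. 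For the nature of the extremum, I would differentiate the second equation of \eqref{s1} and use the other two to get $\tau''(s)=a\tau'(s)\eta(s)-a^{2}\tau(s)^{3}+\tau(s)$, so that at $s_0$
\[
\tau''(s_0)=\tau(s_0)\bigl(1-a^{2}\tau(s_0)^{2}\bigr).
\]
Since the second factor is strictly negative by what was just shown, $\tau''(s_0)$ has sign opposite to that of $\tau(s_0)$, giving the claimed classification of local minima and maxima.

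The main obstacle is the uniqueness argument in (i), which relies crucially on the sign of $\alpha$ on $H\cup C$ and would fail on $S$; part (ii) is a direct computation once one remembers to exploit the conserved quantity \eqref{eq4} carried by every orbit of \eqref{s1}.
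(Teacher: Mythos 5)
Your proposal is correct and follows essentially the same route as the paper: the same second-derivative computation $\alpha''(s_0)=\alpha(s_0)>0$ for (i), and for (ii) the same use of the conserved quantity at a critical point of $\tau$ (you eliminate $\eta$ where the paper eliminates $\alpha$, an immaterial difference) together with $\tau''(s_0)=\tau(s_0)\bigl(1-a^{2}\tau^{2}(s_0)\bigr)$.
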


\begin{proof}	
	{\it i)}  Let $s_0$ be a critical point of $\alpha(s)$. Note that $\alpha(s)>0$ for all $s \in I$ whenever $\psi(0) \in H\cup C $.  Taking the second derivative of $\alpha(s)$ and using \eqref{s1} at $s=s_0$, we have
	\begin{equation*}
	\alpha^{\prime \prime}(s_{0})=\tau^{\prime}(s_{0})=a\tau(s_{0})\eta(s_{0})+\alpha(s_{0})=\alpha(s_{0})>0.
	\end{equation*}
	Hence, $s_0$ is a global minimum point of $\alpha(s)$. Therefore, $\alpha(s)$ has at  most one critical point. If  there are no critical points then $\alpha(s)$ is trictly monotone on $I$. 
	
%	i.e. there is always $\overline{s} \in I$ such that $\alpha(s)$ is strictly monotone on  % the intervals $(\omega_{-},\overline{s}]$ and $[\overline{s}, \omega_{+})$
	
	{\it ii)}  Let $s_0$ be  a critical point of $\tau(s)$. Then $\tau^{\prime}(s_{0})=a\eta(s_{0})\tau(s_{0})+\alpha(s_{0})=0$ and $\eta(s_{0})\tau(s_{0})\neq 0$ because $\alpha(s)>0$ for all $s \in I$. Since $\psi(0)\in H\cup C$, it follows that  $-\alpha^{2}(s)+\tau^{2}(s)+\eta^{2}(s)=\delta\leq0$ for all $s \in I$, where $\delta \in \{-1,0\}$. Thus,
	\begin{equation*}
	\delta=-a^{2}\tau^{2}(s_{0})\eta^{2}(s_{0})+\tau^{2}(s_{0})+\eta^{2}(s_{0})=\eta^{2}(s_{0})[-a^{2}\tau^{2}(s_0)+1]+\tau^{2}(s_{0})\leq 0.
	\end{equation*}
	Hence, $-a^{2}\tau^{2}(s_0)+1<0$. Taking the second derivative of $\tau(s)$ and using \eqref{s1} at $s=s_0$, we have
	\begin{equation}\label{t}
		\tau^{\prime \prime}(s_{0})=a\tau^{\prime}(s_{0})\eta(s_{0})+a\tau(s_{0})\eta^{\prime}(s_{0})+\alpha^{\prime}(s_{0})
%		=-a^{2}\tau^{3}(s_{0})+\tau(s_{0})
		=\tau(s_{0})\left[-a^{2}\tau^{2}(s_{0})+1\right].
	\end{equation}
	This concludes the proof of {\it ii)}.
\end{proof} 

\begin{lemma}\label{l3}
	Let $\psi(s)=(\alpha(s),\tau(s),\eta(s))$ be a  solution of \eqref{s1} defined on the maximal interval $I=(\omega_{-},\omega_{+})$, $a>0$ and initial condition $\psi(0) \in C $, where $C$ is given by \eqref{c}.
	\begin{itemize}
		\item[i)] If $\tau(s)>0$ in $ I$, then $\alpha(s)$ is strictly increasing in $I$, $\tau(s)$ is bounded and it has at  most one critical point in $I$. Moreover,  $\omega_{-}=-\infty$, $\displaystyle \lim_{s \to -\infty} \psi(s)=(0,0,0)$ and $\displaystyle \lim_{s \to \omega_{+}} -\eta(s)=\lim_{s \to \omega_{+}} \alpha(s)=+\infty.$ 
		\item[ii)] If $\tau(s)<0$ in $I$, then $\alpha(s)$ is strictly decreasing in $I$, $\tau(s)$ is bounded and it has at  most one critical point in $I$. Moreover,  $\omega_{+}=+\infty$, $\displaystyle \lim_{s \to +\infty} \psi(s)=(0,0,0)$ and $\displaystyle \lim_{s \to \omega_{-}} \eta(s)=\lim_{s \to \omega_{-}} \alpha(s)=+\infty.$
	\end{itemize}
\end{lemma}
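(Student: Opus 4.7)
The plan is to prove part (i) directly and to deduce part (ii) from the time-reversal symmetry $\sigma\colon(\alpha(s),\tau(s),\eta(s))\mapsto(\alpha(-s),-\tau(-s),-\eta(-s))$, which one verifies preserves the system \eqref{s1} and the set $C$ while flipping the sign of $\tau$. So I focus on part (i), assuming $\tau>0$ on $I$.

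The easy steps come first. Monotonicity of $\alpha$ is immediate from $\alpha'=\tau>0$. For the critical points of $\tau$, Lemma~\ref{l2}(ii) says any such point (with $\tau>0$) is a strict local maximum, so two of them would, by Rolle, bracket a third critical point that is a local minimum; Lemma~\ref{l2}(ii) would then force $\tau<0$ there, contradicting the standing hypothesis. To rule out finite-time blow-up I would use that on $C$ one has $|\tau|,|\eta|\le\alpha$, so $|\alpha'|\le\alpha$; Gronwall yields $\alpha(s)\le\alpha(0)e^{|s|}$ on any bounded sub-interval of $I$, showing $I=\mathbb{R}$.

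For the behavior at $s\to-\infty$, monotonicity and boundedness of $\alpha$ and of $\eta$ (on $(-\infty,0]$) give finite limits $\alpha_{-\infty},\eta_{-\infty}$, and the cone identity then forces $\tau\to\tau_{-\infty}=\sqrt{\alpha_{-\infty}^2-\eta_{-\infty}^2}$. Since $\alpha$ converges and $\alpha'=\tau$ converges, its limit must be zero (otherwise $\alpha$ would grow linearly in $s$), so $\tau_{-\infty}=0$ and $\eta_{-\infty}^2=\alpha_{-\infty}^2$. The same observation applied to $\tau'(s)\to\alpha_{-\infty}$ then forces $\alpha_{-\infty}=0$, whence $\psi\to(0,0,0)$. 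For $s\to+\infty$ the parallel analysis gives $\alpha\to+\infty$ (otherwise $\tau\to0$ but $\tau'\to\alpha_{+\infty}>0$, impossible) and then $\eta\to-\infty$ (else $\tau^2=\alpha^2-\eta^2\to\infty$ would force $\eta'=-a\tau^2\to-\infty$, hence $\eta\to-\infty$ after all, contradicting the assumed finiteness of $\eta_{+\infty}$).

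The step I expect to be the main obstacle is showing that $\tau$ is bounded. If $\tau$ has a (unique) critical point $s_*$, then $\tau\le\tau(s_*)$ on $\mathbb{R}$ and we are done. Otherwise $\tau$ is strictly monotone on $\mathbb{R}$; since $\tau(-\infty)=0<\tau(0)$ it is strictly increasing. Assuming $\tau\to+\infty$ toward contradiction, I would combine $\eta\to-\infty$ with the cone identity: for $s$ large enough $1/\tau^2+1/\eta^2<a^2$, equivalently $\alpha^2=\tau^2+\eta^2<a^2\tau^2\eta^2$, so $\alpha<-a\tau\eta$ and hence $\tau'=a\tau\eta+\alpha<0$, contradicting monotone increase. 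Part (ii) then follows by applying part (i) to $\sigma(\psi)$ and reading back the conclusions under the symmetry.
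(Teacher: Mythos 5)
Your proof is correct, and although it follows the same overall skeleton as the paper's argument (monotonicity of $\alpha$ from $\alpha'=\tau$, at most one critical point of $\tau$ via Lemma \ref{l2}(ii), finite limits at $-\infty$ by monotone convergence on $C$, then unboundedness of $\alpha$ and $\eta$ at the other end), several of your sub-arguments take a genuinely different and in places cleaner route. First, you obtain $I=\mathbb{R}$ up front from $|\alpha'|=|\tau|\le\alpha$ on $C$ and Gronwall, whereas the paper only extracts $\omega_-=-\infty$ here and defers completeness to Lemma \ref{lb8}. Second, where the paper identifies the would-be finite limit point as a singular (trivial) solution and invokes Lemma \ref{l1}, you use the elementary fact that if $f$ and $f'$ both converge at $\pm\infty$ then $f'\to0$, applied first to $\alpha'=\tau$ and then to $\tau'$; this is more self-contained. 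Third, for $\eta\to-\infty$ and for the boundedness of $\tau$, the paper integrates the inequality $2\tau<a\tau^2$ against \eqref{s1} to contradict the unboundedness of $\alpha$, while you note that $\eta'=-a\tau^2\to-\infty$ already forces $\eta\to-\infty$, and you exclude $\tau\to+\infty$ by the pointwise implication $\tau^{-2}+\eta^{-2}<a^2\Rightarrow\alpha<-a\tau\eta\Rightarrow\tau'<0$ --- essentially the trick the paper reserves for the Claim in the proof of Lemma \ref{l7}. Finally, deriving (ii) from the explicit symmetry $(\alpha,\tau,\eta)(s)\mapsto(\alpha(-s),-\tau(-s),-\eta(-s))$ of \eqref{s1} makes the paper's ``analogous'' precise. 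All steps check out; in particular, the unique critical point of $\tau$, when it exists, is a strict local maximum and hence global, which settles boundedness immediately in that case.
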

\begin{proof} {\it i)}  If $\tau(s)>0$, then it follows from Lemma \ref{l2} that $\tau(s)$ has only local maximum points i.e. $\tau(s)$ has at  most one critical point. The positive function $\alpha(s)$ is bounded and there exists  $\overline{s} \in I$ such that $\alpha$ is  strictly increasing on $(\omega_{-},\overline{s})$. Thus, it follows from equation $\alpha^{2}(s)=\tau^{2}(s)+\eta^{2}(s)$ that we can take $\overline{s}$ such that $\tau(s)$ and $\eta(s)$ are bounded and monotone on $(\omega_{-},\overline{s})$. The interval $I$ is  maximal, hence $\omega_{-}=-\infty.$ Since the limits $\displaystyle \lim_{s \to -\infty} \alpha(s)$, $\displaystyle \lim_{s \to -\infty} \alpha^{\prime}(s)=\lim_{s \to -\infty}\tau(s)$ and $\displaystyle \lim_{s \to -\infty} \tau^{\prime}(s)$ exist, we obtain 
that $\displaystyle \lim_{s \to -\infty} \tau(s)=0$ and $\displaystyle \lim_{s \to -\infty} \alpha(s)=\lim_{s \to -\infty} \eta(s)=0$. Using the fact that the function $\eta(s)$ is  decreasing, we get that $\eta(s)<0$ for all $s \in I.$
	
	We claim that $\alpha(s)$ is ununbounded on $(\overline{s},\omega_{+})$. In fact, assume by contradiction that the strictly increasing function $\alpha(s)$ is bounded on $(\overline{s},\omega_{+})$. Thus, it follows from equation $\alpha^{2}(s)=\tau^{2}(s)+\eta^{2}(s)$ that we can take $\overline{s}$ such that $\tau(s)$ and $\eta(s)$ are bounded and monotone on $(\overline{s},\omega_{+})$. Hence, there exists $p \in C $ such that $\displaystyle \lim_{s \to \omega_{+}}(\alpha(s),\tau(s),\alpha(s))=p$ and $p$ is a singular (trivial) solution in $C$, which contradicts  Lemma \ref{l1}. Therefore, $\displaystyle \lim_{s\to \omega_{+}}\alpha(s)=+\infty.$
	
	Now, assume by contradiction that the strictly decreasing and negative function $\eta(s)$ is bounded on $(\overline{s},\omega_{+})$. Since $\tau^{2}(s)+\eta^{2}(s)=\alpha^{2}(s)$, it follows that the function $\tau(s)$ is unbounded and positive on $(\overline{s},\omega_{+})$,   because we showed that $\alpha(s)$ is unbounded on $(\overline{s},\omega_{+})$. Thus, we can choose $\overline{s}$ such that $2\tau(s)<a\tau^{2}(s)$ for all $s>\overline{s}$. Using the equations of \eqref{s1}, we obtain
	\begin{equation}\label{ps2}
	2\alpha(s)-2\alpha(\overline{s})=2\int_{\overline{s}}^{s}\tau(s)ds<\int_{\overline{s}}^{s}a\tau^{2}(s)ds=-\eta(s)+\eta(\overline{s}).
	\end{equation}
	Hence	
	\begin{equation*}
	2\alpha(s)<-\eta(s)+\eta(\overline{s})+2\alpha(\overline{s}),
	\end{equation*}
	for each $s \in (\overline{s},\omega_{+})$.  But this contradicts the fact that  $\alpha(s)$ is unbounded. Therefore, $\eta(s)$ is unbounded and  $\displaystyle \lim_{s \to \omega_{+}} \eta(s)=-\infty$.
	
	Finally, if $\tau(s)$ is unbounded on $(\overline{s},\omega_{+})$, $\overline{s} \in I$, then we can choose again $\overline{s}$ such that $2\tau(s)<a\tau^{2}(s)$ for all $s>\overline{s}$. Using \eqref{ps2}, we obtain
	\begin{equation*}
	\alpha(s)+\eta(s)< 2\alpha(\overline{s})-\eta(\overline{s})-\alpha(s),
	\end{equation*}
	this is a contradiction, because $\alpha^{2}(s)=\tau^{2}(s)+\eta^{2}(s)$ i.e. $\alpha(s)>-\eta(s)$ and $\displaystyle \lim_{s \to \omega_{+}} \alpha(s)=+\infty$.
	
	\it ii) \rm This proof is analogous to the proof of item {\it i)}. 
\end{proof}

\begin{lemma}\label{l4}
	Let $\psi(s)=(\alpha(s),\tau(s),\eta(s))$ be a  solution of \eqref{s1} defined on the maximal interval $I=(\omega_{-},\omega_{+})$, $a>0$ and initial condition $\psi(0) \in H $, where $H$ is given by \eqref{c}. Then there exists a unique  $s_0$ such that $\alpha^{\prime}(s_0)=\tau(s_0)=0$.
\end{lemma}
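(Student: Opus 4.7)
The plan is to dispose of uniqueness immediately via Lemma \ref{l2}\,{\it i)}, which already shows that any critical point of $\alpha$ is a strict (global) minimum, so at most one such point can exist. The substantive content lies in existence, which I would establish by contradiction using the asymptotic structure of \eqref{s1} on $H$.

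Assuming $\tau$ never vanishes, continuity forces $\tau$ to have constant sign on $I$. I would exploit the involution $(\alpha(s),\tau(s),\eta(s)) \mapsto (\alpha(-s), -\tau(-s), -\eta(-s))$, which preserves both \eqref{s1} and the set $H$ while reversing the sign of $\tau$, to reduce to the case $\tau > 0$ on $I$. Then $\alpha$ is strictly increasing and $\eta$ strictly decreasing on $I$. Going backward toward $\omega_-$, the bounds $\alpha \geq 1$ (from $H$) and $|\eta| \leq \alpha \leq \alpha(0)$ (from the constraint $\alpha^2 = 1 + \tau^2 + \eta^2$) force $\alpha$ and $\eta$ to converge to finite limits $\alpha_- \geq 1$ and $\eta_-$, and then $\tau$ is bounded by the constraint. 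Since $\psi$ stays bounded on $(\omega_-, 0]$, maximality of $I$ yields $\omega_- = -\infty$.

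The contradiction then comes from a Barbalat-type argument: $\tau = \alpha'$ is integrable on $(-\infty, 0]$ (its integral equals $\alpha(0) - \alpha_-$) and uniformly continuous (since $\tau' = a\tau\eta + \alpha$ is bounded on $(-\infty, 0]$), hence $\tau(s) \to 0$ as $s \to -\infty$; passing to the limit in the equation for $\tau'$ then forces $\lim_{s\to-\infty} \tau'(s) = \alpha_- \geq 1$, which on integration drives $\tau(s) \to -\infty$ backward and contradicts $\tau > 0$. The main obstacle I anticipate is precisely this asymptotic step: the reduced 2D dynamics on $H$ admits no equilibrium, so one must simultaneously rule out a constant-sign-$\tau$ trajectory escaping to infinity backward and reaching a pseudo-equilibrium where the ODE is self-inconsistent. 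Closing this off relies on the three ingredients $\alpha \geq 1$, $|\eta| \leq \alpha$, and the conserved quadratic form working together; the symmetry reduction to $\tau > 0$ is a useful simplification to avoid writing the $\tau < 0$ case separately.
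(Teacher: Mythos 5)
Your proposal is correct and follows essentially the same route as the paper: assume $\tau$ never vanishes, use monotonicity of $\alpha$ and $\eta$ together with the constraint $\alpha^{2}=1+\tau^{2}+\eta^{2}$ to show the trajectory stays bounded and converges on an infinite time interval, and derive a contradiction from the absence of equilibria of \eqref{s1} on $H$. Your Barbalat step simply makes explicit the paper's assertion that such a limit point would have to be a singular solution (excluded by Lemma \ref{l1}), and your involution replaces the paper's ``the other sign is handled similarly.''
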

\begin{proof}
	Assume by contradiction that such  an $s_0$ does not exist. Then, either $\tau(s)<0$ or $\tau(s)>0$ for all $s \in I$.  If  $\tau(s)<0$, then $\alpha(s)$ is strictly decreasing. Taking $\overline{s} \in I$ we have $1\leq \alpha(s)\leq \alpha(\overline{s})$ for all $s>\overline{s}$. Since  $-\alpha^2(s)+\tau^{2}(s)+\eta^{2}(s)=-1$, then $\tau^{2}(s)+\eta^{2}(s)<\alpha^2(s) <\alpha^2(\overline{s})$ for all $s>\overline{s}$. Hence, the functions $\alpha(s), \eta(s)$ are bounded and monotone in $[\overline{s},\omega_{+})$, and $\displaystyle \lim_{s \to \omega_{+}}\tau(s)$ exists, because $\tau^{2}(s)=-1+\alpha^{2}(s)-\eta^{2}(s)$. Thus, there exists a point $p \in H$ such that $\displaystyle \lim_{s \to \omega_{+}}(\alpha(s),\tau(s),\alpha(s))=p$. Therefore, $\displaystyle \omega_{+}=+\infty$ and $p$ is a singular (trivial) solution of \eqref{s1}, which contradicts  Lemma \ref{l1}.	
	In a similar way, we can prove that $\tau(s)<0$ for all $s \in I$ cannot occur.
	
	Therefore, there is an $s_0\in I$ such that $\alpha^{\prime}(s_0)=\tau(s_0)=0$. It follows from  Lemma \ref{l2}, that $s_0$ is a global minimum of the function $\alpha(s)$. Hence, $s_0$ is unique.
\end{proof}

In the next three lemmas, we will suppose that $\psi(0) \in H\cup C$ and that $\alpha(s)$ has  only one critical point. Note that, this hypothesis only excludes the case presented in  Lemma \ref{l3} because it follows from Lemma \ref{l1} that $\alpha(s)$ has at  most one critical point when $\psi(0)\in H\cup C$ and  Lemma \ref{l4} shows that $\alpha(s)$ has a unique critical point, when $\psi(0)\in H$.

\begin{lemma}\label{l5}
	Let $\psi(s)=(\alpha(s),\tau(s),\eta(s))$ be a  solution of \eqref{s1} defined on the maximal interval $I=(\omega_{-},\omega_{+})$, $a>0$ and initial condition $\psi(0) \in H\cup C $, where $H$ and $C$ are given by \eqref{c}.
	If $\alpha(s)$ has one critical point, then $\displaystyle \lim_{s \to \omega_{-}}\alpha(s)=\lim_{s \to \omega_{+}}\alpha(s)=\infty$. 
\end{lemma}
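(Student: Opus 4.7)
The strategy is to apply Lemma \ref{l2}(i) to identify the unique critical point $\overline{s}$ as the global minimum of $\alpha$, so that $\tau=\alpha'<0$ on $(\omega_-,\overline{s})$ and $\tau>0$ on $(\overline{s},\omega_+)$. By the symmetry of the two ends it suffices to prove $\alpha(s)\to+\infty$ as $s\to\omega_+$; the case $s\to\omega_-$ is analogous. I proceed by contradiction: suppose $\alpha$ stays bounded on $[\overline{s},\omega_+)$. The first-integral identity $-\alpha^{2}+\tau^{2}+\eta^{2}=\delta$ with $\delta\in\{-1,0\}$ then bounds $\tau^{2}+\eta^{2}$ too, so the entire trajectory $\psi$ lies in a compact subset of $\mathbb{R}^{3}$. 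Standard extension theory for autonomous ODEs rules out $\omega_+<\infty$, so the maximal interval extends to $\omega_+=+\infty$.

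On $[\overline{s},\infty)$ I extract limits as $s\to\infty$: $\alpha$ is monotone and bounded, so $\alpha\to\alpha_\infty$; since $\eta'=-a\tau^{2}\le 0$ and $\eta$ is bounded, $\eta\to\eta_\infty$; and $\tau^{2}=\alpha^{2}+\delta-\eta^{2}$ converges with $\tau>0$, producing $\tau\to\tau_\infty\ge 0$. The contradiction emerges from a two-step squeeze. First, $\alpha'=\tau$ and $\alpha\to\alpha_\infty$ finite force $\tau_\infty=0$, otherwise $\alpha$ would diverge linearly. Second, $\tau'=a\tau\eta+\alpha$ then has limit $\tau'\to\alpha_\infty$; if $\alpha_\infty>0$, $\tau$ would grow linearly and contradict $\tau\to 0$, so $\alpha_\infty=0$. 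But $\alpha(s)\ge\alpha(\overline{s})>0$ for all $s\ge\overline{s}$: indeed $\alpha(\overline{s})\ge 1$ in the $H$-case, and $\alpha(\overline{s})>0$ in the $C$-case since $(0,0,0)\notin C$. This contradicts $\alpha_\infty=0$, so $\alpha$ is unbounded and hence $\alpha(s)\to+\infty$ as $s\to\omega_+$. The same argument on $(\omega_-,\overline{s}]$, where $\tau<0$ and $\eta$ is monotone increasing as $s\to\omega_-$, yields $\alpha(s)\to+\infty$ as $s\to\omega_-$.

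The delicate point is the two-step limit analysis, where one must rule out the orbit approaching some non-stationary asymptotic behavior. The explicit relations $\alpha'=\tau$ and $\tau'=a\tau\eta+\alpha$ convert any hypothetical nonzero limit of $\tau$ or $\alpha$ into linear growth, so both limits must vanish; this is essentially the nonexistence of equilibria in $H\cup C$ (Lemma \ref{l1}) made quantitative, and combined with $\alpha(\overline{s})>0$ it closes the contradiction.
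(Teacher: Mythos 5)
Your proof is correct and follows essentially the same route as the paper's: assume $\alpha$ is bounded on one side of its global minimum, deduce that the trajectory stays in a compact set, converges, and that the limit must be an equilibrium of \eqref{s1}, then derive a contradiction. The only difference is in the final step, where the paper cites Lemma \ref{l1} (no singular/trivial solutions in $H\cup C$) while you explicitly extract $\tau_\infty=\alpha_\infty=0$ and contradict $\alpha(s)\ge\alpha(\overline{s})>0$; your version is in fact slightly more careful at the point where the limit could be the origin, which lies in the closure of $C$ but not in $C$ itself.
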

\begin{proof} Let $s_0$ be the global minimum point of $\alpha(s)$. Thus, $\alpha(s)$ is monotone on the intervals $(\omega_{-},s_{0}]$ and $[s_{0}, \omega_{+})$. Assume by contradiction that $\alpha(s)$ is bounded on the intervals $(\omega_{-},s_{0}]$ and $[s_{0}, \omega_{+})$. Since the functions $\alpha(s)$, $\tau(s)$ and $\eta(s)$ satisfy $\tau^{2}(s)+\eta^{2}(s)=\delta + \alpha^2(s)\leq\alpha^2(s)$, where $\delta \in \{-1,0\}$, then the functions $\alpha(s)$ and $\eta(s)$ are bounded and monotone on $(\omega_{-},s_{0}]$ and $[s_{0}, \omega_{+})$. The limits $\displaystyle \lim_{s \to \omega_{-}}\tau(s)$ and $\displaystyle \lim_{s \to \omega_{+}}\tau(s)$ exist, because $\tau^{2}(s)=\delta+\alpha^{2}(s)-\eta^{2}(s)$. Thus, there are points $p_1$ and $p_2 \in H\cup C$ such that $\displaystyle \lim_{s \to \omega_{-}}(\alpha(s),\tau(s),\alpha(s))=p_1$ and $\displaystyle \lim_{s \to \omega_{+}}(\alpha(s),\tau(s),\alpha(s))=p_2$. Hence, $\omega_{-}=-\infty$, $\omega_{+}=+\infty$ and $\{p_1,p_2\}$ is a set of singular (trivial) solutions of \eqref{s1}, which contradicts  Lemma \ref{l1}. 
	
	Therefore, we conclude that the function $\alpha(s)$ is unbounded on the intervals $(\omega_{-},s_{0}]$ and $[s_{0}, \omega_{+})$, and moreover $\displaystyle \lim_{s \to \omega_{-}}\alpha(s)=\lim_{s \to \omega_{+}}\alpha(s)=\infty$.
\end{proof}

\begin{lemma}\label{l6}
	Let $\psi(s)=(\alpha(s),\tau(s),\eta(s))$ be a solution of \eqref{s1} defined on the maximal interval $I=(\omega_{-},\omega_{+})$, $a>0$ and initial condition $\psi(0) \in H\cup C $, where $H$ and $C$ are given by \eqref{c}.
	If $\alpha(s)$ has one critical point, then $\displaystyle \lim_{s \to \omega_{-}}\eta(s)=\infty$ and $\displaystyle \lim_{s \to \omega_{+}}\eta(s)=-\infty$.
\end{lemma}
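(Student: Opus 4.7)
The plan is to argue by contradiction on each endpoint, exploiting the conservation law \mbox{$\tau^2(s)+\eta^2(s)=\alpha^2(s)+\delta$} with $\delta\in\{-1,0\}$ together with Lemma \ref{l5}. Since $\eta'(s)=-a\tau^2(s)\le 0$, the function $\eta$ is monotonically decreasing on $I$, so both one-sided limits $L_{\pm}=\lim_{s\to\omega_{\pm}}\eta(s)$ exist in the extended reals, with $L_{-}\ge\eta(0)\ge L_{+}$. I want to show $L_{-}=+\infty$ and $L_{+}=-\infty$.

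For the endpoint $\omega_{+}$, suppose by contradiction that $L_{+}>-\infty$. Then $\eta$ is bounded on $[s_0,\omega_{+})$ by some constant $M$, where $s_0$ is the critical point of $\alpha$. By Lemma \ref{l5}, $\alpha(s)\to +\infty$ as $s\to\omega_{+}$, and since $s_0$ is the unique critical point of $\alpha$ (a global minimum by Lemma \ref{l2}), $\alpha$ is strictly increasing on $[s_0,\omega_{+})$, so $\tau=\alpha'>0$ there. The conservation law gives $\tau^2=\alpha^2-\eta^2+\delta\ge\alpha^2-M^2+\delta$, and hence, once $\alpha$ is large enough, $\tau\ge\alpha/2$. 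This differential inequality $\alpha'\ge\alpha/2$ produces exponential growth of $\alpha$ and in particular rules out finite-time blow-up, so $\omega_{+}=+\infty$. On the other hand, integrating $\eta'=-a\tau^2$ yields
\begin{equation*}
\eta(s_0)-L_{+}=a\int_{s_0}^{+\infty}\tau^2(s)\,ds,
\end{equation*}
which should be finite, yet $\tau^2(s)\ge\alpha^2(s)/4\to+\infty$ forces the integral to diverge—contradiction. Therefore $L_{+}=-\infty$.

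The argument at $\omega_{-}$ is symmetric. On $(\omega_{-},s_0]$ the function $\alpha$ is strictly decreasing, so $\tau=\alpha'<0$ there. If $L_{-}<+\infty$, then $\eta$ is bounded on $(\omega_{-},s_0]$, the same use of the conservation law gives $|\tau|\ge\alpha/2$ for $\alpha$ large, i.e. $\alpha'\le-\alpha/2$, which makes $\alpha$ grow exponentially as $s$ decreases and forces $\omega_{-}=-\infty$. Then
\begin{equation*}
L_{-}-\eta(s_0)=a\int_{-\infty}^{s_0}\tau^2(s)\,ds
\end{equation*}
must be finite while $\tau^2\ge\alpha^2/4\to+\infty$ makes the integral diverge, a contradiction. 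Hence $L_{-}=+\infty$.

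The main obstacle is the step extracting $|\tau|\gtrsim\alpha$ from bounded $\eta$ and $\alpha\to\infty$; once this is in hand, the conclusion follows by combining the Grönwall-type estimate (to rule out finite-time exit of $s$ and thus ensure the domain of integration is unbounded in the unwanted direction) with the identity $\eta'=-a\tau^2$ (to trade a finite change of $\eta$ for a finite $L^2$-norm of $\tau$). The only subtlety is that near the critical point itself $\tau$ is small, which is why the estimate $\tau\ge\alpha/2$ must be stated only for $s$ sufficiently close to the endpoint and the integration split accordingly.
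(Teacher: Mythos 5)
Your overall strategy --- contradiction at each end, using the monotonicity of $\eta$, the conservation law and Lemma \ref{l5} --- is sound, but one step is justified incorrectly and, as written, the argument does not close. Your contradiction requires the domain of integration to be infinite: divergence of $\int a\tau^2\,ds$ is deduced from the fact that the integrand tends to $+\infty$, which proves nothing on a finite interval. To secure $\omega_+=+\infty$ you invoke the lower bound $\alpha'\ge\alpha/2$, saying it ``rules out finite-time blow-up.'' That is backwards: a lower bound on the growth of $\alpha$ cannot prevent the solution from escaping to infinity in finite time; only an upper bound can. The fact you need is true, but for a different reason: on $H\cup C$ one has $\tau^2\le\tau^2+\eta^2=\alpha^2+\delta\le\alpha^2$, so $|\alpha'|\le\alpha$, hence $\alpha$ grows at most exponentially, and since $|\tau|$ and $|\eta|$ are in turn bounded by $\alpha$ the solution cannot leave every compact set in finite time; this gives $\omega_\pm=\pm\infty$. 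Alternatively --- and more in keeping with your own estimate --- you can bypass the question entirely: once $\tau\ge\alpha/2>0$ holds on $[s_1,\omega_+)$, you have $a\tau^2\ge\tfrac{a}{2}\,\alpha\,\alpha'$, so
\[
\eta(s_1)-\eta(s)=a\int_{s_1}^{s}\tau^2\,dt\ \ge\ \frac{a}{4}\bigl(\alpha^2(s)-\alpha^2(s_1)\bigr)\ \longrightarrow\ +\infty,
\]
which contradicts the boundedness of $\eta$ with no hypothesis on whether $\omega_+$ is finite.

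For comparison, the paper runs the estimate in the opposite direction: assuming $\eta$ bounded, the conservation law together with Lemma \ref{l5} forces $|\tau|\to\infty$, so eventually $a\tau^2>|\tau|$, i.e.\ $\eta'=-a\tau^2$ dominates $\alpha'=\tau$ pointwise near the end in question; integrating yields $\alpha(s)\le\eta(s)+\mathrm{const}$ near $\omega_-$ (and the analogous bound near $\omega_+$), so $\alpha$ would be bounded, contradicting Lemma \ref{l5}. That pointwise comparison is insensitive to whether the endpoint is finite, which is exactly the issue your version trips over. With the repair indicated above, your proof is correct.
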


\begin{proof}
	Let $s_{0}$ be the global minimum point of $\alpha(s)$. Then $\tau(s)<0$ for all $s<s_{0}$ and $\tau(s)>0$ for all $s>s_{0}$. Moreover, $\alpha(s)$ is unbounded and monotone on the intervals $(\omega_{-},s_{0}]$ and $[s_{0}, \omega_{+})$.
	
	Assume by contradiction that the function $\eta(s)$ is bounded on $(\omega_{-},s_{0}]$. Since  $\tau^{2}(s)+\eta^{2}(s)=\delta+\alpha^{2}(s)$, where $\delta \in \{-1,0\}$, then it follows from Lemma \ref{l4} that the function  $\tau(s)$ is unbounded and negative on $(\omega_{-},s_{0})$ i.e. there exists $s_{1} \in (\omega_{-},s_{0}]$ such that $a\tau(s)<-1$ and $ -a\tau^2(s)<\tau(s)$ for all $s \in (\omega_{-},s_{1}]$. Thus, using \eqref{s1} for each $s \in (\omega_{-},s_{1}]$, we obtain
	
	\begin{equation*}
	\alpha(s)-\alpha(s_1)=-\int_{s}^{s_{1}}\tau(s)ds<\int_{s}^{s_{1}}a\tau^{2}(s)ds=\eta(s)-\eta(s_{1}),
	\end{equation*}
	that is,
	\begin{equation*}
	\alpha(s)<\eta(s)-\eta(s_{1})+\alpha(s_{1}),
	\end{equation*}
	for each $s \in (\omega_{-},s_{1}]$ which contradicts  Lemma \ref{l5}. Therefore, $\eta(s)$ is unbounded on $(\omega_{-},s_{1}]$.
	
	In a similar way, we can prove that the function $\eta(s)$ is unbounded on $[s_{0}, \omega_{+})$.
	
	Since $\eta(s)$ is decreasing on $(\omega_{-},\omega_{+})$, it follows that $\displaystyle \lim_{s \to \omega_{-}}\eta(s)=\infty$ and $\displaystyle \lim_{s \to \omega_{+}}\eta(s)=-\infty$. 		
\end{proof}

\begin{lemma}\label{l7}
	Let $\psi(s)=(\alpha(s),\tau(s),\eta(s))$ be a  solution of \eqref{s1}, with $a>0$,  defined on the maximal interval $I=(\omega_{-},\omega_{+})$ and initial condition $\psi(0) \in H\cup C $, where $H$ and $C$ are given by \eqref{c}.
	If $\alpha(s)$ has one critical point, then the function $\displaystyle \tau(s)$ is unbounded on $I$ and it has only two critical points.
\end{lemma}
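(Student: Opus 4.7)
The plan is to split $I$ at the unique critical point $s_0$ of $\alpha$ (where $\tau(s_0)=0$, $\tau'(s_0)=\alpha(s_0)>0$) into the halves $I^{+}:=(s_0,\omega_{+})$ and $I^{-}:=(\omega_{-},s_0)$. By the symmetry $(s,\tau)\mapsto(-s,-\tau)$ of system~\eqref{s1}, I focus on $I^{+}$, where $\tau>0$ throughout.

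\textbf{Upper bound on critical points.} Any critical point of $\tau$ on $I^{+}$ must, by Lemma~\ref{l2}(ii), be a local maximum with $\tau>1/a$. Two such maxima would sandwich a local minimum of $\tau$, which the same lemma forbids on $\{\tau>0\}$. So $\tau$ has at most one critical point on each of $I^{\pm}$, at most two on $I$.

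\textbf{Existence of critical points and unboundedness.} I argue by contradiction: if $\tau$ has no critical point on $I^{+}$, then $\tau$ is strictly monotone increasing on $I^{+}$ and converges to some $L\in(0,+\infty]$. Using Lemmas~\ref{l5}, \ref{l6} ($\alpha,|\eta|\to\infty$), the conservation law $\alpha^{2}-\eta^{2}=\tau^{2}-\delta$ and the identities
\begin{equation*}
(\alpha-|\eta|)'=\tau(1-a\tau),\qquad \tau'=|\eta|\bigl(\alpha/|\eta|-a\tau\bigr),
\end{equation*}
I would rule out each possibility. The case $\tau<1/a$ throughout is incompatible with $(\alpha-|\eta|)'\geq 0$ (making $\alpha-|\eta|$ non-decreasing from the positive value $\alpha(s_0)-|\eta(s_0)|$) combined with the conservation identity $\alpha-|\eta|=(\tau^{2}-\delta)/(\alpha+|\eta|)\to 0$. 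So $\tau$ must cross $1/a$. If then $L<\infty$, conservation yields $\alpha/|\eta|\to 1$ and $\tau'\sim |\eta|(1-aL)$, which is $\pm\infty$ when $L\neq 1/a$ (contradicting $\tau\to L$), and the borderline $L=1/a$ is handled by the linearization below. If $L=+\infty$, then monotonicity $\tau'>0$ demands $\alpha/|\eta|>a\tau\to\infty$, yet $|\eta|'=a\tau^{2}\gg\tau=\alpha'$ for large $\tau$ forces $|\eta|$ to grow strictly faster than $\alpha$ and hence $\alpha/|\eta|\to 0$, a contradiction. Thus a critical point exists on $I^{+}$, and by symmetry one on $I^{-}$, for two on $I$. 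At each extremum, Lemma~\ref{l2}(ii) gives $|\tau|>1/a$, and iterating the same sign analysis of $\tau'=|\eta|(\alpha/|\eta|-a\tau)$ past the extrema (with $|\eta|\to\infty$) forces the values of $\tau$ to escape any a priori bound tied to the initial data — this is the unboundedness asserted in the statement.

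The main technical obstacle is the borderline case $L=1/a$. I resolve it by setting $\epsilon:=\tau-1/a$ and using $|\eta|(s)\sim s/a$ and $\alpha(s)-|\eta(s)|\sim (1-a\delta)/(2as)$ to obtain the linear ODE $\epsilon'+s\epsilon\sim C/s$ with $C=(1-a\delta)/(2a)>0$. Variation of parameters with integrating factor $e^{s^{2}/2}$, combined with the Laplace-type asymptotic $\int^{s}e^{t^{2}/2}t^{-1}dt\sim e^{s^{2}/2}s^{-2}$, then shows the unique decaying solution satisfies $\epsilon\sim C/s^{2}>0$; this is incompatible with $\tau$ approaching $1/a$ monotonically from below, closing the contradiction.
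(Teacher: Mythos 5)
There is a genuine problem here, although part of it is not your fault: the word \emph{unbounded} in the statement is a typo. The paper's own proof of this lemma ends by concluding that $\tau(s_1)\leq\tau(s)\leq\tau(s_2)$ for all $s\in I$, i.e.\ that $\tau$ is \emph{bounded}, and Lemma \ref{lb8} immediately afterwards invokes Lemma \ref{l7} precisely to get boundedness of $\tau$. Your closing step, in which you try to manufacture unboundedness by ``iterating the sign analysis past the extrema,'' is therefore not merely vague but false: once you have established (correctly) that $\tau$ has exactly one critical point on $(s_0,\omega_+)$, necessarily a local maximum at some $s_2$, and that $\tau>0$ on that half-interval (any further zero of $\tau$ would be a second critical point of $\alpha$), you get $0<\tau(s)\leq\tau(s_2)$ there, and symmetrically $\tau(s_1)\leq\tau(s)<0$ on $(\omega_-,s_0)$. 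Your own count of critical points thus forces $\tau$ to be bounded; the correct reaction was to flag the discrepancy with the statement, not to assert an escape to infinity that the dynamics do not permit.

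On the remaining content: your upper bound (at most one critical point per half-interval, via Lemma \ref{l2}(ii)) is exactly the paper's. Your existence argument is in the same spirit as the paper's --- both play the functions $\alpha\pm\eta$, whose product $\alpha^2-\eta^2=\tau^2-\delta$ is controlled, against $\lim\alpha=+\infty$ from Lemma \ref{l5} --- but the paper's route is cleaner and fully rigorous: assuming $\tau'\neq 0$ on $I$, it first shows $|a\tau|<1$ on each half by a limit inequality exploiting $|\eta|\to\infty$ (Lemma \ref{l6}), and then observes that $f=\alpha+\eta$ and $g=\alpha-\eta$ are positive and monotone with $fg=-\delta+\tau^2$ bounded, hence each is bounded, hence $\alpha=\tfrac12(f+g)$ is bounded, contradicting Lemma \ref{l5}. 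Your sub-case analysis is partly redundant (if $\tau$ is increasing and crosses $1/a$, then $L>1/a$ strictly, so the borderline case $L=1/a$ cannot arise after a crossing) and the manipulations with $\sim$ in the $L<\infty$, $L=+\infty$ and linearization steps are heuristic asymptotics, not proofs; they would need to be replaced by honest inequalities of the type the paper uses.
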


\begin{proof}
	Let $s_{0} \in I$ be the global minimum point of $\alpha(s)$. The arguments consist in studying  the existence and the properties of the critical points of $\tau(s)$.
	
	{\it Claim}. If $\tau(s)$ does not have any critical point on $I$, then $-1<a\tau(s)<0 $ on $(\omega_{-},s_{0})$ and $0< a\tau(s)<1$ on $(s_{0}, \omega_{+})$. In fact, suppose that $\tau^{\prime}(s)\neq0$ for all $s \in I$. At  $s_0$, $\tau(s_{0})=0$ and $\tau^{\prime}(s_{0})=\alpha(s_{0})>0$. Moreover, $\tau^{\prime}(s)=a\tau(s)\eta(s)+\alpha(s)>0$ i.e. $\tau(s)$ is an increasing function on $I$, $\displaystyle \lim_{s \to \omega_{-}}\tau(s)\neq 0$ and $\displaystyle \lim_{s \to \omega_{+}}\tau(s)\neq0$. It follows from Lemma \ref{l6} that $\displaystyle \lim_{s \to \omega_{-}}\eta(s)=+\infty$ and $\displaystyle \lim_{s \to \omega_{+}}\eta(s)=-\infty$. We also know that $\tau(s)$ is negative on $(\omega_{-},s_0)$ and positive on $(s_0,\omega_{+})$. Thus, there are $s_{1} \in (\omega_{-},s_{0})$ and $s_{2} \in (s_{0}, \omega_{+})$ such that $-\alpha(s)<a\tau(s)\eta(s)<0$ for all $s \in I\setminus[s_{1},s_{2}]$. Hence, $-\alpha^{2}(s)<-a^{2}\tau^{2}(s)\eta^{2}(s)$ for all $s \in I\setminus[s_{1},s_{2}]$ and
	
	\begin{equation*}
	\delta=-\alpha^{2}(s)+\tau^{2}(s)+\eta^{2}(s)<-a^{2}\tau^{2}(s)\eta^{2}(s)+\tau^{2}(s)+\eta^{2}(s),
	\end{equation*}
	i.e.,
	\begin{equation*}
	a^{2}\tau^{2}(s)\eta^{2}(s)<-\delta+\tau^{2}(s)+\eta^{2}(s). 	
	\end{equation*}
	Therefore, 
	\begin{equation*}
	1<\frac{-\delta}{a^{2}\eta^{2}(s)\tau^{2}(s)}+\frac{1}{a^2\eta^{2}(s)}+\frac{1}{a^{2}\tau^{2}(s)}
	\end{equation*}
	for all $s \in I\setminus[s_{1},s_{2}]$. 
	
	Taking the limit when $s \to \omega_{+}$ and $s \to \omega_{-}$, using the fact that $\tau(s)$ is increasing and  Lemma \ref{l6}, we obtain that 
	
	\begin{equation*}
	\lim_{s \to \omega_{+}} \frac{1}{a^{2}\tau^{2}(s)}>1\hspace{1cm}\text{and}\hspace{1cm}\lim_{s \to \omega_{-}} \frac{1}{a^{2}\tau^{2}(s)}>1,
	\end{equation*}
	i.e.,
	\begin{equation*}
	\lim_{s \to \omega_{+}} a^{2}\tau^{2}(s)<1\hspace{1cm}\text{and}\hspace{1cm}\lim_{s \to \omega_{-}} a^{2}\tau^{2}(s)<1.
	\end{equation*}
	Thus, using that $\tau(s)$ is increasing, we conclude that $-1<a\tau(s)<0$ on $(\omega_{-},s_{0})$ and $0<a\tau(s)<1$ on $(s_{0}, \omega_{+})$. This proves our Claim. 
	
	Still assuming that $\tau(s)$ does not have any critical point, we define the positive functions $f(s)=\alpha(s)+\eta(s)$ and $g(s)=\alpha(s)-\eta(s)$ (observe that $-\alpha^{2}(s)+\tau^{2}(s)+\eta^{2}(s)=\delta \leq0$). Taking the derivatives of $f$ and $g$ and using \eqref{s1}, we obtain $f^{\prime}(s)=\tau(s)[1-a\tau(s)]$ and $g^{\prime}(s)=\tau(s)[1+a\tau(s)]$. It follows from our Claim that the functions $f$ and $g$ are decreasing when $\tau(s)<0$ and they are increasing when $\tau(s)>0$ and $$\displaystyle 0<f(s)\cdot g(s)=-\delta+\tau^{2}(s)<\frac{-\delta a^{2}+1}{a^{2}}$$ for all $s \in I$, where $\delta \in \{-1,0\}$. Hence, the functions $f$ and $g$ are positive, monotone and bounded on the interval $(\omega_{-},s_{0})$. Similarly  one shows that the functions $f$ and $g$ are positive, monotone and bounded on $(s_{0}, \omega_{+})$. Thus, we conclude that there exist  $M_1,M_2 \in \mathbb{R}$ such that 
	\begin{equation*}
	\left\{\begin{array}{ll}
	\alpha(s)+\eta(s)\leq M_{1},\\  
	\alpha(s)-\eta(s)\leq M_{2},
	\end{array}\right.	
\qquad 	\forall\; s \in (\omega_{-},s_{0}) \cup (s_{0}, \omega_{+}).
	\end{equation*} 	 	
	Hence,
	\begin{equation*}
	2\alpha(s)\leq M_{1}+M_{2}\,\,\forall\,\, s\,\, \in\,\,(\omega_{-},s_{0})\,\, \text{and}\,\, 2\alpha(s)\leq M_{1}+M_{2}\,\,\forall\,\, s\,\, \in\,\, (s_{0}, \omega_{+}).
	\end{equation*}
	 
	These inequalities contradict  Lemma \ref{l5}. Hence, the function $\tau(s)$ has at least one critical point on each interval $(\omega_{-},s_{0})$ and $(s_{0}, \omega_{+})$. From item {\it ii)} of Lemma \ref{l2} we have that $\tau(s)$ has only one local minimum $ s_1 \in (\omega_{-},s_{0})$ and it has only one local maximum $ s_2 \in (s_{0}, \omega_{+})$ i.e. $\tau(s_{1})\leq\tau(s)\leq \tau(s_{2})$ for all $s \in I$ and $\tau(s)$ is bounded on the interval $I$. 	This concludes the proof.	
\end{proof}

\begin{lemma}\label{lb8}
	Let $\psi(s)=(\alpha(s),\tau(s),\eta(s))$ be a  solution of \eqref{s1} defined on the maximal interval $I=(\omega_{-},\omega_{+})$, $a>0$ and initial condition $\psi(0) \in H\cup C $, where $H$ and $C$ are given by \eqref{c}. Then $I=\R$.  
\end{lemma}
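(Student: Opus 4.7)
The plan is to prove that the solution $\psi(s)$ cannot escape in finite time, so its maximal interval $I$ must be all of $\mathbb{R}$. The starting point is the conservation law from Proposition \ref{p1}: on $H \cup C$ we have $-\alpha^2(s) + \tau^2(s) + \eta^2(s) = \delta$ with $\delta \in \{-1,0\}$ throughout $I$. This immediately yields the pointwise bounds $|\tau(s)|, |\eta(s)| \le \sqrt{\alpha^2(s) + 1}$ for every $s \in I$, so controlling $\alpha$ controls the whole trajectory.

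Next, I would exploit the first equation $\alpha'(s) = \tau(s)$ together with the above bound on $|\tau|$ to obtain $|\alpha'(s)| \le \sqrt{\alpha^2(s) + 1}$. Rewriting this as $\left|\dfrac{d}{ds}\sinh^{-1}(\alpha(s))\right| \le 1$ and integrating from $0$ to $s$ gives
\begin{equation*}
\alpha(s) \le \sinh\bigl(|s| + \sinh^{-1}(\alpha(0))\bigr) \qquad \forall\, s \in I.
\end{equation*}
Thus $\alpha$, and hence $|\tau|$ and $|\eta|$, stay bounded on every bounded subinterval of $I$.

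Finally, I would invoke the standard ODE continuation principle: since the right-hand side of \eqref{s1} is smooth (indeed polynomial) and $\psi$ remains bounded on every bounded subinterval of the maximal interval $(\omega_-, \omega_+)$, one cannot have $\omega_+ < +\infty$ or $\omega_- > -\infty$, for otherwise $|\psi(s)|$ would be forced to blow up as $s$ approached that endpoint. Therefore $I = \mathbb{R}$.

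There is no real technical obstacle beyond writing down the a priori bound; the key observation is simply that the conservation law forces $|\tau|$ and $|\eta|$ to grow at most as fast as $\alpha$, after which $\alpha' = \tau$ controls the growth of $\alpha$ itself by a $\sinh$-type estimate that precludes finite-time blowup. Note that this argument is uniform in whether $\alpha$ has a critical point or not, so it handles the cases from Lemmas \ref{l3} and \ref{l5}--\ref{l7} simultaneously.
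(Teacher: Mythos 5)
Your proof is correct, and it takes a genuinely different and more economical route than the paper's. The paper first establishes that $\tau$ is \emph{globally} bounded on $I$ (via Lemma \ref{l3} when $\alpha$ is monotone and Lemma \ref{l7} when $\alpha$ has a critical point), deduces the linear growth estimate $|\alpha(s)-\alpha(s_0)|\leq M|s-s_0|$, and then combines this with the divergence $\alpha(s)\to+\infty$ at any finite endpoint (Lemmas \ref{l3} and \ref{l5}) to force $\omega_{\pm}=\pm\infty$; the proof of Lemma \ref{lb8} is thus a short corollary of a chain of earlier qualitative lemmas and must split into cases according to whether $\alpha$ has a critical point. You instead extract everything from the conservation law: since $\delta\leq 0$ on $H\cup C$ one even has $\tau^{2}+\eta^{2}\leq\alpha^{2}$, so your bound $|\alpha'|=|\tau|\leq\sqrt{\alpha^{2}+1}$ holds, the substitution $u=\sinh^{-1}\alpha$ gives $|u'|\leq 1$, and the resulting $\sinh$-type a priori bound keeps $\psi$ in a compact set on every bounded subinterval, whence the continuation principle applies. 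This is self-contained, requires no case analysis and none of Lemmas \ref{l3}--\ref{l7}, and — as you note — the same estimate $|\tau|,|\eta|\leq\sqrt{\alpha^{2}+1}$ holds verbatim when $\psi(0)\in S$, so your argument would also dispose of Lemma \ref{ld7} at no extra cost. What the paper's route buys in exchange is that the boundedness of $\tau$ and the limiting behaviour of $\alpha$ and $\eta$ are needed anyway for the curvature asymptotics in Lemma \ref{coro1}, so deriving global existence from them costs the authors essentially nothing once those lemmas are in place.
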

\begin{proof}
	 From Lemmas \ref{l3} and \ref{l7} we have that $\tau(s)$ is bounded. Let $M>0$ be such that $|\tau(s)|\leq M$, for all $s \in I$. Using \eqref{s1}, we obtain,
	\begin{equation}\label{34}
	|\alpha(s)-\alpha(s_{0})|=\left|\int_{s_{0}}^{s}\tau(s)ds\right|\leq M|s-s_{0}|
	\end{equation}
	for each $s \in I$. If $\alpha(s)$ has a global minimum point, then it follows from Lemma \ref{l5} that $\displaystyle \lim_{s \to \omega_{-}}\alpha(s)=\lim_{s \to \omega_{+}}\alpha(s)=\infty$. Hence, $I=\R$.		
	
	If $\alpha(s)$ does not have any critical point and $\tau(s)<0$ (resp. $\tau(s)>0$) for all $s \in I$, then it follows from Lemma \ref{l3} that $\omega_{+}=+\infty$ and $\displaystyle \lim_{s \to \omega_{-}}\alpha(s)=+\infty$ (resp. $\omega_{-}=-\infty$ and $\displaystyle \lim_{s \to \omega_{+}}\alpha(s)=+\infty$). Using \eqref{34}, we conclude that  $\omega_{-}=-\infty$ (resp. $\omega_{+}=+\infty$). Therefore, $I=\R$.
\end{proof}

We will now study the solutions of the system \eqref{s1}, with initial conditions on $S$. 
We will first classify the singular points of the system that are in the set  $S$.

\begin{lemma}\label{ld1}
	Let  $\Phi:S\rightarrow TS \subset \R^{3}$ be the differential vector field given by 
	$$
	\Phi(\alpha,\tau,\eta)=\left(\tau,a\tau\eta+\alpha,-a\tau^{2}\right),
	$$
	where $a>0$. Then $p=(0,0,1)$ and $-p=(0,0,-1)$ are the singular points of $\Phi$ and the eigenvalues of $d\Phi_{p}$ and $d\Phi_{-p}$ are given respectively by  
	\begin{equation}\label{eigen} 
	\lambda_{p}=\frac{a\pm \sqrt{a^2+4}}{2}, \hspace{.5 cm}   \lambda_{-p}=\frac{-a\pm \sqrt{a^2+4}}{2}.
	\end{equation}
			\end{lemma}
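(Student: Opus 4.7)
The plan is first to locate the singular points by solving $\Phi(\alpha,\tau,\eta)=(0,0,0)$ subject to the constraint $-\alpha^{2}+\tau^{2}+\eta^{2}=1$. The third component of $\Phi$ gives $\tau=0$ immediately, which substituted into the second component forces $\alpha=0$, and then the constraint forces $\eta=\pm 1$. Hence the only singular points on $S$ are $p=(0,0,1)$ and $-p=(0,0,-1)$, as claimed.

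Next I would compute the Jacobian of $\Phi$ regarded as a smooth map $\mathbb{R}^{3}\to\mathbb{R}^{3}$, obtaining
\[
J\Phi(\alpha,\tau,\eta)=
\begin{pmatrix}
0 & 1 & 0\\
1 & a\eta & a\tau\\
0 & -2a\tau & 0
\end{pmatrix}.
\]
Because $\Phi$ is tangent to $S$, the differential $d\Phi$ at a singular point preserves the tangent space of $S$, so eigenvalues computed in $\mathbb{R}^{3}$ decompose into those acting on $T_{p}S$ plus a trivial eigenvalue in the normal direction. At $p=(0,0,1)$ the gradient of the constraint function $F(\alpha,\tau,\eta)=-\alpha^{2}+\tau^{2}+\eta^{2}$ points in the $\eta$ direction, so $T_{p}S$ is the $(\alpha,\tau)$-plane. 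The restriction of $J\Phi(0,0,1)$ to this plane is the $2\times 2$ block
\[
\begin{pmatrix} 0 & 1\\ 1 & a\end{pmatrix},
\]
whose characteristic polynomial $\lambda^{2}-a\lambda-1=0$ yields the eigenvalues $\lambda_{p}=(a\pm\sqrt{a^{2}+4})/2$.

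The analysis at $-p=(0,0,-1)$ is identical: $T_{-p}S$ is again the $(\alpha,\tau)$-plane, and evaluating $J\Phi$ there yields the block
\[
\begin{pmatrix} 0 & 1\\ 1 & -a\end{pmatrix},
\]
with characteristic polynomial $\lambda^{2}+a\lambda-1=0$ and eigenvalues $\lambda_{-p}=(-a\pm\sqrt{a^{2}+4})/2$. There is no real obstacle here; the statement is essentially a computation once the singular points are isolated. The only mild subtlety is keeping track of the fact that $d\Phi_{p}$ is being viewed as an endomorphism of the two-dimensional tangent space $T_{p}S$, which is why only two eigenvalues appear in \eqref{eigen} (the third eigenvalue of the ambient Jacobian is $0$ and corresponds to the normal direction to $S$, which is irrelevant for the dynamics on $S$).
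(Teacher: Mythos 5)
Your proposal is correct and follows essentially the same route as the paper: solve $\Phi=0$ to isolate $(0,0,\pm1)$, compute the same $3\times3$ Jacobian, restrict to the tangent plane $\{\eta=0\}$ at each singular point, and read off the eigenvalues from the characteristic polynomial $\lambda^{2}\mp a\lambda-1=0$. Your extra remark about the third (zero) eigenvalue in the normal direction is a harmless elaboration of what the paper leaves implicit.
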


\begin{proof}
	Note that, if $\Phi(\alpha,\tau,\eta)=0$, then $\alpha=\tau=0$ and $\eta=\pm 1$. Hence, $p=(0,0,1)$ and $-p=(0,0,-1)$ are the singular points of $\Phi$. The tangent plane in each singular point is defined by $T_{\pm p}S\approx\{(\alpha,\tau,\eta) \in \R^{3}: \eta=0\}$. Thus,
	\begin{equation*}
	d\Phi=\left(\begin{array}{ccc}
	0                       & 1          & 0  \\
	1           & a\eta  & a\tau \\
	0                       & -2a\tau &  0
	\end{array}\right).
	\end{equation*}
	Hence $\lambda$ is an eigenvalue of $d\Phi_{\pm p}$ if there is a non null 
	vector $w=(w_1,w_2,0) \in T_{\pm p} S$ such that $d\Phi_{\pm p}(w)=\lambda w$
	 i.e.
	%\begin{equation*}
	%\left(\begin{array}{ccc}
	%0                       & 1          & 0  \\
	%1                      & \pm a  & 0 \\
	%0                       & 0 &  0
	%\end{array}\right)\left(\begin{array}{lll}
	%w_{1}\\
	%w_{2}\\
	%0                      
	%\end{array}\right)=\lambda (w_{1},w_{2},0).
	%\end{equation*}
	\begin{equation*}
	\left\{\begin{array}{cc}
	w_{2}=\lambda w_{1},\\
	w_{1}\pm aw_{2}=\lambda w_{2}.
	\end{array}\right. 
	\end{equation*}
	Hence, $\lambda$ satisfies $\lambda^{2} \mp a\lambda-1=0$, which gives \eqref{eigen}.
	
%	\begin{equation*} 
%	\lambda_{p}=\frac{a\pm \sqrt{a^2+4}}{2}\hspace{0.5 cm} \text{e} \hspace{0.5 cm} %\lambda_{-p}=\frac{-a\pm \sqrt{a^2+4}}{2}
%	\end{equation*}
\end{proof}

In  Lema \ref{ld1}, we saw that $(0,0,\pm 1)$  are saddle points 
for the vector field  $\Phi$ on $S$, i.e., $\psi(s)=(0,0,\pm1)$, $s \in I$ 
are singular solutions of \eqref{s1}. If the functions $\alpha$ and $\tau$ are identically 
zero then the corresponding  curve  $X(s)$ is the intersection of the upper half hyperboloid 
with the plane going through the origin, orthogonal to  $(0,0,1)$.  Hence both singular solutions of the system correspond to the same curve. 

In order to study the non trivial solutions of the system 
\eqref{s1}, we consider the singular point   $p=(0,0,1)$ and  $\psi(s,q)$ a solution of the system with initial condition $q\in S$. Since the eigenvalues of the linearized system at the singular point are not zero, it follows that the local behavior of the system  
(\ref{s1}) is equivalent to the linearized one. Hence, there exist initial conditions 
 $q, \overline{q} \in S\setminus \{p\}$ such that $\displaystyle \lim_{s \to -\infty}\psi(s,q)=p $ and  $ \displaystyle \lim_{s \to +\infty}\psi(s,\overline{q})=p $. 
We define the  unstable and stable sets as  

\begin{equation}\label{w}
\displaystyle W^{u}(p)=\{q \in S: \lim_{s \to -\infty}\psi(s,q)=p\}\,\,\;\; \text{and }\,\,\;\;
\displaystyle W^{s}(p)=\{q\in S: \lim_{s \to +\infty}\psi(s,q)=p\}.
\end{equation}
From  Lemma \ref{l1} we know that, if the function $\tau$ is a non zero constant,i.e.,  $\tau(s)=b$, $b\in \R\setminus\{0\}$ for all $s \in I$, then $b^{2}=a=1$. Our next result provides  two non trivial solutions of the system \eqref{s1}, $a=b^{2}=1$, defined on $\R$, with initial conditions on the set $S$. They are particular cases of the solutions
obtained in  Lemma \ref{l1} with the constant of integration  being zero. Moreover, we also obtain the soliton solutions corresponding to these soltions.

\begin{proposition}\label{pex}
	Let $\psi(s)=(\alpha(s),\tau(s),\eta(s))$ be a solution of \eqref{s1} defined on the maximal interval $I$, $a=1$ and $\psi(0)\in S$, where $S$ is given in \eqref{c}. Then, $\psi(s)=(-s,-1,-s)$ (resp. $\overline{\psi}(s)=(s,1,-s)$), $s \in I=\R$ satisfy \eqref{s1} with initial condition $(0,-1,0)$ (resp. $(0,1,0)$). Moreover,
	\begin{itemize}
		\item[i)] The curve
		\begin{equation}\label{x}
		X(s)=\left(\frac{1+l^{2}+s^2}{2l}, \frac{l^2-1-s^2}{2l},-s\right),	
		\end{equation}
		where $l>0$ is the soliton solution to the CSF in $\mathbb{H}^{2}$ which corresponds to the solution \mbox{$\psi(s)=(-s,-1,-s)$} of \eqref{s1};
		\item[ii)] The curve $$\overline{X}(s)=\left(\frac{1+l^{2}+s^2}{2l}, \frac{l^2-1-s^2}{2l},s\right),$$ where $l>0$ is the soliton solution to the CSF in $\mathbb{H}^{2}$ which corresponds to the solution \mbox{$\psi(s)=(s,1,-s)$} of \eqref{s1}
	\end{itemize}
\end{proposition}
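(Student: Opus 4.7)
My plan is a sequence of direct verifications; the statement is essentially a concrete realization of the correspondence set up in Propositions \ref{p1} and \ref{p2}.

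First I would verify the opening assertion by substitution. With $a=1$ the system \eqref{s1} reduces to $\alpha'=\tau$, $\tau'=\tau\eta+\alpha$, $\eta'=-\tau^{2}$. Plugging in $\psi(s)=(-s,-1,-s)$ yields $-1=-1$, $0=(-1)(-s)+(-s)=0$, $-1=-1$, respectively; the initial value $(0,-1,0)$ satisfies $-\alpha^{2}+\tau^{2}+\eta^{2}=1$ and hence lies in $S$; and since the components are polynomials in $s$, the maximal interval is $\R$. The verification for $\overline{\psi}(s)=(s,1,-s)$ is identical with signs adjusted. Both are the special cases of the family in Lemma \ref{l1} obtained by taking the arbitrary additive constant $\alpha(0)$ equal to zero.

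For item (i), the strategy is to produce the candidate $X(s)$ and invoke Proposition \ref{p2}, which says the soliton corresponding to $\psi$ is determined (up to isometries of $\mathbb{H}^{2}$) by the identifications $\alpha(s)=\langle X(s),e_{3}\rangle$, $\tau(s)=\langle T(s),e_{3}\rangle$, $\eta(s)=\langle N(s),e_{3}\rangle$ with $e_{3}=(0,0,1)$, together with $k(s)=a\tau(s)$. I would therefore carry out five short verifications on the explicit $X(s)$: (a) check $-x_{1}^{2}+x_{2}^{2}+x_{3}^{2}=-1$, using $x_{1}^{2}-x_{2}^{2}=(x_{1}-x_{2})(x_{1}+x_{2})$ together with the fact that $x_{1}+x_{2}=l$ is independent of $s$; (b) differentiate to obtain $T(s)=(s/l,-s/l,-1)$ and confirm $\langle T,T\rangle=1$; (c) use the Frenet-type relation $T'=kN+X$ with the ansatz $k=-1$ to extract a closed form for $N(s)$; (d) verify $\langle N,N\rangle=1$ and $\langle T',N\rangle=-1=a\tau$; (e) confirm $\langle X,e_{3}\rangle=-s$, $\langle T,e_{3}\rangle=-1$, $\langle N,e_{3}\rangle=-s$. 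Proposition \ref{p2} together with Theorem \ref{c2t1} then yields that $X$ is the desired soliton.

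Item (ii) follows by a parallel computation for $\overline{X}(s)$: one obtains $\overline{T}(s)=(s/l,-s/l,1)$ and $\overline{k}=1=a\overline{\tau}$, with the inner products against $e_{3}$ matching $\overline{\psi}$. Alternatively, the isometry $(x_{1},x_{2},x_{3})\mapsto(x_{1},x_{2},-x_{3})$ of $\R^{3}_{1}$ preserves $\mathbb{H}^{2}$, sends $X$ to $\overline{X}$ and reverses $\tau$; since it is orientation-reversing it also flips the sign of $k$, so (ii) can be deduced from (i) by this symmetry. No step presents a real obstacle: the argument is computational. The only point needing care is consistency of the sign convention for $N=X\times T$ so that $k=\langle T',N\rangle$ comes out as $a\tau=\mp 1$, and recognizing that the parameter $l>0$ encodes the one-parameter freedom, left open by Proposition \ref{p2}, in the initial position $X(0)$ along the geodesic $\{x_{3}=0\}\cap\mathbb{H}^{2}$.
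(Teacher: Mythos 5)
Your proposal is correct and follows essentially the same route as the paper: direct substitution of $\psi$ and $\overline{\psi}$ into \eqref{s1}, followed by explicit computation of $T$, $N=X\times T$ and the inner products with $(0,0,1)$. In fact your signs are the internally consistent ones — differentiating the third component $-s$ of \eqref{x} gives $T(s)=(s/l,-s/l,-1)$ and hence $\alpha=-s$, $\tau=-1$, $k=-1$, matching $\psi(s)=(-s,-1,-s)$ as claimed in item (i), whereas the paper's displayed $T(s)=(s/l,-s/l,1)$ and $\tau=1$ in the proof of (i) actually verify item (ii); your reflection argument $(x_1,x_2,x_3)\mapsto(x_1,x_2,-x_3)$ cleanly relates the two cases.
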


\begin{proof}
	Straightforward computations show that $\psi(s)$ and $\overline{\psi}(s)$ satisfy \eqref{s1} with initial conditions $(0,-1,0)$ and $(0,1,0)$ respectively. 
	
	{\it i)}  Note that, if a curve $X(s)$ is given by \eqref{x}, then
	\begin{eqnarray*}
		T(s)&=&\left(\frac{s}{l},-\frac{s}{l},1\right),\\
		N(s)&=&X(s)\times T(s)=\left(\frac{-1+l^{2}+s^2}{2l}, \frac{l^2+1-s^2}{2l},-s\right),
	\end{eqnarray*}
	$\alpha(s)=\spn{X(s),(0,0,1)}=s$, $\tau(s)=\spn{T(s),(0,0,1)}=1$ and $\eta(s)=\spn{N(s),(0,0,1)}=-s$. 
	
	{\it ii)} The  proof is similar to the case {\it i)}.  			
\end{proof}

 In  Figure \ref{figs}, we illustrate the  curve $X(s)$ in  $\mathbb{H}^{2}$ 
given by  \eqref{x} with $l=1$.

\begin{figure}[h!]
	\centering
	\includegraphics[height=1.3in]{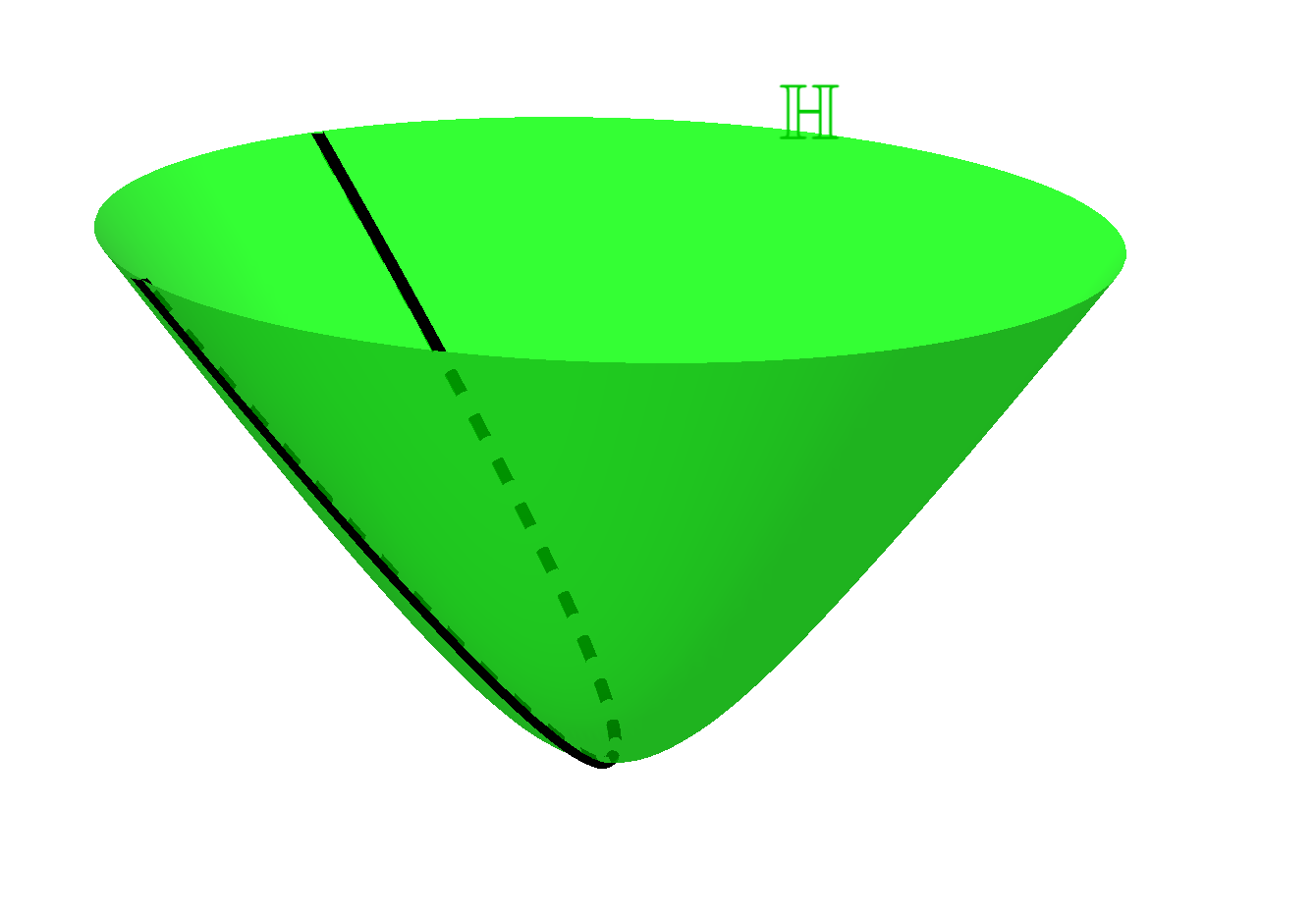}			
	\caption{Soliton solution to the CSF with $a=1$, fixed vector $v=(0,0,1)$ and constant curvature ($k(s)=1$).}
	\label{figs}
\end{figure}

In the following lemmas, we study the behavior of the functions  $\alpha(s)$, $\tau(s)$ and  $\eta(s)$ when  $\psi(s)=(\alpha(s),\tau(s),\eta(s))$ is a non trivial solution of the system  
\eqref{s1} and initial condition $\psi(0)\in S$.

\begin{lemma}\label{ld2}
	Let $\psi(s)=(\alpha(s),\tau(s),\eta(s))$ be a non trivial solution of \eqref{s1} defined on the maximal interval $I=(\omega_{-},\omega_{+})$, $a>0$ and initial condition $\psi(0) \in S$, where $S$ is given in \eqref{c}. If $s_0 \in I$ is a critical point of $\alpha(s)$, then $s_{0}$ is the global minimum (resp. maximum) of $\alpha(s)$ if, and only if, $\alpha(s_{0})>0$ (resp. $\alpha(s_{0})<0$). Moreover, there exists always  $\overline{s}$ such that the function $\displaystyle \alpha(s)$ is monotone on the intervals $(\omega_{-},\overline{s})$ and $(\overline{s}, \omega_{+}).$
\end{lemma}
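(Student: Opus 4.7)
My plan rests on the identity $\alpha''(s_0)=\alpha(s_0)$ at any critical point $s_0$ of $\alpha$, which reduces the whole statement to the second-derivative test plus a uniqueness argument for critical points. The computation is immediate: $\alpha'(s_0)=\tau(s_0)=0$ combined with the second equation of \eqref{s1} gives $\alpha''(s_0)=\tau'(s_0)=a\tau(s_0)\eta(s_0)+\alpha(s_0)=\alpha(s_0)$. Before invoking the test I first rule out $\alpha(s_0)=0$: together with $\tau(s_0)=0$ and the invariant $-\alpha^2+\tau^2+\eta^2=1$ on $S$, this would force $\eta(s_0)=\pm 1$, so $\psi(s_0)=(0,0,\pm 1)$. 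By Lemma \ref{ld1} these are singular points of the vector field, and uniqueness of solutions to \eqref{s1} would then force $\psi\equiv(0,0,\pm 1)$, hence $\tau\equiv 0$, contradicting non-triviality.

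With $\alpha(s_0)\neq 0$ established, $s_0$ is a strict local minimum when $\alpha(s_0)>0$ and a strict local maximum when $\alpha(s_0)<0$. The \emph{only if} direction is immediate: a global minimum is a local minimum at a critical point, so $\alpha''(s_0)=\alpha(s_0)\geq 0$, combined with non-vanishing gives $\alpha(s_0)>0$; likewise for the maximum. For the \emph{if} direction I would show that a critical point with $\alpha(s_0)>0$ is the \emph{unique} critical point of $\alpha$ on $I$, from which globality follows. Suppose a second critical point exists and let $s_1>s_0$ be the nearest one to the right. On $(s_0,s_1)$ the function $\tau=\alpha'$ has no zero and is positive immediately to the right of $s_0$ (by the strict local minimum), so $\tau>0$ throughout $(s_0,s_1)$ and $\alpha$ is strictly increasing there. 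Then $\alpha(s_1)>\alpha(s_0)>0$, so $s_1$ is also a strict local minimum, forcing $\tau<0$ just to the left of $s_1$, contradicting $\tau>0$ on $(s_0,s_1)$. The symmetric argument excludes critical points in $(\omega_-,s_0)$, so $s_0$ is unique and hence the global minimum; the maximum case is analogous.

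For the \emph{moreover} statement, if $\alpha$ has a critical point I would take $\bar s$ to be that unique point, and the above shows $\alpha$ is strictly monotone on $(\omega_-,\bar s)$ and on $(\bar s,\omega_+)$ (with opposite monotonicities). If $\alpha$ has no critical point, then $\tau=\alpha'$ has constant sign on $I$, so $\alpha$ is monotone on all of $I$ and any choice of $\bar s\in I$ works. The step I expect to carry the weight is the uniqueness argument in the second paragraph; the rest is routine, relying only on the algebraic identity $\alpha''=\alpha$ at critical points and the exclusion of the singular points of the flow on $S$ via Lemma \ref{ld1}.
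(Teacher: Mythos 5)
Your proof is correct and follows essentially the same route as the paper's: the identity $\alpha''(s_0)=\alpha(s_0)$ at critical points, exclusion of $\alpha(s_0)=0$ via the singular points $(0,0,\pm 1)$ and non-triviality, and the consecutive-critical-points argument to show the critical point is unique and hence global. Your treatment is slightly more careful in spelling out the ``only if'' direction and in handling both signs $\eta(s_0)=\pm 1$, but these are refinements of the same argument, not a different one.
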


\begin{proof} Let $s_{0}$ be a critical point of $\alpha(s)$, then $\tau(s_0)=0$. It follows from \eqref{s1} that
	\begin{equation}\label{s_0}
	\alpha^{\prime \prime}(s_0)=\alpha(s_0).
	\end{equation}
	Note that, if $\alpha(s_0)=\tau(s_0)=0$ and $\eta(s_0)=1$, then $\alpha(s)$ is constant, because $(0,0,1)$ is a singular (trivial) solution of \eqref{s1}. 
	
	Therefore, it follows from \eqref{s_0} that $s_{0}$ is a local minimum point of $\alpha(s)$ if $\alpha(s_0)>0$. If there is another critical point $s_1$ of $\alpha(s)$ such that $s_0$ and $s_1$ are consecutive, then $\alpha(s_1)>\alpha(s_0)>0$, because $s_0$ is  a local minimum point. Thus, $\alpha^{\prime \prime}(s_1)=\alpha(s_1)>0$ and $s_1$ is a local minimum point $\alpha(s)$, this is a contradiction. Therefore, if $\alpha(s_0)>0$, then $s_0$  is a  global minimum of $\alpha(s)$.
	
	  If $\alpha(s_0)<0$, it follows from \eqref{s_0} that $s_{0}$ a local maximum point of $\alpha(s)$. The proof that $s_0$ is a global maximum point of $\alpha(s)$ is analogue to the previous case. 
	
	Since  the function $\alpha(s)$ has at  most one critical point, there  exists always   $\overline{s}$ such that the function $\displaystyle \alpha(s)$ is monotone on the intervals $(\omega_{-},\overline{s})$ and $(\overline{s}, \omega_{+}).$
\end{proof}	

We observe that if for a solution 
$\psi(s)=(\alpha(s),\tau(s),\eta(s))$ of \eqref{s1} with  $\psi(0)\in S$, the function $\alpha(s)$ does not have a crtitical point, then  $\alpha(s)$ is monotone on $I$. 
Moreover, it follows from  Lemma \ref{ld2}, that for any solution of  \eqref{s1} with $\psi(0) \in W^{u}\cup W^{s}\setminus \{(0,0,1)\}$, the functions $\alpha(s)$ and $\eta(s)$
do not have critical points. 
 
\begin{lemma}\label{ld4}
	Let $\psi(s)=(\alpha(s),\tau(s),\eta(s))$ be a non trivial solution of \eqref{s1} defined on the maximal interval $I=(\omega_{-},\omega_{+})$, $a>0$ and initial condition $\psi(0) \in S$, where $S$ is given by \eqref{c}. Consider $W^{u}(p)$ and $W^{s}(p)$ given by \eqref{w}. If $\psi(0) \in S\setminus W^{u}(p)$ (resp. $ S\setminus W^{s}(p)$), then $\displaystyle \lim_{s \to \omega_{-}}|\alpha(s)|=+\infty$ (resp. $\displaystyle \lim_{s \to \omega_{+}}|\alpha(s)|=+\infty$).   
\end{lemma}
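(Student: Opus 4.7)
The plan is to argue by contradiction. Suppose $|\alpha(s)|$ does not tend to $+\infty$ as $s \to \omega_{-}$, so $\alpha$ stays bounded on some interval $(\omega_{-}, \overline{s}\,]$. By Lemma \ref{ld2} the function $\alpha$ is monotone on that interval, hence $\alpha(s) \to \alpha_{-}$ for some finite $\alpha_{-}$. Combining the conservation identity $-\alpha^{2} + \tau^{2} + \eta^{2} = 1$ of Proposition \ref{p1} with the fact that $\eta$ is non-increasing, $\eta(s)$ has a limit in $\mathbb{R} \cup \{+\infty\}$; the value $+\infty$ is ruled out because it would force $\tau^{2} = 1 + \alpha^{2} - \eta^{2}$ to become negative. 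So $\eta(s) \to \eta_{-}$ finite, and then $\tau^{2}(s) \to 1 + \alpha_{-}^{2} - \eta_{-}^{2}$. Since $\tau = \alpha'$ has constant sign near $\omega_{-}$ (monotonicity of $\alpha$ plus non-triviality, which excludes $\tau \equiv 0$ by Lemma \ref{l1}), $\tau(s) \to \tau_{-}$ as well.

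Thus $\psi(s) \to \psi_{-} := (\alpha_{-}, \tau_{-}, \eta_{-}) \in S$ as $s \to \omega_{-}$. The trajectory stays in a compact subset of $S$ near $\omega_{-}$, so by the standard ODE continuation theorem $\omega_{-} = -\infty$. Since $\psi'(s) \to \Phi(\psi_{-})$ while $\psi(s)$ converges, one must have $\Phi(\psi_{-}) = 0$, so by Lemma \ref{ld1}, $\psi_{-} \in \{p, -p\}$. The case $\psi_{-} = p$ is exactly $\psi(0) \in W^{u}(p)$, contradicting the hypothesis. The alternative $\psi_{-} = -p$ is ruled out by the same boundedness/convergence mechanism applied to the time-reversed system; explicitly, the map $(\alpha(s), \tau(s), \eta(s)) \mapsto (\alpha(-s), -\tau(-s), -\eta(-s))$ is a symmetry of \eqref{s1}, interchanging $p \leftrightarrow -p$ and $W^{u} \leftrightarrow W^{s}$, so a backward-asymptotic convergence to $-p$ is translated into the already excluded equilibrium limit at $p$.

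The ``resp.'' statement at $\omega_{+}$ is proved by the identical chain of implications with time reversed: $|\alpha(s)|$ bounded as $s \to \omega_{+}$ yields, through the monotonicity of $\eta$ (which this time cannot decrease to $-\infty$ without violating $\tau^{2} \ge 0$), the convergence $\psi(s) \to \psi_{+} \in \{p, -p\}$; then $\psi_{+} = p$ is precisely $\psi(0) \in W^{s}(p)$, giving the desired contradiction.

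The main obstacle is the ODE compactness argument in the second paragraph: one has to assemble the boundedness of $\alpha$, the monotonicity of $\eta$, the constraint cutting out $S$ and the sign constancy of $\tau$ near the endpoint, and then turn this package into full convergence of $\psi$ to an equilibrium of $\Phi$. Once this convergence is secured, the rest is bookkeeping: identifying the limit among the two equilibria of $\Phi$ produced by Lemma \ref{ld1} and separating the two cases by the hypothesis on $\psi(0)$.
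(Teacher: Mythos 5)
Your overall strategy coincides with the paper's: assume $\alpha$ is bounded near $\omega_{-}$, combine the monotonicity of $\alpha$ (Lemma \ref{ld2}) and of $\eta$ with the constraint $\tau^{2}+\eta^{2}=\alpha^{2}+1$ to force convergence of $\psi$ to a point of $S$, deduce by continuation that $\omega_{-}=-\infty$ and that the limit must be a zero of $\Phi$, hence $p$ or $-p$ by Lemma \ref{ld1}, and finally exclude the limit. Everything up to and including the exclusion of $\psi_{-}=p$ is correct, and is in fact written more carefully than the paper's own proof, which simply asserts that ``the system does not have any singular solution on $S\setminus W^{u}(p)$'' and never mentions $-p$.

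The gap is in your disposal of the case $\psi_{-}=-p$. The map $R\colon \psi(s)\mapsto(\alpha(-s),-\tau(-s),-\eta(-s))$ is indeed a symmetry of \eqref{s1} exchanging $p$ and $-p$, but it converts backward convergence to $-p$ into \emph{forward} convergence to $p$ of the transformed solution, i.e.\ into membership in $W^{s}(p)$ --- and that is not excluded by the hypothesis $\psi(0)\in S\setminus W^{u}(p)$, nor by anything else, since $W^{s}(p)$ is a nonempty invariant set. So no contradiction is reached and the case $\psi_{-}=-p$ stays open. Moreover it cannot be closed as stated: by Lemma \ref{ld1} the point $-p$ is a hyperbolic saddle of $\Phi$ on $S$ (one positive and one negative eigenvalue), so its unstable manifold contains non-trivial orbits, and for any such orbit $\lim_{s\to\omega_{-}}|\alpha(s)|=0$, not $+\infty$. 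The conclusion is therefore correct only if one reads $W^{u}(p)$ in \eqref{w} as the unstable set of both singular points, or equivalently strengthens the hypothesis to $\psi(0)\notin W^{u}(p)\cup W^{u}(-p)$; your symmetry $R$ then shows that the orbits of $W^{u}(-p)$ are exactly the images of those of $W^{s}(p)$ and describe the same soliton curves with reversed orientation. The identical remark applies to the ``resp.''\ case, where $W^{s}(-p)$ must be excluded alongside $W^{s}(p)$. You have located a real subtlety that the paper glosses over, but your argument does not resolve it.
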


\begin{proof}
	It follows from Lemma \ref{ld2} that there exits $\overline{s} \in I$ such that $\alpha(s)$ is monotone on the intervals $(\omega_{-},\overline{s})$ and $(\overline{s}, \omega_{+})$. If $\psi(0) \in S\setminus W^{u}(p)$ assume by contradiction that $\alpha(s)$ is bounded  on $(\omega_{-},\overline{s})$. Since $\tau^{2}(s)+\eta^{2}(s)=\alpha^{2}(s)+1$, it follows that the functions $\alpha(s)$ and $\eta(s)$ are bounded and monotone on  $(\omega_{-},\overline{s})$ and the limit $\displaystyle \lim_{s \to \omega_{-}} \tau(s)$ exists.  
%	because $\tau^{2}(s)=-\eta^{2}(s)+\alpha^{2}(s)+1$. 
	Hence, there exists $q \in \R_{1}^{3}$ such that $\displaystyle \lim_{s \to \omega_{-}}(\alpha(s),\tau(s),\alpha(s))=q$, $\omega_{-}=-\infty$ and $q$ is a singular solution of \eqref{s1}. But the system \eqref{s1} does not have any singular solution on the set $S\setminus W^{u}(p)$. Therefore, $\displaystyle \lim_{s \to \omega_{-}}|\alpha(s)|=+\infty$.
	Similarly, when $\psi(0) \in S\setminus W^{s}(p)$	 one proves that $\displaystyle \lim_{s \to \omega_{+}}|\alpha(s)|=+\infty$.  
\end{proof}

\begin{lemma}\label{ld2a}
	Let $\psi(s)=(\alpha(s),\tau(s),\eta(s))$ be a non trivial solution of \eqref{s1} defined on the maximal interval $I=(\omega_{-},\omega_{+})$, $a>0$ and initial condition $\psi(0) \in S$, where $S$ is given in \eqref{c}. 
	\begin{itemize}
		\item[i)] If $s_0$ is a critical point of $\tau(s)$. Then $\tau^{2}(s_0)\neq 1$ and $a^2\tau^{2}(s_0)\neq 1$. If $\tau(s_0)>0$, then $s_0$ is a local minimum (resp. maximum) point of $\tau(s)$ if, and only if, $\tau(s_0)>1$ (resp. $0<\tau(s_0)<1$). If $\tau(s_0)<0$, then $s_0$ is a local minimum (resp. maximum) point of $\tau(s)$ if, and only if, $\tau(s_0)<-1$ (resp. $-1<\tau(s_0)<0$).
		\item[ii)] The function $\tau(s)$ has at the most a finite number of critical points.		\end{itemize}
\end{lemma}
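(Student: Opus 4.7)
My approach for part (i) is to exploit the critical-point equation $\tau'(s_0)=0$, which gives $\alpha(s_0)=-a\tau(s_0)\eta(s_0)$, together with the first integral $-\alpha^2+\tau^2+\eta^2=1$ valid on $S$. Substituting the former into the latter yields the key algebraic identity
\begin{equation*}
\eta^2(s_0)\bigl(1-a^2\tau^2(s_0)\bigr)=1-\tau^2(s_0),
\end{equation*}
while differentiating $\tau'=a\tau\eta+\alpha$ exactly as in equation \eqref{t} of Lemma \ref{l2} gives $\tau''(s_0)=\tau(s_0)\bigl(1-a^2\tau^2(s_0)\bigr)$.

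To rule out $\tau^2(s_0)=1$, note that the identity forces $\eta^2(s_0)(1-a^2)=0$. If $a=1$, the initial data at $s_0$ matches (up to an $s$-translation) that of one of the affine trivial solutions produced in Lemma \ref{l1}(ii) and Proposition \ref{pex}, and ODE uniqueness then forces $\psi$ to coincide with it, contradicting non-triviality. If $a\neq 1$ one has $\eta(s_0)=\alpha(s_0)=0$; the configuration $\psi(s_0)=(0,\pm 1,0)$ must then be ruled out by combining Lemmas \ref{ld2}--\ref{ld4} with the saddle-dynamics analysis at $p=(0,0,1)$ to show that no non-trivial orbit on $S$ passes through $(0,\pm 1,0)$. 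The inequality $a^2\tau^2(s_0)\neq 1$ follows directly, since otherwise the identity would force $\tau^2(s_0)=1$, already excluded. With both inequalities in hand, the identity yields $\eta(s_0)\neq 0$ and $\operatorname{sign}\bigl(1-a^2\tau^2(s_0)\bigr)=\operatorname{sign}\bigl(1-\tau^2(s_0)\bigr)$, so a four-way case analysis on $\tau(s_0)\gtrless 0$ and $\tau^2(s_0)\gtrless 1$ reads off the local min/max classification directly from the sign of $\tau''(s_0)$.

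For part (ii), isolation of critical points is immediate from part (i), since $\tau''(s_0)\neq 0$ at each of them. To establish finiteness, I plan to show that $\tau$ is eventually monotone as $s\to\omega_\pm$: combining the monotonicity $\eta'(s)=-a\tau^2(s)\leq 0$, the first integral $\tau^2+\eta^2=1+\alpha^2$, and the asymptotic behavior of $\alpha$ (by Lemma \ref{ld4}, $|\alpha(s)|\to\infty$ at each endpoint whenever $\psi(0)\notin W^s(p)\cup W^u(p)$), the orbit should eventually be trapped in a region where $\tau'$ keeps a single sign. The main obstacles I anticipate are the $a\neq 1$ subcase of the $\tau^2=1$ exclusion in part (i), which hinges on the fine saddle-point dynamics at $(0,0,1)$, and making the eventual-monotonicity step quantitative enough to preclude accumulation of extrema at $\omega_\pm$.
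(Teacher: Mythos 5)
Your part (i) follows the paper's route exactly: the critical-point relation $\alpha(s_0)=-a\tau(s_0)\eta(s_0)$, the first integral on $S$, and the second-derivative formula \eqref{t}. You are also right that the delicate point is $\tau^{2}(s_0)=1$ with $a\neq 1$ — but the step you defer there cannot be carried out. If $a\neq 1$, your identity forces $\eta(s_0)=\alpha(s_0)=0$, and $(0,\pm 1,0)$ is a genuine point of $S$: the unique orbit through it is non-trivial for $a\neq1$ (the only trivial orbits in $S$ are the fixed points $(0,0,\pm1)$ and, when $a=1$, the lines of Lemma \ref{l1}), it satisfies $\tau'=0$ and $\tau''=\pm(1-a^{2})\neq 0$ there, so it carries an honest non-degenerate critical point with $\tau^{2}(s_0)=1$. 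No saddle-dynamics argument at $(0,0,1)$ will exclude it. (The paper's own proof passes from $a^{2}\eta^{2}(s_0)=\eta^{2}(s_0)$ to $a=1$ without the case $\eta(s_0)=0$, so it has the same hole; the clean fix is to add the proviso $\eta(s_0)\neq0$, or to accept this single exceptional critical value — being non-degenerate it does not damage part (ii) or Lemma \ref{coro1}.) A second caution: when you actually read off signs from $\tau''(s_0)=\tau(s_0)\bigl(1-a^{2}\tau^{2}(s_0)\bigr)$ together with $\eta^{2}(s_0)\bigl(1-a^{2}\tau^{2}(s_0)\bigr)=1-\tau^{2}(s_0)$, you get for $\tau(s_0)>0$ that $\tau(s_0)>1$ gives $\tau''<0$ (a local \emph{maximum}) and $0<\tau(s_0)<1$ gives a local \emph{minimum} — the opposite of what the statement prints for positive $\tau$; the symmetry $(\alpha,\tau,\eta)\mapsto(-\alpha,-\tau,\eta)$ of \eqref{s1} shows the printed table cannot be right for both signs, so state your own conclusion explicitly rather than forcing it to match.

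Part (ii) is a plan, not a proof. Non-degeneracy only makes critical points isolated; it does not prevent infinitely many of them on $\mathbb{R}$, and "the orbit is eventually trapped where $\tau'$ keeps a single sign" is precisely the assertion that must be established. The paper's mechanism is concrete and you need something equivalent: past the last sign change of $\tau$ (which exists because $\alpha$ has at most one critical point, Lemma \ref{ld2}), and once $\alpha$ and $\eta$ have fixed signs, two consecutive local maxima of $\tau$ would force two crossings $b<d$ of the level $\tau=\pm1$ with $\tau'(b)<0<\tau'(d)$; at such crossings $\alpha^{2}=\eta^{2}$, and the two inequalities for $\tau'=a\tau\eta+\alpha$ then force $a>1$ and $a<1$ simultaneously. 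Combined with the easy cases ($\eta>0$ throughout, where $\tau'$ never vanishes, and $\psi(0)\in W^{s}(p)\cup W^{u}(p)$, where $\tau\to0$ and the classification in (i) caps the number of extrema), this bounds the count on each half-line. Without this, or an equally quantitative substitute, the finiteness claim is unsupported.
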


\begin{proof} {\it i)} 
	Let $s_0$ be a critical point of $\tau(s)$. If $\tau^{2}(s_0)=1$, it follows from $-\alpha^{2}(s)+\tau^{2}(s)+\eta^{2}(s)=1$ that $\alpha^{2}(s_0)=\eta^{2}(s_0)$. Thus, $0=\tau^{\prime}(s_0)=\pm a\eta(s_0)+\alpha(s_0)$ i.e. $a=1$. Hence, it follows from Lemma \ref{l1} that the solution $\psi(s)$ of \eqref{s1} with initial condition $\psi(s_0)$ is a trivial solution, which contradicts the hypothesis. Therefore, $\tau(s_0)\neq 1$. If $a^2\tau^{2}(s_0)= 1$, then $0=\tau^{\prime}(s_0)=\pm\eta(s_0)+\alpha(s_0)$ and from  $-\alpha^{2}(s)+\tau^{2}(s)+\eta^{2}(s)=1$ we have that $\tau(s_0)=1$, which also contradicts the hypothesis. Hence, $\tau^{2}(s_0)\neq 1$ and $a^2\tau^{2}(s_0)\neq 1$. Moreover, taking the second derivative of $\tau(s)$ at $s=s_0$, we obtain \eqref{t}.  This concludes the proof of the item {\it i)}.
	
	{\it ii)}  Note that, it follows from Lemma \ref{ld2} that there exists always  $\overline{s}$ such that $\tau(s)$ does not change sign on each interval $(\omega_{-},\overline{s})$ and $(\overline{s},\omega_{+})$. We will prove for the interval $(\overline{s},\omega_{+})$, since similar arguments can be used for the  interval $(\omega_{-},\overline{s})$. If $\psi(0) \in W^{s}(p)$ i.e. $\displaystyle \lim_{s \to +\infty}\tau(s)=0$, then it follows from item {\it i)} that $\tau(s)$ has at  most a finite number of critical points on $(\overline{s},\omega_{+})$. If $\psi(0) \in S\setminus W^{s}(p)$, then it follows from Lemma \ref{ld4} that $\displaystyle \lim_{s \to \omega_{+}}|\alpha(s)|=+\infty$. 
	
	Assume that $\tau(s)>0$ for all $s \in (\overline{s},\omega_{+})$, then there exists $s_1$ such that $\alpha(s)>0$ for all $s>s_1$. If the function $\eta(s)$, which  is monotone, is always positive, then $\tau^{\prime}(s)>0$  for all $s>s_1$, i.e. $\tau$ has no critical 
on $	(s_1,\omega_+)$.  Now, consider $s_1$ such that  $\eta(s)<0$ for all $s>s_1$.  Assume by contradiction that there are $s_2,s_3 \in (s_1,\omega_{+})$, two local maximum points of $\tau(s)$. From item {\it i)},  we obtain that there are $b,d \in (s_2,s_3)$ such that $b<d$, $\tau(b)=\tau(d)=1$, $\tau^{\prime}(b)<0$, $\tau^{\prime}(d)>0$. It follows from 
	$-\alpha^{2}(s)+\tau^{2}(s)+\eta^{2}(s)=1$ that $\eta^{2}(b)=\alpha^{2}(b)$ and $\eta^{2}(d)=\alpha^{2}(d)$. Thus, from $\tau^{\prime}(b)<0$ we obtain $a\eta(b)<-\alpha(b)<0$ i.e. $a>1$ and from $\tau^{\prime}(d)>0$ we conclude $\alpha(d)>-a\eta(d)>0$ i.e. $a<1$, this is a contradiction.  Therefore, $\tau$ has at most one local maximum point on the  interval $(s_1,\omega_{+})$. 
	
	Analogously, assume that $\tau(s)<0$ for all $s \in (\overline{s},\omega_{+})$, then there exists $s_1$ such that $\alpha(s)<0$ for all $s>s_1$. If the monotone function  $\eta(s)$ is always positive, then $\tau^{\prime}(s)<0$ for all $s>s_1$, and hence $\tau$ has no critical point on $(s_1,\omega_+)$. Now, we consider  $s_1$ such that  $\eta(s)<0$ for all $s>s_1$.  Assume by contradiction that there are $s_2,s_3 \in (s_1,\omega_{+})$, two local maximum points of $\tau(s)$. From item {\it i)}  we obtain that there are $b,d \in (s_2,s_3)$ such that $b<d$, $\tau(b)=\tau(d)=-1$, $\tau^{\prime}(b)<0$, $\tau^{\prime}(d)>0$. It follows from $-\alpha^{2}(s)+\tau^{2}(s)+\eta^{2}(s)=1$ that $\eta^{2}(b)=\alpha^{2}(b)$ and $\eta^{2}(d)=\alpha^{2}(d)$. Thus, from $\tau^{\prime}(b)<0$ we obtain that $\alpha(b)<a\eta(b)<0$ i.e $a<1$ and from $\tau^{\prime}(d)>0$ we conclude that $0>\alpha(d)>a\eta(d)$ i.e. $a>1$, this is a contradiction. Therefore, $\tau$ has at most one local maximum point on the  interval $(s_1,\omega_{+})$.
	
Similar  arguments for the interval $(\omega_{-},\overline{s})$ imply that 	
	 $\tau(s)$ has at  most a finite number of critical points on $I$.
\end{proof}

The following lemma shows that  the function $\tau(s)$ is bounded and hence the curvature of the soliton $X(s)$ on $\mathbb{H}^{2}$ is bounded.

\begin{lemma}\label{ld6}
	Let $\psi(s)=(\alpha(s),\tau(s),\eta(s))$ be a non trivial solution of \eqref{s1} defined on the maximal interval $I=(\omega_{-},\omega_{+})$, $a>0$ with initial condition $\psi(0) \in S$, where $S$ is given by \eqref{c}. Then the function $\tau(s)$ is bounded on $I$.
\end{lemma}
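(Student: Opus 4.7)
The plan is to argue by contradiction. Suppose $\tau$ were unbounded on $I$. By Lemma \ref{ld2a}(ii), $\tau$ has only finitely many critical points, so it is eventually monotone near each endpoint of $I$, and unboundedness therefore forces $\tau(s)\to\pm\infty$ at one of the endpoints $\omega_\pm$. I will argue in detail the representative case $\tau(s)\to+\infty$ as $s\to\omega_+$; the remaining three cases (the other sign, and the endpoint $\omega_-$) are handled by entirely analogous arguments. From the constraint $-\alpha^2+\tau^2+\eta^2=1$ we get $\alpha^2=\tau^2+\eta^2-1$, so $|\alpha(s)|\to+\infty$; combined with $\alpha'(s)=\tau(s)>0$ for $s$ close to $\omega_+$, this yields $\alpha(s)\to+\infty$.

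The key intermediate step is to show $\eta(s)\to-\infty$ as $s\to\omega_+$. Since $\eta'=-a\tau^2\le 0$, the function $\eta$ is decreasing and has a limit $\eta_\infty\in\mathbb{R}\cup\{-\infty\}$. Assume for contradiction that $\eta_\infty$ is finite; then
\[
\int_{s_0}^{\omega_+} a\tau^2(u)\,du \;=\; \eta(s_0)-\eta_\infty \;<\; +\infty .
\]
If $\omega_+=+\infty$, this contradicts $\tau\to+\infty$ (on any tail where $\tau>M$ the integral diverges), so $\omega_+<\infty$. But then the Cauchy--Schwarz inequality gives, uniformly in $s<\omega_+$,
\[
\int_{s_0}^{s}\tau(u)\,du \;\le\; (s-s_0)^{1/2}\Big(\int_{s_0}^{\omega_+}\tau^2(u)\,du\Big)^{1/2} \;<\; +\infty ,
\]
so $\alpha(s)=\alpha(s_0)+\int_{s_0}^{s}\tau(u)\,du$ remains bounded, contradicting $\alpha(s)\to+\infty$. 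Hence $\eta(s)\to-\infty$.

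Since $\tau$ is monotone increasing on a left neighborhood of $\omega_+$, $\tau'(s)=a\tau\eta+\alpha\ge 0$ there, so $\alpha\ge -a\tau\eta=a\tau|\eta|>0$ eventually. Squaring and substituting $\alpha^2=\tau^2+\eta^2-1$ yields
\[
a^2\tau^2\eta^2 \;\le\; \tau^2+\eta^2-1 ,
\]
and dividing by $\tau^2\eta^2$ and letting $s\to\omega_+$ gives $a^2\le 0$, contradicting $a>0$. I expect the main subtlety to be the dichotomy in the proof that $\eta(s)\to-\infty$, which forces a split into $\omega_+$ finite and $\omega_+$ infinite and relies on the Cauchy--Schwarz estimate relating $\int\tau$ to $\int\tau^2$; once that is in hand, the inequality produced by $\tau'\ge 0$ closes the argument cleanly.
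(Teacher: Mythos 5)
Your argument is correct, but it closes the contradiction by a different mechanism than the paper. The paper's proof first splits according to whether $\psi(0)$ lies in $W^{s}(p)$ or $W^{u}(p)$ (where $\tau\to 0$ at the relevant end), invokes Lemma \ref{ld4} to get $|\alpha|\to\infty$ on the complement, and then derives the contradiction from the integral comparison $2\int|\tau|\,ds<\int a\tau^{2}\,ds=|\Delta\eta|$, which traps the positive quantity $\alpha-\eta$ (or $-\alpha-\eta$) below a bound tending to $-\infty$. You instead bypass the stable/unstable decomposition and Lemma \ref{ld4} entirely: once $\tau\to\pm\infty$ at an endpoint (which both proofs extract from the finiteness of critical points in Lemma \ref{ld2a}), you read off $|\alpha|\to\infty$ directly from the quadric constraint, establish $\eta\to\mp\infty$ via the identity $\int a\tau^{2}=-\Delta\eta$ together with a Cauchy--Schwarz estimate in the finite-endpoint subcase, and then conclude with the algebraic inequality $a^{2}\tau^{2}\eta^{2}\le\alpha^{2}=\tau^{2}+\eta^{2}-1$ coming from the sign of $\tau'$ at the blow-up end, which forces $a^{2}\le 0$. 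That last step is essentially the same device the paper uses in the Claim of Lemma \ref{l7} for initial conditions in $H\cup C$, so in effect you have transported that argument to the case $\psi(0)\in S$; what it buys is a more self-contained proof (no appeal to Lemmas \ref{ld4} or \ref{et} and no case split over $W^{u}(p)$, $W^{s}(p)$), at the cost of the extra intermediate step proving $\eta\to\mp\infty$, which the paper's route avoids. The only cosmetic point worth fixing is that in the Cauchy--Schwarz display you should bound $\bigl|\int_{s_0}^{s}\tau\,du\bigr|$ by $\int_{s_0}^{s}|\tau|\,du$ (or choose $s_0$ in the tail where $\tau$ has a fixed sign) before applying the inequality; this does not affect the validity of the argument.
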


\begin{proof}
	It follows from Lemma \ref{ld2} that there exists $\overline{s} \in I$ such that $\alpha(s)$ is monotone on the intervals $(\omega_{-},\overline{s})$ and $(\overline{s}, \omega_{+})$. Moreover, from Proposition \ref{p2} we have that $\eta(s)$ is monotone.´
		
	If $\psi(0) \in  W^{s}(p)$, where $p$ is a singular point, then $\displaystyle \lim_{s \to +\infty}\tau(s)=0$ and $\tau(s)$ is bounded on $(\overline{s},+\infty)$ for any $\overline{s} \in I$ fixed. Similarly, if $\psi(0) \in W^{u}(p)$, then $\displaystyle \lim_{s \to -\infty}\tau(s)=0$ and $\tau(s)$ is bounded on  $(-\infty,\overline{s})$, $\overline{s} \in I$ fixed. 
	
We will now consider the cases when  the initial condition $\psi(0)$ belongs to $S\setminus W^{u}(p)$ or $S\setminus W^{s}(p)$.
	If $\psi(0) \in S\setminus W^{u}(p)$  assume by contradiction that $\tau(s)$ is unbounded on $(\omega_{-},\overline{s})$, then it follows from Lemma \ref{ld2a} that there exists $s_1 \in (\omega_{-},\overline{s})$ such that $|\tau(s)|>1$ and $a|\tau(s)|>2$ for all $s \in (\omega_{-},s_1)$. Thus, $|\alpha(s)|>|\eta(s)|$, because $\alpha^{2}(s)-\eta^{2}(s)=\tau^{2}(s)-1>0$ and $a\tau^{2}(s)>2|\tau(s)|$ for all $s \in (\omega_{-},s_1)$. From Lemma \ref{ld2} we have that $\tau(s)$ does not sign on $(\omega_{-},s_1)$.
	
	If $\alpha^{\prime}(s)=\tau(s)<0$ on $ (\omega_{-},s_1)$, then $\alpha(s)$ is strictly decreasing on this interval. Thus, it follows from Lemma \ref{ld4}, that $\displaystyle \lim_{s \to \omega_{-}}\alpha(s)=+\infty$. Hence, $s_1$ can be chosen so that $\alpha(s)$ is decreasing and positive for all $s<s_1$. Therefore, using \eqref{s1} and the fact that $a\tau^{2}(s)>2|\tau(s)|$, we obtain
	\begin{equation*}
	2\alpha(s)-2\alpha(s_1)=-2\int^{s_1}_{s}\tau(s)ds<\int^{s_1}_{s}a\tau^{2}(s)ds=\eta(s)-\eta(s_1),
	\end{equation*}
	i.e.,
	\begin{equation*} 
	0<\alpha(s)-\eta(s)< 2\alpha(s_1)-\eta(s_1)-\alpha(s),
	\end{equation*}
	which contradicts  Lemma \ref{ld4}, because $\displaystyle \lim_{s \to \omega_{-}}\alpha(s)=+\infty$. Hence, $\tau(s)$ is bounded  on $(\omega_{-},s_1)$. 	
	
	If $\alpha^{\prime}(s)=\tau(s)>0$ on $ (\omega_{-},s_1)$, then $\alpha(s)$ is strictly increasing on this interval. Thus, it follows from Lemma \ref{ld4} that $\displaystyle \lim_{s \to \omega_{-}}\alpha(s)=-\infty$. Hence, $s_1$ can be chosen so that $\alpha(s)$ is increasing and negative for all $s<s_1$. Therefore, using \eqref{s1} and the fact that $a\tau^{2}(s)>2|\tau(s)|$, we obtain	
	\begin{equation*}
	2\alpha(s_1)-2\alpha(s)=2\int^{s_{1}}_{s}\tau(s)ds<\int^{s_{1}}_{s}a\tau^{2}(s)ds=-\eta(s_{1})+\eta(s),
	\end{equation*}
	i.e. ,
	\begin{equation*}
	0<-\alpha(s)-\eta(s)< -2\alpha(s_1)-\eta(s_{1})+\alpha(s),
	\end{equation*}
	which contradicts Lemma \ref{ld4}, because $ \displaystyle \lim_{s \to \omega_{-}}\alpha(s)=-\infty$. Hence, $\tau(s)$ is bounded on $(\omega_{-},\overline{s})$. 
	
	When $\psi(0) \in S\setminus W^{s}(p)$, the similar arguments show  that $\tau(s)$ is bounded on $(\overline{s},\omega_{+})$.  
	
	Therefore, the function $\tau(s)$ is bounded on $I$.  
\end{proof}

Our next lemma  provides the behavior of the function $\eta(s)$.

\begin{lemma}\label{et} 
	Let $\psi(s)=(\alpha(s),\tau(s),\eta(s))$ be a non trivial solution of \eqref{s1} defined on the maximal interval $I=(\omega_{-},\omega_{+})$, $a>0$ and initial condition $\psi(0) \in S$, where $S$ is given by \eqref{c}. Consider $W^{u}(p)$ and $W^{s}(p)$ given by \eqref{w}. If $\psi(0) \in S\setminus W^{u}(p)$ (resp. $S\setminus W^{s}(p)$), then $\displaystyle \lim_{s \to \omega_{-}}\eta(s)=+\infty$ (resp. $\displaystyle \lim_{s \to \omega_{+}}\eta(s)=-\infty$). 
\end{lemma}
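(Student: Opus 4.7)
The plan is to read the three facts we need straight off the previous lemmas: the conservation law on $S$, the unboundedness of $|\alpha|$ at the relevant end, and the boundedness of $\tau$ throughout. Since $\psi(0)\in S$, Proposition \ref{p1} gives $-\alpha^{2}(s)+\tau^{2}(s)+\eta^{2}(s)=1$ for all $s\in I$, hence
\begin{equation*}
\eta^{2}(s)=1+\alpha^{2}(s)-\tau^{2}(s).
\end{equation*}
By Lemma \ref{ld6}, the function $\tau$ is bounded on $I$, say $|\tau(s)|\le M$. So if $\psi(0)\in S\setminus W^{u}(p)$, Lemma \ref{ld4} gives $\lim_{s\to\omega_{-}}|\alpha(s)|=+\infty$, and the displayed identity forces $\lim_{s\to\omega_{-}}\eta^{2}(s)=+\infty$, i.e. $|\eta(s)|\to+\infty$ as $s\to\omega_{-}$.

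To pin down the sign, I would invoke monotonicity: Proposition \ref{p1} says $\eta'(s)=-a\tau^{2}(s)\le 0$, so $\eta$ is non-increasing on $I$. A non-increasing function whose absolute value tends to infinity as $s\to\omega_{-}$ cannot tend to $-\infty$ there (that would require $\eta$ to take arbitrarily large negative values at small $s$ and then increase, contradicting monotonicity). Therefore $\lim_{s\to\omega_{-}}\eta(s)=+\infty$.

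The case $\psi(0)\in S\setminus W^{s}(p)$ is entirely symmetric: Lemma \ref{ld4} provides $\lim_{s\to\omega_{+}}|\alpha(s)|=+\infty$, the same conservation identity plus boundedness of $\tau$ yields $|\eta(s)|\to+\infty$ as $s\to\omega_{+}$, and the decreasing nature of $\eta$ forces the limit to be $-\infty$ rather than $+\infty$. There is no real obstacle here; the lemma is essentially a bookkeeping consequence of Lemmas \ref{ld4} and \ref{ld6} together with the conservation law and the sign of $\eta'$, and the only point that requires a moment's thought is the sign determination from monotonicity, which I have addressed above.
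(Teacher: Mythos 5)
Your proof is correct and follows exactly the route the paper takes: the paper's own proof is the one-line remark that the lemma ``follows from Lemmas \ref{ld4} and \ref{ld6} and the fact that $\tau^2(s)+\eta^2(s)=\alpha^2(s)+1$,'' which is precisely your combination of the conservation law, the unboundedness of $|\alpha|$ at the relevant end, and the boundedness of $\tau$. Your explicit sign determination via the monotonicity $\eta'=-a\tau^2\le 0$ is the step the paper leaves implicit, and you have handled it correctly.
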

\begin{proof}
	The proof follows from  Lemmas \ref{ld4} and \ref{ld6} and the fact that $\tau^2(s)+\eta^2(s)=\alpha^2(s)+1$.    
\end{proof}

\begin{lemma}\label{ld7}
	Let $\psi(s)=(\alpha(s),\tau(s),\eta(s))$ be a non trivial solution of \eqref{s1} defined on the maximal interval $I=(\omega_{-},\omega_{+})$, $a>0$ and initial condition $\psi(0) \in S$, where $S$ is given by \eqref{c}. Then $I=\R$. 
\end{lemma}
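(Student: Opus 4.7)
The strategy mirrors the proof of Lemma \ref{lb8}, leveraging the boundedness result just established in Lemma \ref{ld6} to rule out finite-time blow-up via standard ODE escape-lemma reasoning.

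First, I would invoke Lemma \ref{ld6} to fix a constant $M>0$ with $|\tau(s)|\le M$ for all $s\in I$. The first equation of the system \eqref{s1} then gives, for any $s_0\in I$ and $s\in I$,
\begin{equation*}
|\alpha(s)-\alpha(s_0)|=\left|\int_{s_0}^{s}\tau(u)\,du\right|\le M|s-s_0|,
\end{equation*}
so $\alpha$ grows at most linearly on $I$. Similarly, the third equation of \eqref{s1} yields
\begin{equation*}
|\eta(s)-\eta(s_0)|=\left|\int_{s_0}^{s}a\tau^{2}(u)\,du\right|\le aM^{2}|s-s_0|,
\end{equation*}
so $\eta$ also grows at most linearly. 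Consequently, on every bounded subinterval $[s_0-L,s_0+L]\cap I$ the solution $\psi(s)=(\alpha(s),\tau(s),\eta(s))$ remains inside a compact subset of $\mathbb{R}^{3}$.

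The vector field $\Phi$ driving \eqref{s1} is polynomial and hence smooth on all of $\mathbb{R}^{3}$. By the standard escape-time theorem for ODEs, if the right endpoint $\omega_{+}$ of the maximal interval were finite, then $\psi(s)$ would have to leave every compact subset of $\mathbb{R}^{3}$ as $s\to\omega_{+}$. The linear-growth bounds above contradict this: on the finite interval $[0,\omega_{+})$ the trajectory $\psi(s)$ stays in the compact set
\begin{equation*}
\{(\alpha,\tau,\eta)\in\mathbb{R}^{3}:|\alpha|\le|\alpha(0)|+M\omega_{+},\ |\tau|\le M,\ |\eta|\le|\eta(0)|+aM^{2}\omega_{+}\}.
\end{equation*}
Hence $\omega_{+}=+\infty$, and the symmetric argument yields $\omega_{-}=-\infty$, so $I=\mathbb{R}$.

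The only delicate point is that everything hinges on the boundedness of $\tau$ on the \emph{whole} maximal interval $I$, not just on compact subsets; but this is exactly the content of Lemma \ref{ld6}, so no further work is needed. In particular, no case analysis by position of $\psi(0)$ in $S$, in $W^{u}(p)$, or in $W^{s}(p)$ is required here, since Lemma \ref{ld6} already treats all cases uniformly.
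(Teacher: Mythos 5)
Your proof is correct, and it reaches the conclusion by a route that differs from the paper's in its final step. Both arguments rest on the same key input, Lemma \ref{ld6}, and both derive the linear-growth estimate $|\alpha(s)-\alpha(s_0)|\le M|s-s_0|$ from the first equation of \eqref{s1}. The paper then argues by cases: for $\psi(0)\in S\setminus W^{u}(p)$ it invokes Lemma \ref{ld4} to get that $\alpha$ is unbounded as $s\to\omega_{-}$, which together with the linear bound forces $\omega_{-}=-\infty$, and for $\psi(0)\in W^{u}(p)$ it appeals to the definition of $W^{u}(p)$ (and symmetrically for $\omega_{+}$). You instead bound $\eta$ linearly as well (via the third equation; one could equally use $\eta^{2}=1+\alpha^{2}-\tau^{2}$) and apply the standard escape-time theorem for the polynomial vector field on $\mathbb{R}^{3}$: a solution confined to a compact set on a bounded interval cannot have a finite maximal time. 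This is cleaner in two respects: it needs no case split according to whether $\psi(0)$ lies in $W^{u}(p)$ or $W^{s}(p)$, and it avoids the slightly circular-looking appeal to the definition of the stable and unstable sets (which already presuppose the solution exists for all backward, respectively forward, time). The paper's version, on the other hand, reuses Lemma \ref{ld4}, which it has already established for other purposes, so neither approach requires extra machinery. Both are valid.
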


\begin{proof}
	It follows from Lemma \ref{ld6} that $\tau(s)$ is bounded on the interval $I$. Let $M>0$ be such that $|\tau(s)|\leq M$ for all $s \in I$. Using \eqref{s1}, we obtain that
	\begin{equation}\label{e24}
	|\alpha(s)-\alpha(s_{0})|=\left|\int_{s_{0}}^{s}\tau(s)ds\right|\leq M|s-s_{0}|
	\end{equation}
	for all $s \in I$. We will now show that $\omega_{-}=-\infty$ and $\omega_{+}=+\infty$.
	
	If $\psi(0) \in S\setminus W^{u}(p)$, then from Lemma \ref{ld4} we have that $\alpha(s)$ is unbounded on $(\omega_{-},\overline{s})$ for any $\overline{s} \in I$ fixed. Hence, it follows from \eqref{e24} that $\omega_{-}=-\infty$. It follows from the definition of $W^{u}(p)$ that $\omega_{-}=-\infty$ when $\psi(0) \in W^{u}(p)$. Since $\displaystyle S= W^{u}(p)\cup \left[S\setminus W^{u}(p)\right]$, we conclude that $\omega_{-}=-\infty$.
	
	If $\psi(0) \in S\setminus W^{s}(p)$, then from Lemma \ref{ld4} we have that $\alpha(s)$ is unbounded on $(\overline{s},\omega_{+})$ for any $\overline{s} \in I$ fixed. Hence, it follows from \eqref{e24} that $\omega_{+}=+\infty$. It follows from the definition of $W^{s}(p)$ that $\omega_{+}=+\infty$ when $\psi(0) \in W^{s}(p)$. Since $\displaystyle S= W^{s}(p)\cup \left[S\setminus W^{s}(p)\right]$, then $\omega_{+}=+\infty$.
		
	Therefore, $I=\R$.
\end{proof}

\begin{lemma} \label{coro1}
	Let $\psi(s)=(\alpha(s),\tau(s),\eta(s))$ be a non trivial solution of \eqref{s1}, with $a>0$ and initial condition $\psi(0) \in H\cup C \cup S$, where $H$, $C$ and $S$ are given by \eqref{c}. Then $\psi(s)$ and the corresponding soliton solution $X(s)$ to the CSF on  $\mathbb{H}^{2}$ are defined for all  $s\in \R$. Moreover, at each end the curvature $k(s)$ of $X$ converges to one of the following constants $\{-1,0,1\}$.  
\end{lemma}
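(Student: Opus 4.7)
The plan is to combine the earlier lemmas in three steps. First, I would establish that $I=\R$. Second, I would show that $\tau(s)$ admits a finite limit $L_{\pm}$ at each end. Third, I would pin down $L_{\pm}\in\{0,\pm 1/a\}$, so that $k(s)=a\tau(s)$ tends to an element of $\{-1,0,1\}$.

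Step one is immediate: Lemma \ref{lb8} gives $I=\R$ when $\psi(0)\in H\cup C$, and Lemma \ref{ld7} handles $\psi(0)\in S$. Proposition \ref{p2} then furnishes a soliton $X:\R\to\mathbb{H}^{2}$ on the whole real line. For step two, I would combine the boundedness of $\tau$ (Lemmas \ref{l3}, \ref{l7} and \ref{ld6}) with the fact that $\tau$ has only finitely many critical points --- at most one in Lemma \ref{l3}, exactly two in the case covered by Lemma \ref{l7}, and finitely many in Lemma \ref{ld2a}. Thus $\tau$ is eventually monotone near $\pm\infty$, and being bounded it converges to finite limits $L_{\pm}$.

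The heart of the argument is step three. From \eqref{s1} one computes
\begin{equation*}
(\alpha+\eta)^{\prime}=\tau(1-a\tau), \qquad (\alpha-\eta)^{\prime}=\tau(1+a\tau).
\end{equation*}
Suppose, toward contradiction, that $L_{+}$ lies outside $\{0,\pm 1/a\}$. Then both $L_{+}(1-aL_{+})$ and $L_{+}(1+aL_{+})$ are nonzero, so for $s$ large enough $|(\alpha\pm\eta)^{\prime}(s)|$ is bounded below by a positive constant of definite sign. Integration gives linear growth for each of $\alpha+\eta$ and $\alpha-\eta$ in absolute value, hence their product $(\alpha+\eta)(\alpha-\eta)$ grows quadratically in $|s|$. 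On the other hand, the first integral \eqref{eq4} yields
\begin{equation*}
(\alpha+\eta)(\alpha-\eta)=\alpha^{2}-\eta^{2}=\tau^{2}-\delta,
\end{equation*}
where $\delta\in\{-1,0,1\}$, which is bounded since $\tau$ is bounded. This contradiction forces $L_{+}\in\{0,\pm 1/a\}$. The same reasoning at $s\to-\infty$ yields $L_{-}\in\{0,\pm 1/a\}$. Since $k=a\tau$, at each end $k$ converges to an element of $\{-1,0,1\}$.

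The main obstacle is verifying that $\tau$ is genuinely eventually monotone at each end, which requires a careful case split across the sets $H$, $C$, and $S$ (and within $S$, whether the initial condition lies on the stable or unstable manifolds $W^{s}(p)$, $W^{u}(p)$, where $\tau\to 0$ is automatic); once this is in hand, the product argument in step three dispatches every remaining case uniformly.
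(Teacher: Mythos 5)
Your proof is correct, and while Steps~1 and~2 coincide with the paper's argument (same reduction to Lemmas \ref{lb8}, \ref{ld7} for $I=\R$, and the same ``bounded plus finitely many critical points, hence eventually monotone'' reasoning for the existence of the limits of $\tau$), your Step~3 takes a genuinely different route. The paper first disposes of the cases $\psi(0)\in W^{u}(p)$ or $W^{s}(p)$ (where $\tau\to 0$ automatically), and otherwise observes that $\lim\tau\neq 0$ forces $|\alpha|\to\infty$, deduces $\eta^{2}/\alpha^{2}\to 1$ from the conserved quantity $-\alpha^{2}+\tau^{2}+\eta^{2}=\delta$, and then applies L'Hospital's rule to get $\lim(-\eta/\alpha)=\lim a\tau^{2}/\tau=\lim a\tau$, whence $\lim k=\pm 1$. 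You instead exploit the same first integral multiplicatively: the identity $(\alpha+\eta)(\alpha-\eta)=\tau^{2}-\delta$ is bounded, while the derivatives $(\alpha\pm\eta)^{\prime}=\tau(1\mp a\tau)$ would both be bounded away from zero if $L_{+}\notin\{0,\pm 1/a\}$, forcing quadratic growth of the product --- a contradiction. Your version is slightly more self-contained: it does not invoke L'Hospital, nor the lemmas establishing $\eta\to\pm\infty$ and $|\alpha|\to\infty$ at the ends (Lemmas \ref{l5}, \ref{l6}, \ref{ld4}, \ref{et}), and it treats all three sets $H$, $C$, $S$ uniformly; amusingly, the auxiliary functions $\alpha\pm\eta$ you use are exactly those the paper introduces inside the proof of Lemma \ref{l7} for a different purpose. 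The paper's route is shorter given that those limit lemmas are already proved and needed elsewhere. One cosmetic point: the conserved quantity you call ``the first integral \eqref{eq4}'' is really the invariance statement in Proposition \ref{p1} (equation \eqref{eq4} is only the condition at $s=0$); this does not affect the argument.
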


\begin{proof}
	Since $X(s)$ is a soliton solution to the CSF corresponding to $\psi(s)$,  then $k(s)=a\tau(s)$. Thus,  Lemmas \ref{l3}, \ref{l7}, \ref{ld2} and \ref{ld6} imply  that $k(s)$ is bounded on $\R$ and it has at  most a finite number of critical points. Thus, the limits  $\displaystyle \lim_{s \to \pm\infty}k(s)=\displaystyle \lim_{s \to \pm\infty}a\tau(s)$ exist. In particular, when $\psi(0)\in W^u(p)$ then $\displaystyle \lim_{s \to -\infty} \tau(s)=0$. Similarly, when $\psi(0)\in W^s(p)$ then $\displaystyle \lim_{s \to +\infty} \tau(s)=0$. In these cases, the curvature function converges to zero at $-\infty$ and 
	$+\infty$, respectively.
	 
 If $\displaystyle \lim_{s \to \pm\infty} \tau(s)\neq 0$, then $\displaystyle \lim_{s \to \pm\infty} |\alpha(s)|=+\infty$ and it follows from $-\alpha^{2}(s)+\tau^{2}(s)+\eta^{2}(s)=\delta$, where $\delta \in \{-1,0,1\}$ that $$\displaystyle \lim_{s \to \pm\infty} \frac{\eta^{2}(s)}{\alpha^{2}(s)}=\displaystyle \lim_{s \to \pm\infty} \left(\frac{-\tau^{2}(s)+\delta}{\alpha^{2}(s)}+1\right)=1.$$
	Using  \eqref{s1},  Lemmas \ref{l5}, \ref{l6}, \ref{ld4}, \ref{et} and  L'Hospital rule, we obtain $$\displaystyle \lim_{s \to \pm\infty} -\frac{\eta(s)}{\alpha(s)}=\lim_{s \to \pm\infty}a\tau(s)=\lim_{s \to \pm\infty}k(s).$$
	Therefore, $\displaystyle \lim_{s \to -\infty}k(s)=\pm 1$ and $\displaystyle \lim_{s \to +\infty}k(s)=\pm1$.
\end{proof}

\vspace {.2in}

Finally, we will prove our main theorem.

\vspace{.2in}

\noindent {\it Proof of Theorem \ref{c2t2}}. For any vector $v\in \R^3_1\setminus \{0\}$,   without loss generality we can consider $v=ae$, where $a>0$ and 
		\begin{eqnarray*}
		e=	\left\{\begin{array}{clll}
			(-1,0,0) & \text{if} & v & \text{is a timelike vector},\\
			(-1,1,0) & \text{if} & v &  \text{is a lightlike vector },\\
			(0,0,1) & \text{if} & v &  \text{is a spacelike vector.}
		\end{array} \right.
	\end{eqnarray*}	 
	Let $\psi(s)=(\alpha(s),\tau(s),\eta(s))$ be a solution of \eqref{s1} defined on the maximal interval $I=(\omega_{-},\omega_{+})$, $a>0$ and initial condition $\psi(0) \in \R^{3}$ satisfying  		 
	\begin{eqnarray*}
		-\alpha^{2}(0)+\tau^{2}(0)+\eta^{2}(0)=	\left\{
		\begin{array}{clll}
			-1 & \text{if} & v & \text{is a timelike vector},\\
			0 & \text{if} & v &   \text{is a lightlike vector},\\
			1 & \text{if} & v &  \text{is a spacelike vector,}
		\end{array} \right.
	\end{eqnarray*}
i.e., $\psi(0) \in H\cup C \cup S$, where $H$, $C$ and $S$ are the disjoint sets given by \eqref{c}.  
Moreover,  it follows from Proposition \ref{p2} that there is a soliton solution $X(s)$ to the CSF, with curvature $k(s)=a\tau(s)$, such that the relations 
 $$\alpha(s)=\spn{X(s),e},\hspace{0.3 cm} \tau(s)=\spn{T(s),e}\hspace{0.3 cm}\text{and} \hspace{0.3 cm} \eta(s)=\spn{N(s),e}, $$
 are satisfied, where $T$ and $N$ are the unit vector fields tangent and normal to $X$.

	Thus, the initial conditions of \eqref{s1},  which are given by  two constants,  determine the soliton solution in each case. Therefore, for each fixed vector $v \in \R_{1}^{3}\setminus \{0\}$ there is a 2-parameter family of non trivial soliton solutions to the CSF in $\mathbb{H}^{2}$. 
	Moreover, 	it follows from Lemmas \ref{l1}, \ref{lb8} and \ref{ld7} that each  soliton solution is defined for all $s \in \R$, i.e. $I=\R$ and Lemma \ref{coro1} shows that  the curvature at each end converges to one of the following constants $\{-1, 0, 1\}$.  
	
	Note that, from Lemmas \ref{l1}, \ref{l2} and \ref{ld2} we know  that there exists $\overline{s} \in \R$ such that $\alpha(s)$ is strictly monotone on the  intervals $(-\infty,\overline{s})$ and $(\overline{s}, +\infty)$. Since $\alpha(s)$ describes the Euclidean height of $X(s)$ with respect to a fixed plane, then $X(s)$ does not have self-intersections  in each one of the intervals$(-\infty,\overline{s})$ and $(\overline{s}, +\infty)$. Therefore,  $X(s)$ is embedded if $\alpha(s)$ is monotone in $\R$.
	
	If $\alpha(s)$ is not monotone in $\R$ then $\alpha(s)$ has only one critical point.  Suppose that $X(s)$ has some self-intersection and consider $\Sigma$ the simple region bounded by $X([s_1,s_2])$ with $X(s_1)=X(s_2)$, $s_1<\overline{s}<s_2$ and $\theta$ the external angle between the tangent vectors $T(s_1)$ and $T(s_2)$, which is at the most $\pi$. By Gauss-Bonnet's theorem, we obtain
	
	\begin{eqnarray*}
		0<2\pi\chi(\Sigma)-\theta&=&\int_{\Sigma}\kappa d\sigma+\int_{X([s_1,s_2])}k(s)ds\\
		&=&-\int_{\Sigma} d\sigma +\int_{X([s_1,s_2])}a\tau(s)ds\\
		&=&-\int_{\Sigma} d\sigma +a[\alpha(s_2)-\alpha(s_1)]\\
		&=&-\int_{\Sigma} d\sigma<0.
	\end{eqnarray*} 
	This is a contradiction. Hence, the soliton solution $X(s)$ to the CSF in $\mathbb{H}^{2}$ does not admit self-intersections. Note that, $X(s)$ is already  embedded on the intervals $(-\infty,\overline{s})$ and $(\overline{s}, +\infty)$ and from Lemmas \ref{l5} and \ref{ld4} we have that the two ends of the curve are unbounded. Therefore, $X(s)$ is an embedded curve. \hfill $\square$

\section{Visualizing some Soliton Solutions to the CSF on $\mathbb{H}^{2}$}	
In this setion, we visualize some examples of soliton solutions to the CSF on the hyperbolic space. In order to do so, we use the following parametrization.
for $\mathbb{H}^{2}$ 
\begin{equation*}
\chi(u,w)=(\sqrt{1+u^2+w^2},u,w)
\end{equation*}
If a curve of $\mathbb{H}^{2}$ $X(s)=\chi(u(s),w(s))$ is parametrized by arc lentgh, then  
\begin{equation*}
T(s)=\left(\frac{u(s)u^{\prime}(s)+w(s)w^{\prime}(s)}{\sqrt{1+u^{2}(s)+w^{2}(s)}},u^{\prime}(s),w^{\prime}(s)\right)
\end{equation*}
the functions 
$u(s)$ and  $w(s)$ satisfy the following system of ODEs 
\begin{equation}\label{ex1}
\left\{\begin{array}{ll}
(u^{\prime})^{2}+(w^{\prime})^{2}+\left(u^{\prime}w-uw^{\prime}\right)^{2}=1+u^2+w^2,\\		   
w^{\prime \prime}u^{\prime}-u^{\prime \prime}w^{\prime} +uw^{\prime}-u^{\prime}w=k(s)\sqrt{1+u^{2}+w^{2}},
\end{array}\right.		
\end{equation}
where $k(s)$ is the curvature  of $X(s)$. The first equation follows from the fact that 
the curve is parametrized by arc lentgh and the second one from the expression of the curvature of $X$.

In Theorem  \ref{c2t1}, we saw that the curvature of a soliton solution to the CSF on  $\mathbb{H}^{2}$ is determined by its tangent vector field and a non zero fixed vector $v$.
We use  (\ref{ex1}) and  the {\it software  Maple} to plot examples of such solitons. 
In each example, we visualize the curve on the three models of the 2-dimensional hyperbolic space, namely the hyperboloid, the Poincar\'e disk and  the upper half space.   

In Figure \ref{fig5} a), the blue curve on the hyperboloid provides the visualization of a soliton  solution $X(s)$ to the CSF on $\mathbb{H}^{2}$ whose curvature is given by  $\displaystyle k(s)=\spn{T(s),(-1,0,0)}$ and $a=1$. The red curve  is the Euclidean orthogonal projection of $X(s)$ on the plane that contains the origin and it is orthogonal to the vector $(-1,0,0)$. In Figures \ref{fig5} b) and  c) we visualize the same soliton on the Poincar\'e disk and on the half space model respectively. 

\begin{figure}[!htb]
	\centering
	\subfloat[]{
		\includegraphics[height=3 cm]{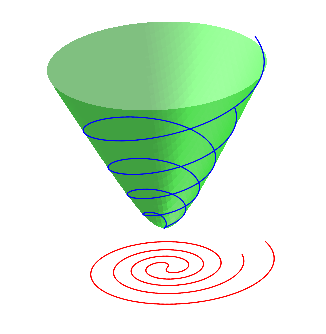}
		%\label{fig5a}
	}
	\quad %espaco separador
	\subfloat[]{
		\includegraphics[height=3 cm]{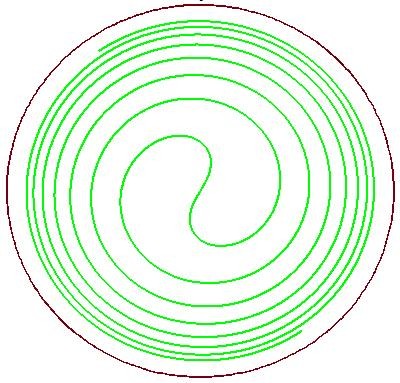}
		%\label{C}
	}
	\quad %espaco separador
	\subfloat[]{
		\includegraphics[height= 3 cm]{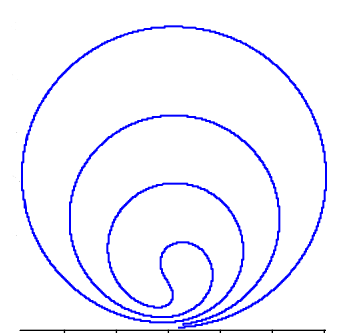}
		%\label{S}
	}
	\caption{Soliton solution to the CSF on  $\mathbb{H}^{2}$ with fixed vector $v=(-1,0,0)$ and  $a=1$.}
	\label{fig5}
\end{figure}
In Figure \ref{fig6} a), the blue curve provides the visualization  
of a soliton  solution $X(s)$ to the CSF on $\mathbb{H}^{2}$ whose curvature is given by  $\displaystyle k(s)=\spn{T(s),(-1,1,0)}$ and $a=1$.  In Figures \ref{fig6} b) and  c) we visualize the same soliton on the Poincar\'e disk and on the half space model respectively. 

\begin{figure}[!htb]
	\centering
	\subfloat[]{
		\includegraphics[height=3 cm]{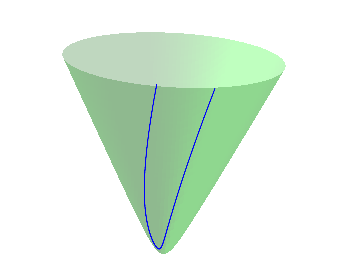}
		%\label{H}
	}
	\quad %espaco separador
	\subfloat[]{
		\includegraphics[height=3 cm]{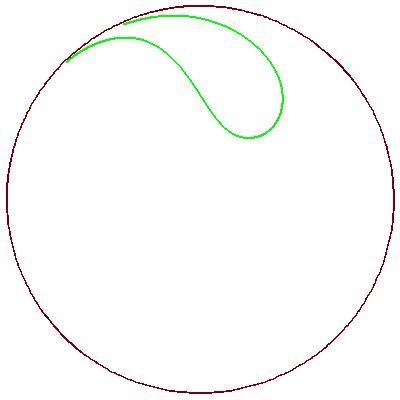}
		%\label{C}
	}
	\quad %espaco separador
	\subfloat[]{
		\includegraphics[height= 3 cm]{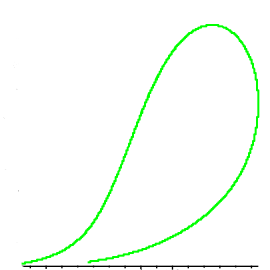}
		%\label{S}
	}
	\caption{Sóliton do fluxo FRC em $\mathbb{H}^{2}$ com vetor fixado $(-1,1,0)$ e $a=1$.}
	\label{fig6}
\end{figure}

%\begin{figure}[h!]
%	\centering
%	\includegraphics[height=1.6in]{fig/hiperbolico01.png}
%	\caption{Sóliton do fluxo FRC em $\mathbb{H}^{2}$ com vetor fixado $(-1,1,0)$ e $a=1$.}
%	\label{fig6}	
%\end{figure}
%\begin{figure}[h!]
%	\centering
%	\includegraphics[height=1.6in]{fig/b.jpg}
%	\caption{Sóliton do fluxo FRC em $\mathbb{H}^{2}$ no modelo do disco de Poincaré.}
%	\label{fig6a}
%\end{figure}
%\begin{figure}[h!]
%	\centering
%	\includegraphics[height=1.6in]{fig/b1.png}
%	\caption{Sóliton do fluxo FRC em $\mathbb{H}^{2}$ no modelo do semiplano.}
%	\label{fig6b}	
%\end{figure}

Finally, in Figure  \ref{fig7} a), the blue curve provides the visualization  
of a soliton  solution $X(s)$ to the CSF on $\mathbb{H}^{2}$ whose curvature is given by  $\displaystyle k(s)=\spn{T(s),(0,0,1)}$ and $a=1$.  In Figures \ref{fig6} b) and  c) we visualize the same soliton on the Poincar\'e disk and on the half space model respectively. 
We point out that this soliton has non constant curvature and hence it is different from 
the one given in Proposition \ref{pex}. In fact, in order to obtain 
Figure \ref{fig7}, we used initial condition 
 $u(0)=w(0)=0$ and $\displaystyle u^{\prime}(0)=-w^{\prime}(0)=-\,\frac{1}{\sqrt{2}}$ for the system  \eqref{ex1}, i.e., $\displaystyle T(0,0)=\left(0,-\frac{1}{\sqrt{2}},\frac{1}{\sqrt{2}}\right)$ and  $ \displaystyle \tau(0)=\frac{1}{\sqrt{2}}\neq \pm1$. 
Hence the curvature is not constant. 
In Figures \ref{fig7} b) and  c) we visualize the soliton given in Figure \ref{fig7} a)
on the Poincar\'e disk and on the half space model respectively. 

\begin{figure}[htb!]
	\centering
	\subfloat[]{
		\includegraphics[height=3 cm]{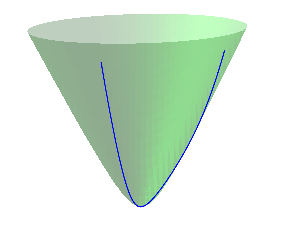}
		%\label{H}
	}
	\quad %espaco separador
	\subfloat[]{
		\includegraphics[height=3 cm]{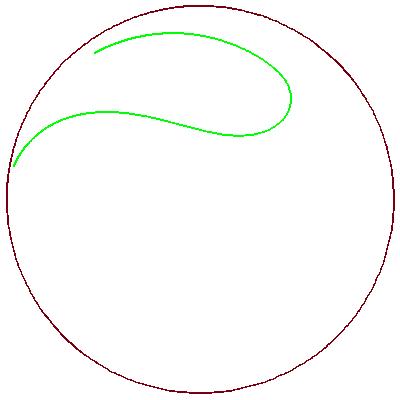}
		%\label{C}
	}
	\quad %espaco separador
	\subfloat[]{
		\includegraphics[height= 3 cm]{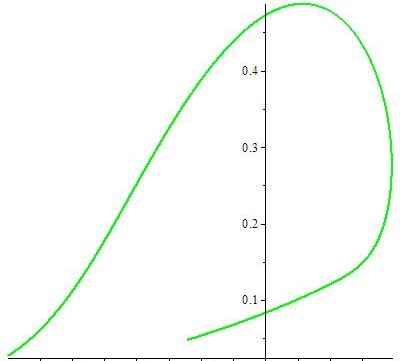}
		%\label{S}
	}
	\caption{Sóliton do fluxo FRC em $\mathbb{H}^{2}$ com vetor fixado $(0,0,1)$ e $a=1$.}
	\label{fig7}
\end{figure}

%\begin{figure}[h]
%	\centering
%	\includegraphics[height=1.5in]{fig/hiperbolico11.png}
%	\caption{Sóliton do fluxo FRC em $\mathbb{H}^{2}$ com vetor fixado $(0,0,1)$ e $a=1$.}
%	\label{fig7}	
%\end{figure}
%\begin{figure}[htp]
%	\centering
%	\includegraphics[height=1.4in]{fig/c.jpg}
%	\caption{Sóliton do fluxo FRC em $\mathbb{H}^{2}$ no modelo do disco de Poincaré.}
%	\label{fig7a}	
%\end{figure}
%\begin{figure}[htp!]
%	\centering
%	\includegraphics[height=1.6in]{fig/c1.jpg}
%	\caption{Sóliton do fluxo FRC em $\mathbb{H}^{2}$ no modelo do semiplano.}
%	\label{fig7b}	
%\end{figure}

\bf Aknowlegment: \rm The first author aknowledges the support given by the Universidade 
Federal do Oeste da Bahia during his graduate program at the Universidade de Bras\'\i lia, when this research was undertaken.

\end{document}